\documentclass[a4paper,fleqn,final]{cas-sc}

\ExplSyntaxOn
\cs_gset:Npn \__first_footerline:
  { \group_begin: \small \sffamily \__short_authors: \group_end: }
\ExplSyntaxOff

\usepackage[numbers]{natbib}

\def\tsc#1{\csdef{#1}{\textsc{\lowercase{#1}}\xspace}}
\tsc{WGM}
\tsc{QE}

\usepackage{amsmath,amssymb,mathtools,amsthm}
\usepackage{xcolor,graphicx,float,enumitem, marginnote}
\usepackage{comment}

\usepackage{bm}
\usepackage{placeins}

\usepackage{tikz-cd}
\usetikzlibrary{cd}

\usepackage{tikz}
\usetikzlibrary{matrix}
\tikzset
{
  over/.style={preaction={draw=white,-,line width=6pt}},
}

\graphicspath{{figures/}}

\newcommand{\ee}{\varepsilon}

\newcommand{\R}{{\mathbb R}}
\newcommand{\Rd}{{\R^d}}

\DeclareMathOperator{\diver}{div}

\DeclareMathOperator{\supp}{supp}

\newcommand*\diff{\mathop{}\!\mathrm{d}}

\newtheorem{theorem}{Theorem}[section]
\newtheorem{proposition}[theorem]{Proposition}
\newtheorem{corollary}[theorem]{Corollary}
\newtheorem{lemma}[theorem]{Lemma}

\theoremstyle{definition}
\newtheorem{definition}[theorem]{Definition}
\newtheorem{remark}[theorem]{Remark}
\newtheorem*{remark*}{Remark}
\newtheorem*{remarks*}{Remarks}

\numberwithin{equation}{section}

\newlist{enumeratetheorem}{enumerate}{2}
\setlist[enumeratetheorem,1]{
	label=\rm \roman*), 
	ref=Theorem \thetheorem--Item \roman*,
	topsep=0ex
	}
\setlist[enumeratetheorem,2]{label=\rm \alph*), ref=\theenumeratetheoremi.\alph*}

\newlist{enumeratedefinition}{enumerate}{2}
\setlist[enumeratedefinition,1]{
	label=\rm \roman*), 
	ref=Definition \thedefinition--Item \roman*,
	topsep=0ex}
\setlist[enumeratedefinition,2]{label=\rm \alph*), ref=\theenumeratedefinitioni.\alph*}

\newlist{enumeratesteps}{enumerate}{2}
\setlist[enumeratesteps,1]{
  label=\textit{Step \arabic*\protect\thisstep},
  ref=\arabic*,
  align=left, leftmargin=0pt, labelindent=!,
  listparindent=\parindent, labelwidth=0pt, itemindent=!
}
\setlist[enumeratesteps,2]{
    label= \textit{Step \theenumeratestepsi.\alph*\protect\thisstep},
    ref=\theenumeratestepsi.\alph*,
    align=left, leftmargin=0pt, labelindent=\parindent,
  listparindent=\parindent, labelwidth=0pt, itemindent=!
}
\newcommand{\step}[1][]{%
  \if\relax\detokenize{#1}\relax
    \def\thisstep{.}%
  \else
    \def\thisstep{: \protect #1.}%
  \fi
  \item}

\newcommand{\dx}{\, dx}
\newcommand{\dt}{\, dt}
\newcommand{\ds}{\, ds}

\newcommand{\dsigma}{\, d\sigma}

\newcommand{\mob}{\mathrm{m}}

\newcommand{\InvPhiPrimpp}{\varpi_{1}}
\newcommand{\InvPhiPrimnp}{\varpi_{2}}

\newcommand{\goodSemigroup}{free-energy dissipating semigroup}
\newcommand{\timeLimit}{time-limit operator}

\makeatletter
\renewcommand*{\@fnsymbol}[1]{\ensuremath{\ifcase#1\or \star \or \dagger\or \ddagger\or
		\mathsection\or \mathparagraph\or \|\or **\or \dagger\dagger
		\or \ddagger\ddagger \else\@ctrerr\fi}}
\makeatother

\usepackage{todonotes}
\ifdefined\DeclareUnicodeCharacter
  \DeclareUnicodeCharacter{2212}{-}
\fi

\ExplSyntaxOn
\ior_new:N \g__mainxviii_legacy_ior
\iow_new:N \g__mainxviii_legacy_iow
\bool_new:N \g__mainxviii_legacy_bool
\cs_new_protected:Npn \__mainxviii_clear_legacy:nn #1#2
  {
    \bool_gset_false:N \g__mainxviii_legacy_bool
    \file_if_exist:nT {#1}
      {
        \ior_open:Nn \g__mainxviii_legacy_ior {#1}
        \ior_str_map_inline:Nn \g__mainxviii_legacy_ior
          {
            \str_if_in:nnT {##1} {#2}
              {
                \bool_gset_true:N \g__mainxviii_legacy_bool
                \ior_map_break:
              }
          }
        \ior_close:N \g__mainxviii_legacy_ior
        \bool_if:NT \g__mainxviii_legacy_bool
          {
            \iow_open:Nn \g__mainxviii_legacy_iow {#1}
            \iow_now:Nn \g__mainxviii_legacy_iow {\relax}
            \iow_close:N \g__mainxviii_legacy_iow
            \iow_term:x {main18:~Cleared~obsolete~bibliography~data~from~#1.}
          }
      }
  }
\exp_args:Nx \__mainxviii_clear_legacy:nn {\c_sys_jobname_str.aux} {abx@aux@}
\exp_args:Nx \__mainxviii_clear_legacy:nn {\c_sys_jobname_str.bbl} {ver@biblatex.sty}
\ExplSyntaxOff

\begin{document}

\let\WriteBookmarks\relax
\def\floatpagepagefraction{1}
\def\textpagefraction{.001}

\shorttitle{Drift-diffusion equations with saturation on \texorpdfstring{$\mathbb{R}^d$}{R-d}}    

\shortauthors{Alejandro Fernández-Jiménez}  

\title [mode = title]{Drift-diffusion equations with saturation on \texorpdfstring{$\mathbb{R}^d$}{R-d}}  



%
 \author[1]{Alejandro Fernández-Jiménez}[type=editor,
      style=chinese,
      auid=000,
      bioid=1,
      orcid=0000-0002-4548-4374]




\ead{a.fernandezjimenez@vu.nl}



\affiliation[1]{organization={Amsterdam Center for Dynamics and Computation, Vrije Universiteit Amsterdam},
            addressline={De Boelelaan 1111}, 
            city={Amsterdam},
            postcode={1081 HV}, 
            state={North Holland},
            country={The Netherlands}}










\begin{abstract}
%
%
We study a class of nonlinear drift--diffusion equations on $\Rd$ with
saturation-type mobilities, not necessarily concave.
Under structural assumptions on the mobility and potentials,
bounded-domain approximations and local Caccioppoli estimates
yield weak solutions for all positive times and an
$L^1$-contraction estimate.
The same estimates provide local space--time compactness of
time translates, whose subsequential limits are weak solutions.
Under additional confinement assumptions, uniform mass-tail
estimates yield a mass-preserving, free-energy dissipating
$C_0$-semigroup of $L^1(\Rd)$-contractions.
In this confining setting, every trajectory converges strongly
in $L^1(\Rd)$ to a stationary state with the same mass, and the
resulting time-limit operator is an $L^1$-contraction.
Under the same assumptions, we also identify the unique global
and $L^1$-local minimiser of the free energy at each fixed mass
and prove that it is stationary under the semigroup.
\end{abstract}


\begin{keywords}
Saturation \sep $C_0$-semigroup \sep Long-time behaviour \sep Euler-Lagrange condition \sep Caccioppoli estimates 
\end{keywords}

\maketitle

\setcounter{tocdepth}{2}

\section{Introduction}

The aim of this manuscript is to study the equation
\begingroup
\renewcommand{\theHequation}{P}
\begin{equation}\label{eq:saturation}\tag{P}
    \frac{\partial \rho}{\partial t} = \diver \left( \mob (\rho ) \nabla (U'(\rho) + V ) \right) \quad \text{in } (0, \infty ) \times \Rd,
\end{equation}
\endgroup
with $U$ convex, $V$ a sufficiently regular potential and $\mob (\rho)$ a non-linear mobility of \textit{saturation}-type.
In the previous work \cite{Carrillo_FJ_Gomez-Castro25}, Carrillo,
Gómez-Castro, and the present author studied this problem on a bounded domain
with the natural no-flux condition. Here we consider the Cauchy problem on
$\Rd$.

Aggregation-diffusion equations with saturation-type mobility appear in mathematical biology to model chemotaxis with prevention of overcrowding \cite{Chalub_Rodrigues06, Hillen_Painter01, Carrillo_Murakawa_Sato_Togashi_Trush19}. This family of equations also arises in mathematical physics in descriptions of thin liquid films \cite{Lisini_Matthes_Savare12, Matthes_McCann_Savare09}, phase segregation \cite{Slepvcev08, Zamponi_Jungel17}, and the relaxation of a gas of fermions \cite{Kaniadakis95, Kaniadakis_Quarati93}, among other applications. In \cite{Carrillo_FJ_Gomez-Castro25}, the authors provide a complete theory for drift-diffusion equations with saturation-type mobility on bounded domains. Before that work, the literature concerning \eqref{eq:saturation} was comparatively limited and it focused mostly on the case of concave mobilities or dimension $1$. In \cite{Burger_DiFrancesco_Yasmin06, diFrancesco_Rosado08}, the authors study a Keller--Segel model with prevention of overcrowding for $\mob(\rho)=\rho(1-\rho)$. Subsequently, \cite{DiFrancesco_Fagioli_Radici19} considers a family of one-dimensional nonlocal interaction equations with saturation and derives a rigorous limit from discrete distributions. This result is extended in \cite{Fagioli_Radici18} to a family of one-dimensional aggregation-diffusion equations. Both works treat mobilities of the form $\mob(\rho)=\rho\phi(\rho)$, where $\phi$ is decreasing and positive only on a finite interval. In \cite{Fagioli_Tse22}, the authors establish well-posedness of entropy solutions for a family of one-dimensional nonlocal transport equations with a general saturation-type mobility.
More recently, \cite{Alamri25} studies the $C^\alpha$ regularity of the saturation problem \eqref{eq:saturation} when $\mob (\rho) = \rho \phi (\rho)$ for $\phi$ continuous. 

From the numerical point of view, \cite{BCH23} introduces an implicit finite-volume scheme for \eqref{eq:saturation} with an aggregation term and mobility $\mob(\rho)=\rho\phi(\rho)$, where $\phi$ is non-increasing and compactly supported. The convergence of this scheme is studied further in \cite{GomezCastro25}. For a more detailed introduction to saturation-type problems, we refer to \cite{BCH23,Carrillo_FJ_Gomez-Castro25}.

Problem \eqref{eq:saturation} can be seen as a formal Wasserstein-type gradient flow (see \cite{CarrilloLisiniSavareSlepcev2010, DolbeaultNazaretSavare2009, Santambrogio15, Carrillo_FJ_Gomez-Castro25}) in the sense 
\begin{equation*}
    \frac{\partial \rho}{\partial t} = \diver \left( \mob (\rho) \nabla \frac{\delta \mathcal{F}}{\delta \rho} [\rho] \right)
\end{equation*}
for the free-energy functional
\begin{equation}\label{eq:Energy}
    \mathcal{F}[\rho]
    = \int_{\Rd}U(\rho(x))\,\dx
      + \int_{\Rd}V(x)\rho(x)\,\dx .
\end{equation}
Wasserstein-type gradient-flow equations such as \eqref{eq:saturation} have been studied extensively. The case of linear mobility $\mob(\rho)=\rho$ is by now well understood in the context of aggregation-drift-diffusion equations; see, for instance, \cite{Carrillo_McCann_Villani03,Carrillo_Hittmeir_Volzone_Yao19, Carrillo_Esposito_Falco_FJ24, Elbar_FJ_Santambrogio25} and the reviews \cite{CCY19,GomezCastro23}. More recent results cover a wider class of mobilities, including power-law mobilities $\mob(\rho)=\rho^a$ with Newtonian interaction potentials \cite{Carrillo_Gomez-Castro_Vazquez22a,Carrillo_Gomez-Castro_Vazquez22b,decourcel_Elbar25}, and porous-medium equations with nonlocal pressure \cite{Stan_delTeso_Vazquez16,Stan_delTeso_Vazquez18,Stan_delTeso_Vazquez19}.

This manuscript focuses on a nonlinear mobility of \textit{saturation}-type, i.e., the support of the mobility is a finite interval. The main focus and contribution of the project is to understand how to expand the theory on \textit{saturation}-type problems to the case of $\Rd$. Since a gradient flow theory is still not available for the problem \eqref{eq:saturation} with non-concave mobilities, the difficulty lies in the fact that we do this purely with compactness arguments without relying on the compactness of the domain because we are taking it to be $\Rd$. 

\paragraph{Motivation and main contributions.}
The study of \eqref{eq:saturation} on $\Rd$ is natural both from the modelling and analytical points of view. In applications describing populations or particles subject to congestion, the bounded-domain formulation introduces an artificial impermeable boundary, whereas the Cauchy problem allows the confinement mechanism to be determined entirely by the drift potential $V$. The whole-space setting is also the natural framework for studying the competition between nonlinear diffusion, saturation and confinement, as well as the spatial localization and long-time concentration of solutions.

The passage from a bounded domain to $\Rd$ is not a purely technical extension. Compactness is no longer automatic, the free energy may fail to be finite unless its density is normalized at the vacuum, and both mass and energy may escape to infinity along approximating sequences. Moreover, the estimates available for the regularised problems are generally only local in space. The principal analytical difficulty is therefore to combine local Caccioppoli estimates with coercivity and mass-tail estimates, so that the compactness obtained on bounded sets can be upgraded to global $L^1(\Rd)$ convergence while preserving mass, free-energy dissipation and the semigroup structure. This strategy has been applied successfully for other cases, see for instance \cite{Bertozzi_Slepcev10,  Bedrossian_Rodriguez_Bertozzi11, Carrillo_FJ_Gomez-Castro24}. However, to our knowledge, this strategy has never been applied to the case of non-linear mobility.

Under suitable coercivity and confinement assumptions, we construct a free-energy dissipating $C_0$-semigroup of $L^1(\Rd)$-contractions for \eqref{eq:saturation}. 
Taking advantage of the bounded-domain contraction theory of \cite{Carrillo_FJ_Gomez-Castro25} we construct a solution on a bounded ball $B_R$. Afterwards, we take the limit in the radius $R$ through a combination of local Caccioppoli estimates with coercivity and mass-tail bounds. We also prove that every trajectory converges strongly in \(L^1(\mathbb R^d)\) to a stationary state with the same mass, and that the resulting time-limit operator is an \(L^1\)-contraction. Finally, we also identify the unique fixed-mass (global and local) minimiser of the free energy and prove that it is stationary under the semigroup. These results provide a whole-space framework for the further study of generalized Wasserstein formulations \cite{DolbeaultNazaretSavare2009, CarrilloLisiniSavareSlepcev2010}, minimizing-movement schemes \cite{Jordan_Kinderlehrer_Otto98} and uniqueness questions via entropy solutions \cite{Carrillo99} for saturation-type drift-diffusion equations.

\paragraph{Structure of the manuscript.} 
Section \ref{sec:Main results} introduces the assumptions and main results. In Section \ref{sec:Existence} we focus on existence of weak solutions and we prove Theorem \ref{thm:existence saturation}. In Section \ref{sec:omega limit} we study the $\omega$-limit of \eqref{eq:saturation} and we prove Theorem \ref{thm:Asymptotic (P0)}. Finally, in Section \ref{sec:Numerical experiments} we provide some numerical experiments relevant to the content of the manuscript.

\section{Assumptions and main results}\label{sec:Main results}
In this section we state the assumptions and the main results. The local
structural hypotheses are the following.

\begin{enumerate}[label=(H$_{\arabic*}$)]\label{hyp:H}
    \item\label{hyp:mobility} There is some $\alpha \in (0, \infty)$ such that $\mob(0) = \mob(\alpha) = 0$, and $\mob > 0$ in $(0,\alpha)$.
          We assume that
          $\mob \in W^{1,1}(0,\alpha)\cap C([0,\alpha])
          \cap C^1( 0,\alpha )$.
    \item\label{hyp:admissible data} We deal with initial data $
              \rho_0$ in the admissible class of densities $\mathcal A \coloneqq \{ \rho \in L^1(\Rd) : 0 \le \rho \le \alpha \}.$
    \item          \label{hyp:H_V}          $V$ is of class $C^2 (\Rd)$ and bounded from below.
          Without loss of generality we assume $V \geq 0$.
    \item          \label{hyp:H_U}          $U \in W^{1,1} ((0,\alpha)) \cap C^3 ((0, \alpha ))$, $U(0)=0$
          and convex $U'' \geq 0$.
          We assume that $U$ is not trivial, i.e., 
          \begin{equation}
              \label{eq:s0}
              \begin{aligned}
                   & \text{there exists } s_0 \in (0,\alpha ) \text{ such that } U'' (s_0) >0.
              \end{aligned}
          \end{equation}
    \item
        \label{hyp:Phi}
        We assume that the diffusion is continuous, in the sense that $\mob U''  \in L^1(0,\alpha)$  
        and we define
        \[
            \Phi(s) \coloneqq \int_{s_0}^s \mob(\tau) U''(\tau) \diff \tau.
        \]
        Furthermore, we also assume that $\Phi$ is strictly increasing at $0$ and $\alpha$, i.e., $\Phi(0) < \Phi (s) < \Phi (\alpha)$ for all $s \in (0, \alpha)$.
        Lastly, we impose a technical regularity condition which will be suitable for compactness estimates. For $s_0$ defined in \eqref{eq:s0} we assume
        \begin{align}
            \label{eq:hypothesis Phi/Phi' ee = 0}
            \sup_{s \in [s_0,\alpha] }\left|\frac{(\Phi (\alpha)-\Phi (s))\Phi(s)}{\Phi'(s)} \right| + \sup_{s \in [0,s_0] }
            \left|\frac{(\Phi (s)-\Phi (0))\Phi(s)}{\Phi'(s)} \right| & < \infty.
        \end{align}
        In addition, we assume that the one-sided limits
        \[
            \lim_{s\downarrow0}\Phi'(s), \qquad \lim_{s\uparrow\alpha}\Phi'(s), \qquad \text{exist in } [0,+\infty].
        \]
\end{enumerate}
Assumptions \ref{hyp:mobility}--\ref{hyp:Phi} are local in space. They
contain the hypotheses used for the bounded-domain theory in
\cite{Carrillo_FJ_Gomez-Castro25} and we will always take them into account throughout the manuscript. 
We have included an additional $W^{1,1}$-regularity in
assumption \ref{hyp:mobility}. This extra regularity is needed in order to construct a variation-preserving regularisation for an expanding-ball limit.   
\begin{remark}
    Let us notice that assumption \ref{hyp:mobility} is very broad and includes standard well-studied cases such as $\mob (s) = s (1-s)$ but also less studied cases, for instance when $\mob$ is not concave, for instance $\mob(s) = s^a (\alpha - s)^b$ for any $a,b > 0$. To our knowledge, this would be the first result for drift-diffusion equations on $\Rd$ with a general non-concave mobility. 
\end{remark}

\begin{remark}\label{rem:U strict convex}
Assumptions \hyperref[hyp:H]{(H)} already imply strict convexity for $U$, i.e.
\begingroup
\renewcommand{\theHequation}{SCU}
\begin{equation*}
                            U''(s)  >0 , \qquad \text{for } s \in (0,\alpha) .
\end{equation*}
\endgroup 
Indeed, $\Phi'=\mob U''\geq0$, $\Phi(s_0)=0$, and
$\Phi'(s_0)>0$ imply that
\[
    \Phi(s)<0 \quad\text{for }0<s<s_0,
    \qquad
    \Phi(s)>0 \quad\text{for }s_0<s<\alpha.
\]
Due to \ref{hyp:Phi}, for $0<s<s_0$, we have
$(\Phi(s)-\Phi(0))(-\Phi(s))>0$, whereas for
$s_0<s<\alpha$, we have
$(\Phi(\alpha)-\Phi(s))\Phi(s)>0$.
The quotient bounds in assumption \ref{hyp:Phi}
therefore imply that $\Phi'(s)>0$ for every
$s\in(0,\alpha)\setminus\{s_0\}$. 
Since $\Phi'(s_0)>0$ as well, and
$\mob>0$ in $(0,\alpha)$, we conclude that
\[
    U''(s)=\frac{\Phi'(s)}{\mob(s)}>0,
    \qquad s\in(0,\alpha).
\]
\end{remark}


\begin{remark}
    Notice that, since $\mob \in C^1((0,\alpha))$ and $U \in C^3((0,\alpha))$, then $\Phi \in C^2((0,\alpha))$. 
    These main hypothesis \hyperref[hyp:H]{(H)} 
    are satisfied for the Porous Medium / Fast Diffusion cases. In particular, we can consider $\Phi$ in $C^2((0,\alpha))$ and such that
    \[
        \Phi'(s) = \begin{dcases}
            a(s) s^{m_1-1}             & \text{if } s \in (0,s_1),       \\
            b(s) (\alpha - s)^{m_2-1} & \text{if } s \in (s_2, \alpha),
        \end{dcases}
    \]
    for $m_1, m_2 > 0$, $a \in C([0, s_1])$, $b \in C([s_2, \alpha])$ and $0 < \underline a \le a(s) \le \overline a, 0 < \underline b \le b(s) \le \overline b$.
    With respect to assumption \ref{hyp:Phi}, if $m_1, m_2 > 0$ then $\Phi$ fulfills \ref{hyp:Phi}. 
    Furthermore, we stress that analogously to \ref{hyp:mobility} and the previous remark, the mobility $\mob$ does not need concavity in order to fulfill \ref{hyp:Phi}. One example is the case $\mob (s) = s^2 (1 - s^2)$ and $U(s) = s^m/(m-1)$ with $m > 0$, or $U(s) = s \log s$ if $m=1$.
\end{remark}

The assumptions \hyperref[hyp:H]{(H)} yield local compactness on $\Rd$, but do not by themselves
exclude loss of mass.
To upgrade the construction from
local to global compactness and recover mass tightness, we impose the following two extra assumptions on the
free energy and the confining potential:
\begin{enumerate}[label=(MT$_{\arabic*}$),ref=(MT$_{\arabic*}$)]
    \item \label{hyp:free energy bdd below}
    There exist $\vartheta\in(0,1)$ and a nonnegative function
    $g\in L^1(\Rd)$ such that
    \begin{equation}\label{eq:negative internal energy confinement}
        [U(s)]_-
        \leq\vartheta V(x)s+g(x),
        \qquad \text{for a.e. }x\in\Rd\text{ and every }0\leq s\leq\alpha.
    \end{equation}
    Here $[a]_-\coloneqq\max\{-a,0\}$ denotes the negative part.
    \item \label{hyp:growth of V}
    The drift potential $V$ is coercive:
    \begin{equation}\label{eq:coercive potential}
        \inf_{\Rd\setminus B_\sigma}V(x)\longrightarrow+\infty
        \quad\text{as }\sigma\to+\infty.
    \end{equation}
\end{enumerate}
\begin{remark}
Assumption \ref{hyp:free energy bdd below}  ensures that the negative
internal energy cannot cancel the confining effect of $V$. Indeed,
\[
    \mathcal F[\rho]
    \geq (1-\vartheta)\int_{\Rd}V\rho\,\dx-\|g\|_{L^1(\Rd)}.
\]
Thus, bounded free energy controls the confinement moment.
Assumption \ref{hyp:growth of V} makes the confinement cost per unit
mass tend to infinity at large distances. Together, these assumptions
give uniform mass tightness on free-energy sublevels. 
The role of these assumptions is discussed further in
Subsection~\ref{sec:energy bdd below}, particularly in
Lemmas~\ref{lem:uniform tightness omega}
and~\ref{lem:tails degenerate ball semigroups}.
\end{remark}

After presenting the assumptions, we can recall the main results of the manuscript. In order to do that we first introduce the notion of weak solution and \goodSemigroup{}.

\begin{definition}[Weak solution]
    \label{def:Weak solutions P0}
    We say $\rho$ is a weak solution of the problem \eqref{eq:saturation} in
    $(0,T)\times\Rd$ if $\rho\in L^1((0,T)\times\Rd)$,
    $\Phi(\rho)\in L^2(0,T;H^1_{\mathrm{loc}}(\Rd))$, and
    \begin{equation*}
        \int_{\Rd}\rho_0\varphi(0)\,\dx
        +\int_0^T\int_{\Rd}\rho\,\partial_t\varphi\,\dx\,\dt
        =\int_0^T\int_{\Rd}
        \bigl(\nabla \Phi (\rho) + \mob(\rho)\nabla V\bigr)
        \cdot\nabla\varphi\,\dx\,\dt
    \end{equation*}
    for every $\varphi\in C_c^{\infty}([0,T)\times\Rd)$. Let us recall that $\nabla \Phi (\rho) = \mob (\rho) \nabla U'(\rho)$. 
\end{definition}

In addition to $\mathcal{A}$, we consider the following sets of initial data: 
\begin{subequations}
\begin{align*}
    \mathcal A_{\mathcal F}
    & \coloneqq
    \left\{\rho\in\mathcal A:\ U(\rho)\in L^1(\Rd)
    \text{ and }V\rho\in L^1(\Rd)\right\} 
\end{align*}
the class of admissible densities with finite free energy and
\begin{align*}
    \mathcal A_M & \coloneqq \{ \rho \in \mathcal{A} \, : \, \| \rho \|_{L^1(\Rd)} = M \}
\end{align*}
the set of admissible densities with fixed mass $M$.  
\end{subequations}
We set $\mathcal F[\rho]=+\infty$ on
$\mathcal A\setminus\mathcal A_{\mathcal F}$. Under
\ref{hyp:free energy bdd below}, this convention is equivalently
\[
 \mathcal F[\rho]
 =\int_{\Rd}\bigl(U(\rho)+V\rho+g\bigr)\,\dx-\int_{\Rd}g\,\dx.
\]
Indeed, the first integrand is nonnegative and is bounded below by
$(1-\vartheta)V\rho$. If its integral is finite, then
$V\rho\in L^1(\Rd)$; convexity and $U(0)=0$ give
$[U(s)]_+\leq C_Us$, and \eqref{eq:negative internal energy confinement}
then implies $U(\rho)\in L^1(\Rd)$.

We will work with the notion of semigroup of solutions (possibly nonunique) for the problem \eqref{eq:saturation} as follows.

\begin{definition}
    \label{def:Free-energy dissipating semigroup}
    We say that $S_t : \mathcal A \to \mathcal A$ is a \goodSemigroup{}  of solutions for \eqref{eq:saturation} if
    \begin{enumeratedefinition}
        \item
        \label{item:Semi-group weak solution}
        For $\rho_0 \in \mathcal A$, $\rho_t = S_{t}\rho_0$ is a weak solution to \eqref{eq:saturation}.
        \item
        \label{it:S^eps is C0}
        $S_t$ is a $C_0$-semigroup in $L^1$, i.e., for $t, h > 0$ we have $S_{t+h} = S_t S_h$ and $\lim_{t \to 0^+}\| S_t \rho_0 - \rho_0 \|_{L^1 (\Rd)} = 0$  for all $\rho_0 \in \mathcal A$.
        \item
        \label{it:S^eps is Lp continuous}
        $S_t : \mathcal A \to \mathcal A$ is an $L^1$-contraction,
        i.e., for any $\rho_0, \eta_0 \in \mathcal A$ then
        $
            \|S_t \rho_0 - S_t \eta_0 \|_{L^1(\Rd)} \le \|\rho_0 - \eta_0\|_{L^1(\Rd)}.
        $
        \item
        \label{item:H-1 equicontinuity}
        Free-energy dissipation and $C^{\frac 1 2}_{loc}([0,\infty); H^{-1}(\Rd))$ continuity:
        If $\rho_0 \in \mathcal A_{\mathcal F}$, then, writing
        $\rho_t=S_t\rho_0$, for all $0\leq t_1<t_2$ we have
        \begin{align}
            \label{eq:free energy dissipation}
            0 \leq \int_{t_1}^{t_2} \int_\Rd \mob(\rho_\sigma) |\nabla(U'(\rho_\sigma) + V)|^2 \dx \diff \sigma & \le \mathcal F[\rho_{t_1}] - \mathcal F[\rho_{t_2}], \\
            \label{eq:H-1 continuity}
            \| \rho_{t_2} -  \rho_{t_1} \|_{H^{-1}(\Rd)}      & \leq
            C \left( \mathcal{F} [\rho_{t_1}] - \mathcal{F} [\rho_{t_2}] \right)^{\frac{1}{2}} |t_2-t_1|^{\frac{1}{2}}.
        \end{align}
        In particular, $t \mapsto \mathcal F[S_t \rho_0]$ is non-increasing.
    \end{enumeratedefinition}
\end{definition}

With the notions of weak solution and \goodSemigroup{} we can introduce the first main result.
\begin{theorem}
    [Existence for \eqref{eq:saturation}]
    \label{thm:existence saturation}
    Assume \ref{hyp:mobility}--\ref{hyp:Phi}. It follows that:
    \begin{enumeratetheorem}
        \item\label{item:local existence theorem}
        There exists a family of maps $S_t\colon\mathcal A\to\mathcal A$
        such that, for every $\rho_0\in\mathcal A$ and $T>0$, the curve
        $\rho_t=S_t\rho_0$ is a weak solution of
        \eqref{eq:saturation} and
        \begin{equation*}
            \rho\in C([0,T];H^{-1}_{\mathrm{loc}}(\Rd)),
            \qquad
            \Phi(\rho)\in L^2(0,T;H^1_{\mathrm{loc}}(\Rd)),
            \qquad 0\leq\rho\leq\alpha.
        \end{equation*}
        The solution satisfy
        \begin{equation}\label{eq:main local mass inequality}
            \|S_t\rho_0\|_{L^1(\Rd)}
            \leq\|\rho_0\|_{L^1(\Rd)}
            \qquad\text{for every }t\geq0.
        \end{equation}
        Moreover, for every $t\geq0$, the maps $S_t$ satisfy
        \begin{equation}\label{eq:main local contraction}
            \|S_t\rho_0-S_t\eta_0\|_{L^1(\Rd)}
            \leq\|\rho_0-\eta_0\|_{L^1(\Rd)},
            \qquad \rho_0,\eta_0\in\mathcal A.
        \end{equation}

        \item\label{item:global existence theorem}
        If, in addition, we assume
        \ref{hyp:free energy bdd below}--\ref{hyp:growth of V}, then
        the construction is globally compact in space and no mass escapes
        at infinity. More precisely, the family of maps $S_t$ in the first part can be
        selected so that, for every $\rho_0\in\mathcal A$ 
        \begin{equation*}
            S_{\bullet}\rho_0\in C([0,T];L^1(\Rd))
            \cap C([0,T];H^{-1}(\Rd)),
        \end{equation*}
        and
        \begin{equation}\label{eq:main mass conservation}
            \|S_t\rho_0\|_{L^1(\Rd)}
            =\|\rho_0\|_{L^1(\Rd)},
            \qquad t\geq0.
        \end{equation}
        The family $S=(S_t)_{t\geq0}$ is a \goodSemigroup{} for
        \eqref{eq:saturation}. 
    \end{enumeratetheorem}
\end{theorem}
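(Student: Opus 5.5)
The plan is to build $S_t$ as a limit of the bounded-domain semigroups of \cite{Carrillo_FJ_Gomez-Castro25}. For $R>0$, let $S^R_t$ be the free-energy dissipating $L^1(B_R)$-contraction semigroup on $B_R$ with no-flux boundary condition, applied to $\rho_0\chi_{B_R}$. If needed, I would first regularise the mobility (this is where the $W^{1,1}$ hypothesis in \ref{hyp:mobility} enters) so that the bounded-domain theory applies uniformly, and then remove the regularisation together with $R\to\infty$.

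Since the no-flux solutions conserve mass on $B_R$ and are $L^1$-contractions, extending them by zero gives $\|S^R_t\rho_0\|_{L^1(\Rd)}\le\|\rho_0\|_{L^1(\Rd)}$ and $\|S^R_t\rho_0-S^R_t\eta_0\|_{L^1}\le\|\rho_0-\eta_0\|_{L^1}$, uniformly in $R$. Both bounds are stable under $L^1_{\mathrm{loc}}$ convergence by Fatou's lemma. This is how I would obtain \eqref{eq:main local mass inequality} and \eqref{eq:main local contraction}.

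The key estimate for part (i) is a local Caccioppoli bound. Take a cutoff $\psi\in C_c^\infty(B_{2r})$ with $\psi\equiv1$ on $B_r$ and $R>2r$. Testing the equation for $\rho^R$ against $\psi^2(\Phi(\rho^R)-\Phi(s_0))$, or more simply against $\psi^2$ times a primitive adapted to \eqref{eq:hypothesis Phi/Phi' ee = 0}, should give
\[
\int_0^T\!\!\int_{B_r}|\nabla\Phi(\rho^R)|^2\,dx\,dt\le C(r,T,V,\Phi)\bigl(1+\|\rho_0\|_{L^1}\bigr).
\]
Here the drift term $\mob(\rho)\nabla V\cdot\nabla\varphi$ is absorbed using $0\le\mob\le\|\mob\|_\infty$ and $V\in C^2$. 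The weak formulation then gives a uniform bound on $\partial_t\rho^R$ in $L^2(0,T;H^{-1}(B_r))$.

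Because $\Phi$ is strictly increasing and continuous, Aubin--Lions/Simon-type compactness (for instance the time-translate estimate $\int\int_{B_r}|\rho(t+h)-\rho(t)|\,(\Phi(\rho(t+h))-\Phi(\rho(t)))\lesssim h^{1/2}$) gives $\rho^R\to\rho$ in $L^1_{\mathrm{loc}}((0,T)\times\Rd)$ along a subsequence. A diagonal argument in $r$, in $T$, and over a countable dense set of initial data, followed by extension by contraction, produces a single subsequence. Passing to the limit in the weak formulation is then direct: $\mob(\rho^R)\to\mob(\rho)$ a.e., and $\nabla\Phi(\rho^R)\rightharpoonup\nabla\Phi(\rho)$ weakly in $L^2_{\mathrm{loc}}$. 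The continuity $\rho\in C([0,T];H^{-1}_{\mathrm{loc}})$ follows from the $\partial_t$ bound.

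For part (ii), I would add \ref{hyp:free energy bdd below}--\ref{hyp:growth of V}. For $\rho_0\in\mathcal A_{\mathcal F}$, the bounded-domain dissipation gives $\mathcal F_R[\rho^R_t]\le\mathcal F[\rho_0\chi_{B_R}]\le C$, and the remark after \ref{hyp:growth of V} then yields
\[
\sup_{R,t}\int V\rho^R_t\,dx<\infty .
\]
By coercivity of $V$, this gives uniform tails: $\int_{|x|>\sigma}\rho^R_t\le C/\inf_{\Rd\setminus B_\sigma}V\to0$. So $L^1_{\mathrm{loc}}$ convergence upgrades to $L^1(\Rd)$ convergence, and mass is conserved. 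For general $\rho_0\in\mathcal A$, I would approximate by elements of $\mathcal A_{\mathcal F}$ (truncate the support) and use the uniform contraction.

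The semigroup property needs care because of the subsequence. My plan is to show that the limit is independent of the subsequence, or to define $S_t$ consistently through the diagonal subsequence. Then $S_{t+h}=S_tS_h$ passes to the limit from $S^R_{t+h}=S^R_tS^R_h$, using the $L^1$ convergence together with the contraction, which gives equicontinuity. $C_0$-continuity follows from the uniform $H^{-1}$ estimate plus tightness and the $L^\infty$ bound, via interpolation for $\mathcal A_{\mathcal F}$ data and density in general.

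The dissipation inequality \eqref{eq:free energy dissipation} would follow from lower semicontinuity. I would write the dissipation as $\int|\mob(\rho)^{-1/2}(\nabla\Phi(\rho)+\mob(\rho)\nabla V)|^2$, a jointly convex functional of $(\rho,\text{flux})$, and treat $\mathcal F$ at $t_2$ with Fatou's lemma (via $U+V\rho+g\ge0$). The value at $t_1$ requires convergence of the energy, which I would get from tightness. This comparison is first obtained for $t_1=0$; the general case uses the semigroup property. Estimate \eqref{eq:H-1 continuity} then follows from Cauchy--Schwarz in the weak formulation with $\mob\le C$.

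I expect the main obstacle to be the uniform local Caccioppoli/time-translate estimate for degenerate, non-concave $\mob$ near $0$ and $\alpha$. Choosing the right test function, so that \eqref{eq:hypothesis Phi/Phi' ee = 0} controls the boundary and cutoff terms, is the delicate point. A second difficulty is the energy convergence at the initial time needed for the dissipation inequality.
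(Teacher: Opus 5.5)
Your part (i) is essentially the paper's argument. The Caccioppoli bound comes from testing with $\chi_L^2\Phi(\rho)$ (recall $\Phi(s_0)=0$). Using the monotonicity of $\Phi$ directly in the time-translate estimate is a legitimate substitute for the paper's Lipschitz transforms $\Psi_1,\Psi_2$. At fixed times you only have weak convergence, so use weak lower semicontinuity there rather than Fatou.

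Part (ii), however, has a genuine gap: nothing in your plan gives strong $L^1$ continuity in time that is uniform in $R$. Your compactness gives convergence in $L^1((0,T)\times B_L)$, and through the $\partial_t$ bound only weak convergence at each fixed time. That suffices for mass conservation (weak convergence plus tightness). It does not suffice for $S_{t+h}=S_tS_h$. To pass $S^{R_k}_{t+h}=S^{R_k}_tS^{R_k}_h$ to the limit you need $\mathcal S^{R_k,\ast}_h\rho_0\to S_h\rho_0$ strongly at the given time $h$, so that the contraction can absorb the $k$-dependent datum, and strong convergence at $t$ and $t+h$. The same obstruction blocks $S_\bullet\rho_0\in C([0,T];L^1(\Rd))$ and the $C_0$ property. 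The contraction only reduces equicontinuity to the modulus at the origin, $\sup_R\|S^R_h\rho_0^R-\rho_0^R\|_{L^1(B_R)}\to0$ as $h\downarrow0$, and that uniform-in-$R$ modulus is exactly what must be proved. Interpolating between the $H^{-1}$ H\"older bound, the $L^\infty$ bound and tightness cannot produce it. Densities with values in $[0,\alpha]$ oscillating on a fine scale are tight and small in $H^{-1}$, yet far apart in $L^1$. You have no positive-order spatial regularity of $\rho(t)$ at fixed times, only $\nabla\Phi(\rho)\in L^2$ integrated in time.

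The paper supplies this step in Lemma \ref{lem:time core expanding balls}, in three moves.
\begin{enumerate}
\item It works on the dense class $\mathcal K=\{P^{-1}(w)\}$, where $P$ is the normalised $\Phi$ and $w\in C_c^\infty$ has level sets of uniformly bounded $\mathcal H^{d-1}$-measure. The regularised data $\eta_\ee=P_\ee^{-1}(w)$ then have generator $\Delta\Phi_\ee(\eta_\ee)+\diver(\mob_\ee(\eta_\ee)\nabla V)$ bounded in $L^1$ uniformly in $\ee$ and $R$.
\item The diffusion part equals $(\Phi_\ee(\alpha)-\Phi_\ee(0))\Delta w$. The drift part is controlled by the coarea formula and the uniform bound on $\|\mob_\ee'\|_{L^1(0,\alpha)}$. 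This is where $\mob\in W^{1,1}(0,\alpha)$ is actually used, not to make the bounded-domain theory apply.
\item Telescoping with the $L^1$-contraction gives $\|S^R_t\eta-S^R_s\eta\|_{L^1}\le C_\eta|t-s|$ for all $R>R_\eta$. Density plus contraction then upgrade the local convergence to $C([0,T];L^1_{\mathrm{loc}}(\Rd))$ for every datum.
\end{enumerate}
Only after this do tightness, the semigroup identity, strong continuity and the dissipation inequality from $t_1>0$ go through as you describe.

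If you prefer to avoid the generator bound, there is an alternative under \ref{hyp:free energy bdd below}--\ref{hyp:growth of V}: argue by contradiction for finite-energy data. Suppose $\|S^{R_k}_{h_k}\rho_0^{R_k}-\rho_0^{R_k}\|_{L^1}\ge\delta$ with $h_k\downarrow0$ and $R_k\to\infty$. These densities converge weakly to $\rho_0$, while $\limsup_k\mathcal F_{R_k}[S^{R_k}_{h_k}\rho_0^{R_k}]\le\mathcal F[\rho_0]$. Strict convexity of $U$ (Remark \ref{rem:U strict convex}) then forces strong convergence by a Visintin-type argument, a contradiction. Either way, this step has to be added explicitly.
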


Concerning the minimisation of the free-energy functional, we can characterise the $L^1$-local minimisers by studying the corresponding Euler-Lagrange condition. In particular, the result (and proof) from \cite[Theorem 2.6]{Carrillo_FJ_Gomez-Castro25} also applies to this case. Therefore, we have the following remark.

\begin{remark}[Euler--Lagrange condition]
    \label{thm:Euler-Lagrange}
    Fix $M>0$. If $\widehat\rho\in\mathcal A_{\mathcal F}$ is an $L^1$-local minimiser of $\mathcal F$ on $\mathcal A_M \coloneqq \{ \rho \in \mathcal{A} \, : \, \| \rho \|_{L^1(\Rd)} = M \}$, then there exists $C \in \mathbb R$ such that
    \begin{equation*}
        \begin{aligned}
            U'(\widehat \rho(x)) + V(x) \ge C, & \qquad \text{ for a.e. } x \text{ such that } 0 \le \widehat \rho(x) <\alpha.  \\
            U'(\widehat \rho(x)) + V(x) \le C, & \qquad \text{ for a.e. } x \text{ such that } 0 < \widehat \rho(x) \le\alpha .
        \end{aligned}
    \end{equation*}
    Furthermore, if $U'$ is invertible, then
    \begin{equation}
        \label{eq:Euler-Lagrange for P}
        \widehat \rho (x)= \min \{[(U')^{-1} (C-V(x))]_+ , \alpha \}\quad \text{a.e.~in } \Rd.
    \end{equation}
    Lastly, under assumptions
    \ref{hyp:free energy bdd below}--\ref{hyp:growth of V}, there exists a unique $C$ such
    that \eqref{eq:Euler-Lagrange for P} has mass $M$. The existence and
    uniqueness of this constant on $\Rd$ are proved in the proof of
    \ref{item:minimiser free energy} below.

\end{remark}
Finally, we can give an interpretation of the long-time behaviour also in terms of semigroups. Let us introduce the notion of \timeLimit{}.
\begin{definition}\label{def:timeLimit}
    We say that a semigroup $S$ for a problem \eqref{eq:saturation}  has a \timeLimit{} $S_\infty : \mathcal A \to \mathcal A$ if all the following are satisfied:
    \begin{enumeratedefinition}
        \item For any $\rho_0 \in \mathcal A$ there exists a limit in time $S_t \rho_0 \to S_\infty \rho_0$ strongly in $L^1(\Rd)$ as $t \rightarrow \infty$. 
        \item $S_\infty$ is stationary for the semigroup, i.e., $S_t S_\infty = S_\infty$.
        \item For any $\rho_0 \in \mathcal A$, $S_\infty \rho_0$ is a constant-in-time weak solution to \eqref{eq:saturation}.
    \end{enumeratedefinition}
\end{definition}
Thus, we recover the following result about the $\omega$-limit of \eqref{eq:saturation}.

\begin{theorem}[Long-time behaviour for \eqref{eq:saturation}]
    \label{thm:Asymptotic (P0)}
    Assume \ref{hyp:mobility}--\ref{hyp:Phi}, and let $S$ be an evolution
    map constructed in \ref{item:local existence theorem}.
    \begin{enumeratetheorem}
        \item
        \label{item:local omega compactness}
        Let $\rho_0\in\mathcal A$ and $t_n\to+\infty$. There exist a
        subsequence, not relabelled, and a weak solution $\bar\rho$ of
        \eqref{eq:saturation} on $(0,+\infty)\times\Rd$ such that, for every
        $T,L>0$ and $1\leq p<+\infty$,
        \begin{align}
            S_{t_n+\,\bullet}\rho_0&\longrightarrow\bar\rho
            &&\text{in }C([0,T];H^{-1}(B_L)),
            \label{eq:theorem time translate local H-1}\\
            S_{t_n+\,\bullet}\rho_0&\longrightarrow\bar\rho
            &&\text{strongly in }L^p((0,T)\times B_L).
            \label{eq:theorem time translate local convergence}
        \end{align}
        Moreover, $0\leq\bar\rho\leq\alpha$ and
        \begin{equation}\label{eq:local omega mass inequality}
            \|\bar\rho_s\|_{L^1(\Rd)}
            \leq\|\rho_0\|_{L^1(\Rd)}
            \quad\text{for a.e. }s>0.
        \end{equation}
        This conclusion is local in space. For the remaining items, assume in addition \ref{hyp:free energy bdd below}--\ref{hyp:growth of V}, let $S$ be
        the \goodSemigroup{} constructed in
        \ref{item:global existence theorem}, and fix $M>0$. Then: 

        \item
        \label{item:Time limit operator}
        The convergence of the
        time translates is global in space, no mass is lost, and $S$ has a
        \timeLimit{}, denoted by $S_\infty$. Moreover, $S_\infty$ is an
        $L^1(\Rd)$-contraction.

        \item
        \label{item:Semi-group local minimiser}
        Every $L^1$-local minimiser
        $\widehat\rho\in\mathcal A_{\mathcal F}\cap\mathcal A_M$ of
        $\mathcal F$ over $\mathcal A_M$ is stationary for the semigroup. We have that
        \[
            S_t\widehat\rho=\widehat\rho
            \qquad\text{for every }t\geq0.
        \]

        \item
        \label{item:minimiser free energy}
        The density
        $\widehat\rho$ defined in \eqref{eq:Euler-Lagrange for P} is the
        unique $L^1$-local and global minimiser of the free energy
        \eqref{eq:Energy} over $\mathcal A_M$.
    \end{enumeratetheorem}
\end{theorem}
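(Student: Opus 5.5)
For item \ref{item:local omega compactness} I would work with the time translates $\rho^n_s\coloneqq S_{t_n+s}\rho_0$ on $(0,T)\times B_L$. The starting point is the family of local Caccioppoli estimates used for Theorem \ref{thm:existence saturation}: testing with $\chi^2$ for a cutoff $\chi\in C_c^\infty(B_{2L})$ gives a bound on $\int_0^T\int \chi^2|\nabla\Phi(\rho^n)|^2$ that is uniform in $n$. This uniformity comes from $0\le\rho\le\alpha$, from $\Phi$ being bounded on $[0,\alpha]$, and from $V\in C^2$ being bounded on $B_{2L}$.

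I would then get time regularity from the weak formulation, which bounds $\partial_t\rho^n$ in $L^2(0,T;H^{-1}(B_L))$. Aubin--Lions--Simon gives compactness of $\rho^n$ in $C([0,T];H^{-1}(B_L))$. To upgrade to strong $L^p$ convergence I would use the monotone map $\Phi$. The pair $\Phi(\rho^n)$ (bounded in $L^2H^1$) and $\rho^n$ (compact in $L^2H^{-1}$) satisfies $\int\!\!\int \Phi(\rho^n)\rho^n\chi\to\int\!\!\int\overline{\Phi(\rho)}\bar\rho\chi$, and a Minty argument gives $\overline{\Phi(\rho)}=\Phi(\bar\rho)$. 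Strict monotonicity of $\Phi$ (Remark \ref{rem:U strict convex}) then yields a.e. convergence, and $L^\infty$ bounds give $L^p$ convergence for $p<\infty$. A diagonal argument over $T,L\in\mathbb N$ produces one subsequence. I would pass to the limit in the weak formulation, noting that $\mob(\rho^n)\to\mob(\bar\rho)$ by continuity. The mass bound \eqref{eq:local omega mass inequality} follows from Fatou together with \eqref{eq:main local mass inequality}.

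For item \ref{item:Time limit operator}, the free energy is nonincreasing and bounded below by \ref{hyp:free energy bdd below}, and Lemma \ref{lem:uniform tightness omega} gives uniform tails. First take $\rho_0\in\mathcal A_{\mathcal F}$. Summing \eqref{eq:free energy dissipation} gives
\[
\int_{t_n}^{t_n+T}\!\!\int\mob(\rho)|\nabla(U'(\rho)+V)|^2\to0,
\]
so by lower semicontinuity any limit $\bar\rho$ has zero dissipation. Hence $\mob(\bar\rho)\nabla(U'(\bar\rho)+V)=0$, and \eqref{eq:H-1 continuity} forces $\bar\rho$ to be constant in time. Tightness makes the convergence hold in $L^1(\Rd)$ with the same mass $M$.

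The main obstacle is \textbf{uniqueness of the limit}, which is needed to get convergence of the whole trajectory. My plan is to identify the limit through the free energy. Zero dissipation gives, on each connected component of $\{0<\bar\rho<\alpha\}$, that $U'(\bar\rho)+V$ is constant. This alone does not force a single global constant $C$. I would first try the following: a stationary solution with $\mathcal F[\bar\rho]=\lim_t\mathcal F[S_t\rho_0]$ is an $\omega$-limit point. Then the contraction property, applied against the stationary minimiser $\widehat\rho$ of mass $M$ from item \ref{item:Semi-group local minimiser}, shows that $t\mapsto\|S_t\rho_0-\widehat\rho\|_{L^1}$ is nonincreasing, so it converges.

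I would combine this with a general lemma: for an $L^1$-contraction semigroup with a precompact orbit, the $\omega$-limit set consists of points that are mutually equidistant under $S_t$, and $S_t$ acts as an isometry on it. Passing to the limit in $\|S_t\rho_0-S_t\eta_0\|$ along common subsequences then gives that the $\omega$-limit is a single stationary point, provided one shows that isometric orbits of stationary states are trivial. Since every element of the $\omega$-limit set is stationary, it remains to rule out distinct stationary points $\bar\rho_1\neq\bar\rho_2$ both in the $\omega$-limit. To do this I would use that $\|S_t\rho_0-\bar\rho_i\|$ is nonincreasing and has limit $0$ along the subsequence converging to $\bar\rho_i$. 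That distance is therefore eventually small for all $t$, so the trajectory converges to $\bar\rho_i$ for every $i$, which forces $\bar\rho_1=\bar\rho_2$. This gives $S_\infty\rho_0$.

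For general $\rho_0\in\mathcal A$ I would approximate by data in $\mathcal A_{\mathcal F}$ and use the $L^1$-contraction; $S_\infty$ inherits the contraction by passing to the limit in \eqref{eq:main local contraction}.

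For item \ref{item:Semi-group local minimiser}, let $\widehat\rho$ be a local minimiser. For small $t$, $S_t\widehat\rho$ is $L^1$-close to $\widehat\rho$ with the same mass and $\mathcal F[S_t\widehat\rho]\le\mathcal F[\widehat\rho]$, so equality holds. The dissipation then vanishes on $[0,t]$, and \eqref{eq:H-1 continuity} gives $S_t\widehat\rho=\widehat\rho$; the semigroup property extends this to all $t$.

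For item \ref{item:minimiser free energy}, the mass map $C\mapsto\int\min\{[(U')^{-1}(C-V)]_+,\alpha\}$ is continuous and nondecreasing. It is finite by coercivity of $V$, tends to $0$ as $C\to-\infty$ and to $\infty$ as $C\to+\infty$, and is strictly increasing wherever it lies in $(0,\infty)$; this gives existence and uniqueness of $C$. A global minimiser exists by the direct method, using tightness on sublevels and convexity of $U$ for lower semicontinuity. Remark \ref{thm:Euler-Lagrange} then forces every local minimiser to equal this $\widehat\rho$.
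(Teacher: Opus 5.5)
Your plan is sound, and for items i) and iv) it follows a genuinely different route from the paper. For i), the paper does not work with $\Phi(\rho)$ directly. It feeds $\Psi_1(\rho),\Psi_2(\rho)$ into its abstract compactness proposition, for two reasons:
\begin{itemize}
\item these are Lipschitz in $s$, which the time-translation estimate $\|\Delta_h\Psi_i(\rho)\|^2\lesssim\langle\Delta_h\rho,\chi_L^2\Delta_h\Psi_i(\rho)\rangle$ needs when $\Phi$ itself is not Lipschitz;
\item $|\nabla\Psi_i(\rho)|\le C|\nabla\Phi(\rho)|$ by the quotient bound \eqref{eq:hypothesis Phi/Phi' ee = 0}.
\end{itemize}
It then recovers $\rho$ through the inverse branches $\varpi_i$. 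Your compensated-compactness argument pairs the strong $C([0,T];H^{-1}(B_{2L}))$ convergence of $\rho^n$ with the weak $L^2(0,T;H^1_0(B_{2L}))$ convergence of $\chi\Phi(\rho^n)$. This gives $\int_0^T\!\int(\Phi(\rho^n)-\Phi(\bar\rho))(\rho^n-\bar\rho)\chi\to0$. It needs only \eqref{eq:Caccioppoli Phi whole space}, which is uniform in the initial time, and strict monotonicity of $\Phi$. It is shorter and avoids the quotient hypothesis at this step; Minty is not even needed, since expanding the product gives the limit directly.

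For iv), the paper uses neither the direct method nor the Euler--Lagrange remark. With $\Upsilon(z)=\min\{[(U')^{-1}(z)]_+,\alpha\}$ minimising $s\mapsto U(s)-zs$ on $[0,\alpha]$, one gets the pointwise calibration $U(s)+Vs-U(\widehat\rho)-V\widehat\rho\ge C(s-\widehat\rho)$. Integrated at equal mass, this gives global minimality. Strict convexity gives uniqueness, and the segment $(1-\lambda)\rho_*+\lambda\widehat\rho$ shows that every $L^1$-local minimiser is global. Your route is: existence by tightness and weak lower semicontinuity, then identification through Euler--Lagrange and uniqueness of $C$. It works, but it leans on the Euler--Lagrange statement that the paper only imports from the bounded-domain work. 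You must also exclude infinite-energy local minimisers, which follows because finite-energy densities are $L^1$-dense in $\mathcal A_M$.

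Two steps need repair.

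\emph{First, finiteness of the mass map in iv).} The map $Q(C)=\int_{\Rd}\Upsilon(C-V)\,\dx$ is \emph{not} finite by coercivity of $V$ alone. If $U'(0^+)=-\infty$ the profile is positive everywhere. For instance, $U(s)=s\log s$ with $\mob(s)=s(\alpha-s)$ and the coercive $V(x)=\frac d2\log(1+|x|^2)$ give $\Upsilon(C-V)\sim e^{C-1}(1+|x|^2)^{-d/2}\notin L^1(\Rd)$ for every $C$. You need (MT$_1$): comparing with $s=0$ gives $U(\rho_C)+V\rho_C\le C\rho_C$, hence $((1-\vartheta)V-C)\rho_C\le g$. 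That integrable majorant is what gives finiteness, continuity, and $Q(C)\to0$ as $C\to-\infty$. Strict increase of $Q$ on $\{0<Q<\infty\}$ also needs proof. The paper notes that a common profile for $C_1<C_2$ would be a continuous $\{0,\alpha\}$-valued function on the connected space $\Rd$.

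\emph{Second, stationarity of limit points in ii).} The detours through $\widehat\rho$ and the isometry lemma are unnecessary. That lemma also presupposes precompactness of $\{S_t\rho_0\}_{t\ge0}$ in $L^1(\Rd)$ at fixed times, which you have not shown; you only have space--time compactness of translates. Moreover, a constant-in-time limit of translates is not automatically a fixed point of $S$, since $S$ is only a selection among possibly non-unique weak solutions. The paper closes this as follows. It chooses $s_n\in(0,1)$ with $S_{t_n+s_n}\rho_0\to\rho_\infty$ in $L^1(\Rd)$, and likewise at $s_n+h$; this is possible from the $L^1((0,T)\times\Rd)$ convergence. It then uses $S_{t_n+s_n+h}\rho_0=S_hS_{t_n+s_n}\rho_0$ and the contraction to get $S_h\rho_\infty=\rho_\infty$. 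Only after this does your final monotone-distance argument, which is the paper's, apply.
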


Finally, we want to remark that there are choices of the mobility $\mob$ and the potentials $U$ and $V$ for
which \eqref{eq:saturation} admits infinitely many stationary states in
addition to \eqref{eq:Euler-Lagrange for P}. This is discussed in \cite[Section 6.3]{Carrillo_FJ_Gomez-Castro25}. The example presented there for bounded domains also applies in the case on $\Rd$.

\section{Existence}\label{sec:Existence}

The existence strategy is as follows. First, taking into account the theory on bounded domains in \cite{Carrillo_FJ_Gomez-Castro25} we will consider a regularised problem over bounded balls $B_R$. We will study this problem and \textit{a priori} Caccioppoli local estimates. Afterwards, we will go from the regularised problem to \eqref{eq:saturation}, showing the stability of the \textit{a priori} Caccioppoli local estimates. We discuss this in Subsections \ref{sec:a priori} and \ref{sec:Limit ee to 0}. Afterwards, we take the limit over a sequence of bigger balls and under assumptions \ref{hyp:mobility}--\ref{hyp:Phi} we prove existence of local solutions over $\Rd$, Subsection \ref{sec:Existence local}. Finally, with the extra mass tightness assumptions \ref{hyp:free energy bdd below}--\ref{hyp:growth of V} we upgrade the notion of solution to global, Subsection \ref{sec:Existence global}.

In order to proceed with our construction, for $R>0$, we set
\begin{equation}\label{def:AR}
    \mathcal A_R\coloneqq
    \{\rho\in L^1(B_R):0\leq\rho\leq\alpha\}.
\end{equation}
For $\rho_0\in\mathcal A$ we write
$\rho_0^R=\rho_0\vert_{B_R}$. If $\rho\in\mathcal A_R$, its extension by
zero is denoted by $\rho^*=\rho\chi_{B_R}$.

\subsection{The regularised problem}\label{sec:a priori}

We begin with the uniformly parabolic approximation on $B_R$,
\begingroup
\renewcommand{\theHequation}{Pepsilon}
\begin{equation}\tag{P$_\ee$}\label{eq:saturation regularised}
    \partial_t\rho
    =\Delta\Phi_\ee(\rho)
      +\diver\bigl(\mob_\ee(\rho)\nabla V\bigr)
    \quad\text{in }(0,T)\times B_R,
\end{equation}
\endgroup
with the no-flux condition
\begin{equation}\label{eq:regularised no flux}
    \bigl(\nabla\Phi_\ee(\rho)
      +\mob_\ee(\rho)\nabla V\bigr)\cdot\nu=0
    \quad\text{on }(0,T)\times\partial B_R.
\end{equation}
We consider the regularisation introduced in
\cite[Section 2.2]{Carrillo_FJ_Gomez-Castro25}, which regularises both
the nonlinear diffusion and the mobility. For completeness, we describe
the construction and record the additional variation estimate required for
the expanding-ball limit. Given $\kappa(\ee)>0$ with
$\kappa(\ee)\to0$ as $\ee \to 0$, set
\[
 \underline\Phi_\ee'(s)
 \coloneqq\min\{\Phi'(s),\kappa(\ee)^{-1}\}+\ee.
\]
The diffusion is chosen so that $\Phi_\ee\in C^3([0,\alpha])$,
\begin{equation}\label{eq:regularised diffusion construction}
 \underline\Phi_\ee'(s)
 \leq\Phi_\ee'(s)
 \leq(1+\ee)\underline\Phi_\ee'(s),
 \qquad
 \Phi_\ee(s_0)=0,
 \qquad
 \Phi_\ee\longrightarrow\Phi
 \quad\text{in }C^2_{\mathrm{loc}}((0,\alpha)).
\end{equation}
Let us remark that \[\ee \leq \underline\Phi_\ee'(s) \leq \kappa (\ee)^{-1} + \ee. \] 
Thus, the problem has uniformly elliptic diffusion for every fixed $\ee > 0$. The mobility is chosen with
\begin{equation}\label{eq:regularised mobility construction}
 \begin{gathered}
  \mob_\ee\in C^1([0,\alpha]),\qquad
  \mob_\ee(0)=\mob_\ee(\alpha)=0,\qquad
  \mob_\ee(s)>0\quad(0<s<\alpha),\\
  |\mob_\ee'|>1\quad\text{in neighbourhoods of }0\text{ and }\alpha,\\
  (\ee,s)\longmapsto\mob_\ee(s)
  \in C([0,1]\times[0,\alpha])
  \cap C^\infty_{\mathrm{loc}}((0,1]\times[0,\alpha]),
  \qquad \mob_0=\mob.
 \end{gathered}
\end{equation}
Following \cite[Lemma 2.3]{Carrillo_FJ_Gomez-Castro25}, the truncation
scale and the mobility must be selected simultaneously. The next lemma
shows that the chosen approximations converge to the original setting and keep the relevant variational information. 

\begin{lemma}
    \label{lem:compatible regularisation}
    Under assumptions \ref{hyp:mobility}--\ref{hyp:Phi}, there exist a
    function $\kappa:(0,1]\to(0,\alpha/4)$ with
    $\kappa(\ee)\to0$ and families $\Phi_\ee$, $\mob_\ee$ satisfying
    \eqref{eq:regularised diffusion construction} and
    \eqref{eq:regularised mobility construction} such that
    \begin{equation}\label{eq:regularisation convergence}
    \begin{gathered}
        \Phi_\ee'\longrightarrow\Phi'
        \quad\text{in }L^1(0,\alpha),
        \qquad
        \Phi_\ee\longrightarrow\Phi
        \quad\text{in }C([0,\alpha]),\\
        \mob_\ee\longrightarrow\mob
        \quad\text{in }C([0,\alpha]),
        \qquad
        \mob_\ee\longrightarrow\mob
        \quad\text{in }C^1_{\mathrm{loc}}((0,\alpha)),
    \end{gathered}
    \end{equation}
    and
    \begin{equation}\label{eq:uniform regularised mobility variation}
       \sup_{0<\ee\leq1}
       \|\mob_\ee'\|_{L^1(0,\alpha)}
       =\sup_{0<\ee\leq1}
       \|\mob_\ee\|_{\mathrm{TV}(0,\alpha)}<+\infty.
    \end{equation}
\end{lemma}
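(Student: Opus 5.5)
The proof is an explicit construction in two independent parts, the diffusion $\Phi_\ee$ and the mobility $\mob_\ee$, followed by a choice of the truncation scale $\kappa(\ee)$ that makes both families converge at the required rates.

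\emph{Diffusion.} Since $\Phi'\ge0$ and $\Phi'=\mob U''\in L^1(0,\alpha)$ by \ref{hyp:Phi}, the truncation $\min\{\Phi',\kappa^{-1}\}$ converges to $\Phi'$ in $L^1(0,\alpha)$ as $\kappa\to0$, by dominated convergence. Hence $\underline\Phi_\ee'\to\Phi'$ in $L^1$.
\begin{enumerate}[label=\rm(\alph*)]
\item Mollify $\underline\Phi_\ee'$ on a scale $\delta(\ee)$, after extending it continuously beyond the endpoints, which is possible because the one-sided limits of $\Phi'$ exist.
\item Since $\underline\Phi_\ee'\ge\ee$, a fine enough mollification is uniformly close to $\underline\Phi_\ee'$, and the truncation makes $\underline\Phi_\ee'$ uniformly continuous. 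After multiplying by a constant we therefore get a $C^2$ function $\psi_\ee$ with $\underline\Phi_\ee'\le\psi_\ee\le(1+\ee)\underline\Phi_\ee'$.
\item Set $\Phi_\ee(s)=\int_{s_0}^s\psi_\ee$, which lies in $C^3([0,\alpha])$ and satisfies $\Phi_\ee(s_0)=0$.
\item Convergence: $\Phi_\ee'\to\Phi'$ in $L^1$ follows from the sandwich and dominated convergence, and this gives $\Phi_\ee\to\Phi$ uniformly on $[0,\alpha]$.
\item On compact subsets of $(0,\alpha)$, $\Phi'$ is bounded and $C^1$. So for small $\ee$ the truncation is inactive there, and with $\delta(\ee)\to0$ fast we obtain $C^2_{\mathrm{loc}}$ convergence.
\end{enumerate}

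\emph{Mobility.} Since $\mob\in W^{1,1}(0,\alpha)\cap C([0,\alpha])\cap C^1(0,\alpha)$, fix a cutoff scale $\eta=\eta(\ee)\to0$.
\begin{enumerate}[label=\rm(\alph*)]
\item On $[2\eta,\alpha-2\eta]$, take a $C^\infty$ approximation of $\mob$ that is $C^1$-close on that interval (mollify $\mob$ with radius $\ll\eta$).
\item Near $0$, replace $\mob$ on $[0,\eta]$ by a function $\ell_\ee$ with $\ell_\ee(0)=0$, slope $\ell_\ee'>1$, and values in $[0,\max_{[0,2\eta]}\mob+\eta]$. For instance take $\ell_\ee(s)=\lambda s$ on $[0,\eta_1]$ with $\lambda\ge2$, where $\eta_1\le\eta$ is chosen so that $\lambda\eta_1$ is small compared with $\mob(\eta)$, or at most $\eta$ if $\mob(\eta)$ is tiny.
\item On $[\eta_1,2\eta]$, glue smoothly and monotonically-in-pieces between $\ell_\ee$ and the smoothed $\mob$, keeping positivity. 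Proceed symmetrically near $\alpha$.
\item Uniform convergence holds because on $[0,2\eta]$ both functions are bounded by $o(1)$, by continuity of $\mob$ at $0$ with $\mob(0)=0$. $C^1_{\mathrm{loc}}$ convergence holds because the modification only touches shrinking neighbourhoods of the endpoints. Joint continuity in $(\ee,s)$ and smoothness for $\ee>0$ follow by letting the parameters depend smoothly on $\ee$, e.g.\ via a smooth partition of unity in $\ee$, or by defining $\mob_\ee$ as a convex combination of consecutive discrete approximants.
\end{enumerate}

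\emph{Key point: the TV bound \eqref{eq:uniform regularised mobility variation}.} This is the main obstacle. The steep linear piece and the gluing contribute variation of order $\sup_{[0,2\eta]}\mob$, because the piece rises from $0$ and then connects monotonically, so each endpoint contributes at most $2\max_{[0,2\eta]}\mob+O(\eta)$. The interior mollification contributes at most $\|\mob'\|_{L^1}$, since mollification does not increase total variation. Hence $\|\mob_\ee'\|_{L^1}\le\|\mob'\|_{L^1(0,\alpha)}+4\|\mob\|_\infty+1$, uniformly in $\ee$. This is exactly where $\mob\in W^{1,1}$ is used. The delicate part is ensuring the gluing stays monotone between two positive values, so that no oscillation adds variation. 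I would glue using $\theta\,\ell_\ee+(1-\theta)\tilde\mob$ with a monotone cutoff $\theta$, and bound its derivative by $|\ell_\ee'|+|\tilde\mob'|+|\theta'|\,|\ell_\ee-\tilde\mob|$, where the last term integrates to at most $\sup|\ell_\ee-\tilde\mob|$.

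Finally, $\kappa(\ee)$ and $\eta(\ee)$ are chosen decreasing to $0$ slowly enough to be compatible, with $\kappa<\alpha/4$. They are coupled only through the requirement that convergence holds simultaneously, which a diagonal choice provides.
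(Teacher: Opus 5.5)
Your proposal is correct for the lemma as stated. The mechanism behind \eqref{eq:uniform regularised mobility variation} is the same as the paper's: the endpoint modifications cost variation comparable to their vanishing height, and the interior part adds nothing beyond $\|\mob\|_{\mathrm{TV}(0,\alpha)}$ (the paper leaves $\mob$ unchanged on $[2\kappa,\alpha-2\kappa]$, you mollify it). Convex interpolation in the parameter then preserves the bound while giving joint smoothness in $(\ee,s)$.

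You differ from the paper in how much you construct and in how the mobility is tied to $\kappa$.
\begin{itemize}
\item \textbf{Diffusion.} You give a self-contained construction: mollify the truncated derivative and rescale by a constant. The truncated derivative is continuous on $[0,\alpha]$ thanks to the one-sided limits, and $\underline\Phi_\ee'\geq\ee$ turns uniform closeness into the multiplicative sandwich. The paper simply cites \cite[Lemma 2.3]{Carrillo_FJ_Gomez-Castro25} for this part.
\item \textbf{Mobility, your construction.} You work at a scale $\eta$ independent of $\kappa$, mollify $\mob$ on both sides, and glue via $\theta\ell_\ee+(1-\theta)\widetilde{\mob}$. You bound the extra variation by $\sup|\ell_\ee-\widetilde{\mob}|$, which is valid.
\item \textbf{Mobility, the paper's construction.} The paper modifies exactly at the truncation scale. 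It uses a ramp of slope $c_{\ee,0}/\kappa>1$ on $[0,\kappa]$ with $c_{\ee,0}>\max\{\kappa,\sup_{[0,2\kappa]}\mob\}$, then a nonincreasing majorant on $[\kappa,2\kappa]$, and then smooths from above.
\end{itemize}

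What the paper's choice buys is the one-sided bounds $\mob_\ee\geq\mob$ on $[\kappa,\alpha-\kappa]$ and $\mob_\ee(s)\geq s$, $\mob_\ee(\alpha-s)\geq s$ for $0<s<\kappa$. These are not part of the statement. They are, however, exactly what the proof of Lemma~\ref{lem:convergence regularised internal energies} uses on $I_\ee$ and $E_\ee$, together with the choice of $\kappa$ ensuring $\ee J_\ee\leq\sqrt{\ee}$.

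Your construction does not provide these bounds. A two-sided mollification need not dominate $\mob$. Near $0$ your profile equals $\lambda s$ only on a short initial interval, after which it blends into $\widetilde{\mob}$, which can lie below $s$ (e.g.\ if $\mob(s)=s^2$ near $0$ and $2\eta<\kappa$). For the lemma itself no coupling between $\eta$ and $\kappa$ is needed, so your diagonal remark is harmless. To reuse your construction downstream, you would have to take $\eta=\kappa$, make the ramp exceed $\sup_{[0,2\kappa]}\mob$, and smooth from above, which is essentially the paper's variant.
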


The regularisation construction is provided by \cite[Lemma 2.3]{Carrillo_FJ_Gomez-Castro25}. We give only the additional argument showing that, under the assumption $\mob\in W^{1,1}(0,\alpha)$, the approximating mobilities can be chosen to satisfy $\sup_{\varepsilon\in(0,1)}\|\mob_\varepsilon'\|_{L^1(0,\alpha)}<\infty$. For the remaining parts of the construction and proof, we refer to the cited lemma.

\begin{proof}
Choose the scale $\kappa=\kappa(\ee)$ as in
\cite[Lemma 2.3]{Carrillo_FJ_Gomez-Castro25}.
We describe only the additional argument needed to obtain the
uniform total-variation bound.

Choose $c_{\ee,0}\to0$ such that
\[
 c_{\ee,0}>
 \max\left\{\kappa,\sup_{0\leq s\leq2\kappa}\mob(s)\right\}.
\]
On $[0,\kappa]$, set
$\widetilde\mob_\ee(s)=c_{\ee,0}s/\kappa$.
On $[\kappa,2\kappa]$, choose a nonincreasing majorant of $\mob$
joining $c_{\ee,0}$ to $\mob(2\kappa)$.
Use the reflected construction near $\alpha$, with height
$c_{\ee,\alpha}\to0$, and leave $\mob$ unchanged on
$[2\kappa,\alpha-2\kappa]$.
Since $\mob\in W^{1,1}(0,\alpha)$, the resulting preliminary
mobility satisfies
\[
 \|\widetilde\mob_\ee\|_{\mathrm{TV}(0,\alpha)}
 \leq \|\mob\|_{\mathrm{TV}(0,\alpha)}
       +2c_{\ee,0}+2c_{\ee,\alpha}.
\]
We next smooth $\widetilde\mob_\ee$ from above, retaining its
linear endpoint profiles, with a uniformly vanishing approximation
error and a vanishing increase in total variation:
\[
 \|\mob_\ee\|_{\mathrm{TV}(0,\alpha)}
 \leq \|\widetilde\mob_\ee\|_{\mathrm{TV}(0,\alpha)}+o(1).
\]
The smoothing is also chosen to give local $C^1$ convergence.
Smooth dependence on $\ee$ is obtained by local smoothing and
convex interpolation in the parameter, preserving the lower bounds
\[
 \mob_\ee\geq\mob
 \quad\text{on }[\kappa,\alpha-\kappa],
 \qquad
 \mob_\ee(s)\geq s,\quad
 \mob_\ee(\alpha-s)\geq s
 \quad\text{for }0<s<\kappa.
\]
Since the interpolation weights depend only on $\ee$, convex
interpolation preserves the uniform total-variation bound.

Uniform convergence to $\mob$ follows from the vanishing heights
of the endpoint modifications and the uniformly vanishing smoothing
errors. The remaining parts of the construction and proof are as in
\cite[Lemma 2.3]{Carrillo_FJ_Gomez-Castro25}.
\end{proof}

Let us also pay some extra attention to the natural regularisation $U_\ee$ of the diffusion potential $U$. 
\begin{lemma}
    \label{lem:convergence regularised internal energies}
Assume \ref{hyp:mobility}--\ref{hyp:Phi}. Let $\Phi_\ee$ and
$\mob_\ee$ be the approximations constructed in
Lemma~\ref{lem:compatible regularisation}. We can define a regularised diffusion potential $U_\ee$ given by
    \begin{equation*}
     U_\ee''(s)=\frac{\Phi_\ee'(s)}{\mob_\ee(s)},
     \qquad
     U_\ee\left(\frac\alpha2\right)=U\left(\frac\alpha2\right),
     \qquad
     U_\ee'\left(\frac\alpha2\right)=U'\left(\frac\alpha2\right) .
    \end{equation*}
Then the continuous extensions of $U_\ee$ to $[0,\alpha]$ satisfy 
\begin{equation}
\label{eq:convergence regularised internal energies}
\begin{gathered}
 U_\ee\longrightarrow U
 \quad\text{in }C([0,\alpha]),
 \qquad
 U_\ee'\longrightarrow U'
 \quad\text{in }L^1(0,\alpha),\\
 U_\ee'\longrightarrow U',
 \qquad
 U_\ee''\longrightarrow U''
 \quad\text{locally uniformly in }(0,\alpha).
\end{gathered}
\end{equation}
\end{lemma}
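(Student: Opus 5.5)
The plan is to prove the local statements first and then handle the endpoints. On any compact $[a,b]\subset(0,\alpha)$ we have $\Phi_\ee\to\Phi$ in $C^2([a,b])$ by \eqref{eq:regularised diffusion construction}, and $\mob_\ee\to\mob$ in $C^1([a,b])$ by \eqref{eq:regularisation convergence}. Since $\mob\geq c_{a,b}>0$ on $[a,b]$, the quotient $U_\ee''=\Phi_\ee'/\mob_\ee$ converges uniformly on $[a,b]$ to $\Phi'/\mob=U''$. Integrating once and twice from $\alpha/2$, where the normalisations give $U_\ee'(\alpha/2)=U'(\alpha/2)$ and $U_\ee(\alpha/2)=U(\alpha/2)$, we obtain locally uniform convergence of $U_\ee'$ and $U_\ee$ on $(0,\alpha)$. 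This settles the second line of \eqref{eq:convergence regularised internal energies}.

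For the global statements I would establish a uniform integrability bound for $U_\ee'$ near the endpoints. Write
\[
U_\ee'(s)=U'(\alpha/2)-\int_s^{\alpha/2}\frac{\Phi_\ee'(\tau)}{\mob_\ee(\tau)}\diff\tau ,\qquad s<\alpha/2,
\]
and similarly for $s>\alpha/2$. The key is a domination: I want $|U_\ee'(s)|\leq C(1+|U'(s)|)$, or at least a uniform bound by some fixed $L^1$ function. I would use the construction of Lemma~\ref{lem:compatible regularisation}. On $[2\kappa,\alpha-2\kappa]$ the mobility is unchanged up to a smoothing from above, so $\mob_\ee\geq\mob$ there. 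On $[\kappa,2\kappa]$ it is a majorant of $\mob$. On $(0,\kappa)$ we have $\mob_\ee(s)\geq s$ and $\mob_\ee\geq c_{\ee,0}s/\kappa\geq\mob$ up to smoothing. Combining this with $\Phi_\ee'\leq(1+\ee)(\min\{\Phi',\kappa^{-1}\}+\ee)$ gives
\[
0\le U_\ee''\le(1+\ee)\frac{\Phi'}{\mob}+\frac{2\ee}{\mob_\ee}
\]
wherever $\mob_\ee\geq\mob$. The first term integrates to at most a constant times $|U'(s)-U'(\alpha/2)|$, which lies in $L^1$ since $U\in W^{1,1}$. The second term is the main obstacle. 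The quantity $\ee\int_s^{\alpha/2}\mob_\ee^{-1}$ can be controlled on $(0,\kappa)$ using $\mob_\ee(s)\geq c_{\ee,0}s/\kappa$, which gives a logarithmic quantity. Its $L^1$ norm in $s$ is $O(\ee\kappa/c_{\ee,0})$ there, and on $[\kappa,\alpha/2]$ it is $\ee\int\mob^{-1}$ truncated at $\kappa$. I would try to arrange this through the freedom in choosing $\kappa(\ee)$, possibly after slowing $\ee\to0$ relative to $\kappa$, which the cited construction allows. The aim is to make $\ee\int_\kappa^{\alpha/2}\mob^{-1}\to0$, and this part is therefore $o(1)$ in $L^1$.

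With the bound $|U_\ee'|\leq C|U'|+C+h_\ee$, where $h_\ee\to0$ in $L^1(0,\alpha)$, together with pointwise convergence on $(0,\alpha)$, Vitali's theorem (generalised dominated convergence) gives $U_\ee'\to U'$ in $L^1(0,\alpha)$. Then $U_\ee(s)-U(s)=\int_{\alpha/2}^s(U_\ee'-U')$ is bounded by $\|U_\ee'-U'\|_{L^1}$ uniformly in $s$. This shows that $U_\ee$ extends continuously to $[0,\alpha]$ and that $U_\ee\to U$ in $C([0,\alpha])$, which completes the proof.
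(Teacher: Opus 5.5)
Your route differs from the paper's and can be made to work, but one step is false as written, and the domination you build on it fails. On $(0,\kappa)$ you assert $\mob_\ee\ge c_{\ee,0}s/\kappa\ge\mob$. The construction gives only $c_{\ee,0}>\sup_{[0,2\kappa]}\mob$ and the lower bound $\mob_\ee(s)\ge s$ there. In fact, since $\mob_\ee\in C^1([0,\alpha])$ and $\mob_\ee(0)=0$, we have $\mob_\ee(s)\le\|\mob_\ee'\|_{L^\infty}s$. Hence $\mob_\ee\ge\mob$ near $0$ is impossible whenever $\mob(s)/s\to\infty$, for instance for the admissible mobility $\mob(s)=\sqrt s\,(\alpha-s)$.

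This is not cosmetic. Suppose in addition $\Phi'(0^+)=a\in(0,\infty)$. Then $U''\sim a/(\alpha\sqrt s)$, so $U'$ is bounded near $0$. On the other hand, $U_\ee'(s)\sim-\frac{\Phi_\ee'(0)}{\mob_\ee'(0)}\log\frac1s$ with $\Phi_\ee'(0)\ge a+\ee$ for small $\ee$. The remainder coming from your $2\ee/\mob_\ee$ term only grows like $\frac{2\ee}{\mob_\ee'(0)}\log\frac1s$. So the bound $|U_\ee'|\le C|U'|+C+h_\ee$ fails for every $\ee<a$.

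The repair uses the bound you already quote, $\mob_\ee(\tau)\ge\tau$ on $(0,\kappa)$. Put the whole endpoint layer into the vanishing remainder rather than into the fixed dominating function. For $s<\kappa$,
$$\int_s^\kappa U_\ee''\,\diff\tau\le(1+\ee)\int_s^\kappa\bigl(\Phi'(\tau)+\ee\bigr)\tau^{-1}\,\diff\tau,$$
and by Fubini its $L^1(0,\kappa)$ norm equals $(1+\ee)\int_0^\kappa(\Phi'+\ee)\,\diff\tau\to0$. This is exactly the role played by $\mob_\ee\ge\ell$ on $E_\ee$ in the paper.

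You also do not need the unweighted condition $\ee\int_\kappa^{\alpha/2}\mob^{-1}\to0$, which would force you to alter $\kappa$. The $L^1(0,\alpha/2)$ norm of $s\mapsto\ee\int_{\max\{s,\kappa\}}^{\alpha/2}\mob^{-1}$ equals $\ee\int_\kappa^{\alpha/2}\tau\,\mob(\tau)^{-1}\,\diff\tau$. This is at most $\ee J_\ee$, where $J_\ee=\int_\kappa^{\alpha-\kappa}\ell/\mob$, and the paper's choice of $\kappa$ already controls it.

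With these corrections, your argument (domination by $C|U'|+C$ plus an $L^1$-null remainder, then Vitali) is a valid alternative to the paper's. The paper never builds a pointwise dominating function. It proves $Q_\ee=\int_0^\alpha\ell\,U_\ee''\to Q=\int_0^\alpha\ell\,U''$ from a $\limsup$ bound together with Fatou. It then upgrades this to $\int_0^\alpha\ell\,|U_\ee''-U''|\to0$ by a Scheff\'e-type identity, and concludes with $\|U_\ee'-U'\|_{L^1}\le\int_0^\alpha\ell\,|U_\ee''-U''|$. Both routes use the same inputs. The paper's gives the slightly stronger weighted convergence of $U_\ee''$, while yours reaches the $L^1$ convergence of $U_\ee'$ directly.
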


\begin{proof}
On every compact subinterval of $(0,\alpha)$, the convergence
of $\Phi_\ee'$ and $\mob_\ee$, together with the positivity
of $\mob$, gives
\[
 U_\ee''=\frac{\Phi_\ee'}{\mob_\ee}
 \longrightarrow \frac{\Phi'}{\mob}=U''
 \quad\text{locally uniformly in }(0,\alpha).
\]
To obtain convergence up to the endpoints, we establish
convergence of the second derivatives in a weighted
$L^1$ space. Set
\[
 \ell(s)=\min\{s,\alpha-s\}.
\]
By convexity and Tonelli's theorem,
\[
\begin{aligned}
    Q
    \coloneqq & \int_0^\alpha \ell(s)U''(s)\,\diff s
      =\int_0^{\alpha/2}sU''(s)\,\diff s
       +\int_{\alpha/2}^\alpha(\alpha-s)U''(s)\,\diff s\\
    = & \int_0^{\alpha/2}
        \left(\int_s^{\alpha/2}U''(r)\,\diff r\right)\diff s
      +\int_{\alpha/2}^\alpha
        \left(\int_{\alpha/2}^sU''(r)\,\diff r\right)\diff s\\
    = & \int_0^\alpha
        |U'(s)-U'(\alpha/2)|\,\diff s<\infty,
\end{aligned}
\]
where finiteness follows from $U'\in L^1(0,\alpha)$
and $U'(\alpha/2)\in\mathbb R$. 
Let us now consider
\[
 I_\ee=(\kappa(\ee),\alpha-\kappa(\ee)),
 \qquad
 E_\ee=(0,\alpha)\setminus I_\ee,
 \qquad
 J_\ee=\int_{I_\ee}\frac{\ell(s)}{\mob(s)}\,\diff s.
\]
We take the choice of $\kappa$ from 
Lemma \ref{lem:compatible regularisation} which ensures that
$\ee J_\ee\leq\sqrt{\ee}$ for $\ee$ small enough.
In particular, $\ee J_\ee\to0$. 
On $I_\ee$, the lower bound $\mob_\ee\geq\mob$ and
the estimate $\Phi_\ee'\leq(1+\ee)(\Phi'+\ee)$ imply
\[
 \int_{I_\ee}\ell(s)U_\ee''(s)\,\diff s
 \leq(1+\ee)\left(
       \int_{I_\ee}\ell(s)U''(s)\,\diff s
       +\ee J_\ee\right).
\]
On the set $E_\ee$, the construction instead gives
$\mob_\ee\geq\ell$. Consequently, it follows that
\[
 \begin{aligned}
 \int_{E_\ee}\ell(s)U_\ee''(s)\,\diff s
 &\leq(1+\ee)\int_{E_\ee}(\Phi'(s)+\ee)\,\diff s =(1+\ee)\left(
       \int_{E_\ee}\Phi'(s)\,\diff s
       +2\ee\kappa(\ee)\right)
       \longrightarrow0,
 \end{aligned}
\]
since $\Phi'\in L^1(0,\alpha)$ and $\kappa(\ee)\to0$.
Thus, the modifications near the endpoints make a vanishing
contribution to the weighted integrals of $U_\ee''$. 
Adding these estimates together yields that
\[
 \limsup_{\ee\to0}Q_\ee\leq Q,
 \qquad \text{where }
 Q_\ee:=\int_0^\alpha\ell(s)U_\ee''(s)\,\diff s.
\]
On the other hand, $U_\ee''\geq0$ and
$U_\ee''\to U''$ pointwise in $(0,\alpha)$. Therefore, Fatou's
lemma gives $Q\leq\liminf_{\ee\to0}Q_\ee$.
Hence, $Q_\ee\to Q$. 
Moreover, the Dominated Convergence Theorem applies to
\[\ell\min\{U_\ee'',U''\},\] which is bounded above by
the integrable function \[\ell U''.\] Therefore,
\[
 \begin{aligned}
 \int_0^\alpha
       \ell(s)|U_\ee''(s)-U''(s)|\,\diff s
 &=Q_\ee+Q
   -2\int_0^\alpha
       \ell(s)\min\{U_\ee''(s),U''(s)\}\,\diff s \longrightarrow 0.
 \end{aligned}
\]
We consider the normalisation at $\alpha/2$ given by $U_\ee \left( \frac{\alpha}{2} \right) = U \left( \frac{\alpha}{2} \right) $,  $U_\ee' \left( \frac{\alpha}{2} \right) = U' \left( \frac{\alpha}{2} \right) $ to integrate
this convergence. Since
\[
 U_\ee'(s)-U'(s)
 =\int_{\alpha/2}^{s}
       \bigl(U_\ee''(r)-U''(r)\bigr)\,\diff r,
\]
Fubini's theorem gives
\[
 \|U_\ee'-U'\|_{L^1(0,\alpha)}
 \leq\int_0^\alpha
       \ell(s)|U_\ee''(s)-U''(s)|\,\diff s
 \longrightarrow0.
\]
By \ref{hyp:H_U}, $U'\in L^1(0,\alpha)$, and the preceding
estimate also gives $U_\ee'\in L^1(0,\alpha)$.
Hence both functions extend continuously to $[0,\alpha]$.
Using $U_\ee(\alpha/2)=U(\alpha/2)$ and the fundamental
theorem of calculus, we obtain
\[
 \|U_\ee-U\|_{C([0,\alpha])}
 \leq\|U_\ee'-U'\|_{L^1(0,\alpha)}
 \longrightarrow0.
\]
Finally, integrating the local uniform convergence of
$U_\ee''$ from $\alpha/2$ gives local uniform convergence
of $U_\ee'$ to $U'$. This proves all the stated
convergences.
\end{proof}

We point out that the construction preserves the quantitative endpoint control in
assumption \ref{hyp:Phi}:
\begin{equation}\label{eq:regularised ratio bound}
 \sup_{0<\ee\leq1}
 \left( \sup_{s_0\leq s\leq\alpha}
 \left|\frac{[\Phi_\ee(\alpha)-\Phi_\ee(s)]\Phi_\ee(s)}
 {\Phi_\ee'(s)}\right|
 + \sup_{0\leq s\leq s_0}
 \left|\frac{[\Phi_\ee(s)-\Phi_\ee(0)]\Phi_\ee(s)}
 {\Phi_\ee'(s)}\right|
 \right)<+\infty.
\end{equation}
This is \cite[Lemma 3.2]{Carrillo_FJ_Gomez-Castro25} for the
regularised problem.
For later use, define
\begin{align*}
    \mathcal F_R[\rho]
      &\coloneqq\int_{B_R}\bigl(U(\rho)+V\rho\bigr)\,\dx, \\
      \mathcal F_{R,\ee}[\rho]
      &\coloneqq\int_{B_R}\bigl(U_\ee(\rho)+V\rho\bigr)\,\dx,
\end{align*}
the free-energy functional for the problem \eqref{eq:saturation} posed in the ball $B_R$ with no-flux and of its regularisation \eqref{eq:saturation regularised}. 

For fixed $R, \ee>0$, condition
\eqref{eq:regularised diffusion construction} makes
\eqref{eq:saturation regularised} a uniformly parabolic quasilinear
equation with the homogeneous no-flux boundary condition
\eqref{eq:regularised no flux}. This bounded-domain programme was carried
out for the present saturation structure in
\cite[Theorem 2.4]{Carrillo_FJ_Gomez-Castro25}. In particular it produces a \goodSemigroup{} $S^{R, \ee}$ for the regularised problem \eqref{eq:saturation regularised} over the class of admissible initial data $\mathcal{A}_R$ introduced in \eqref{def:AR}.


In particular, the \goodSemigroup{} $S^{R, \ee}$ is such that
\begin{align}
  & 0\leq S_t^{R,\ee}\rho_0^R\leq\alpha,
  \label{eq:regularised invariant region}\\
  & \|S_t^{R,\ee}\rho_0^R-S_t^{R,\ee}\eta_0^R\|_{L^1(B_R)}
  \leq\|\rho_0^R-\eta_0^R\|_{L^1(B_R)}.
  \label{eq:regularised bounded contraction}
\end{align}
For notational convenience, set
\[
 \rho_t^{R,\ee}=S_t^{R,\ee}\rho_0^R,
 \qquad
 \rho_t^{R,\ee,*}=\rho_t^{R,\ee}\chi_{B_R}.
\]
Mass conservation and \eqref{eq:regularised invariant region} imply that,
for every $1\leq p<+\infty$,
\begin{equation*}
 \|\rho_t^{R,\ee,*}\|_{L^p(\Rd)}
 \leq \alpha^{(p-1)/p}
       \|\rho_0^R\|_{L^1(B_R)}^{1/p}
 \leq \alpha^{(p-1)/p}
       \|\rho_0\|_{L^1(\Rd)}^{1/p},
\end{equation*}
while
$\|\rho_t^{R,\ee,*}\|_{L^\infty(\Rd)}\leq\alpha$.
The regularised semigroup also satisfies the free-energy dissipation, i.e. for any $0 \leq t_1 < t_2$ it follows that
\begin{equation*}
 0 \leq \int_{t_1}^{t_2}\int_{B_R}
 \mob_\ee(\rho_t^{R,\ee})
 \left|\nabla\bigl(U_\ee'(\rho_t^{R,\ee})+V\bigr)\right|^2
 \,\dx\,\dt \leq \mathcal F_{R,\ee}[\rho_{t_1}^{R,\ee}] - \mathcal F_{R,\ee}[\rho_{t_2}^{R,\ee}].
\end{equation*}

Now that the regularised problem and the semigroup $S^{R,\ee}$ have been
introduced, we specify the order of the two limiting procedures used in the
construction: We take the sequences $R_j$, $\ee_k$. First we pass to the limit $\ee_k \to 0$ for a fixed $R_j$
\[
    S^{R_j,\ee_k} \rho_0^{R_j} \xrightarrow[\ee_k\to0]{}S^{R_j} \rho_0^{R_j}
    \quad\text{on }B_{R_j},
\]
and afterwards we consider the second limit $R_j \to \infty$ 
\[
    \bigl(S_t^{R_j}\rho_0^{R_j} \bigr)^*\xrightarrow[R\to+\infty]{}S_t\rho_0
    \quad\text{on }\Rd.
\]
%
The estimates needed for the limit $R_j \to+\infty$ are local. Fix
$L>0$ and $R>2L$ and choose the local test function 
\begin{equation*}
    \chi_L\in C_c^\infty(B_{2L}) \text{ with }
0\leq\chi_L\leq1 \text{ and } \chi_L=1 \text{ on } B_L.     
\end{equation*}
In the following, we introduce some space and time local estimates. In order to do that first we introduce the auxiliary flow
\begin{equation}\label{eq:theta}
    \vartheta_\ee(s)=\int_0^s\Phi_\ee(\sigma)\,\diff\sigma.
\end{equation}

\begin{lemma}[Uniform local spatial Caccioppoli estimate]
    \label{lem:local spatial Caccioppoli estimate}
    Assume \ref{hyp:mobility}--\ref{hyp:Phi}. Let $\rho^{R,\ee}=S^{R,\ee}_\bullet\rho_0^R$ be the unique classical solution
    of \eqref{eq:saturation regularised} in the bounded domain $(t_1, t_2) \times B_R$. We have that
    \begin{equation}
     \int_{t_1}^{t_2}\int_{B_R}
       |\nabla\Phi_\ee(\rho^{R,\ee})|^2\chi_L^2\,\dx\,\dt
       \leq C_{L}.
       \label{eq:Caccioppoli Phi local}
    \end{equation}
    More precisely,
    \begin{equation*}
       C_{L}=C\left(
       \alpha, L, (t_2 - t_1),
       \|\Phi_\ee(\rho^{R,\ee})\|_
          {L^\infty((t_1,t_2)\times B_R)},
       \|\mob_\ee\|_{L^\infty(0,\alpha)},\|\nabla V\|_{L^2(B_{2L})},
       \|\nabla\chi_L\|_{L^2(\Rd)}
       \right).
    \end{equation*}
    This constant is independent of $R>2L$ and $\ee\in(0,1]$.
\end{lemma}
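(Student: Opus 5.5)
We test the regularised equation \eqref{eq:saturation regularised} against $\Phi_\ee(\rho^{R,\ee})\chi_L^2$, which is admissible since $\rho^{R,\ee}$ is classical and $\chi_L$ is supported in $B_{2L}\subset B_R$; in particular the no-flux condition \eqref{eq:regularised no flux} plays no role and no boundary terms appear. The time derivative is handled through the auxiliary function $\vartheta_\ee$ from \eqref{eq:theta}: since $\vartheta_\ee'=\Phi_\ee$,
\[
\int_{B_R}\partial_t\rho^{R,\ee}\,\Phi_\ee(\rho^{R,\ee})\chi_L^2\,\dx=\frac{\diff}{\diff t}\int_{B_R}\vartheta_\ee(\rho^{R,\ee})\chi_L^2\,\dx .
\]
Integrating in $(t_1,t_2)$, this term is bounded by
\[
2\sup_{0\le s\le\alpha}|\vartheta_\ee(s)|\,|B_{2L}|\le 2\alpha\|\Phi_\ee(\rho^{R,\ee})\|_{L^\infty}|B_{2L}|,
\]
a constant depending only on $\alpha$, $L$ and the $L^\infty$ norm of $\Phi_\ee(\rho^{R,\ee})$. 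Strictly we only need $|\Phi_\ee|$ on the range of $\rho^{R,\ee}$, which lies in $[0,\alpha]$ by \eqref{eq:regularised invariant region}.

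Integrating the spatial part by parts gives
\[
-\int\bigl(\nabla\Phi_\ee(\rho)+\mob_\ee(\rho)\nabla V\bigr)\cdot\bigl(\chi_L^2\nabla\Phi_\ee(\rho)+2\chi_L\Phi_\ee(\rho)\nabla\chi_L\bigr)\,\dx ,
\]
where we abbreviate $\rho=\rho^{R,\ee}$. Moving the good term $\int|\nabla\Phi_\ee(\rho)|^2\chi_L^2$ to the left, three cross terms remain.
\begin{itemize}
\item $2\int\chi_L\Phi_\ee\nabla\Phi_\ee\cdot\nabla\chi_L$ is bounded by $\tfrac14\int|\nabla\Phi_\ee|^2\chi_L^2+4\|\Phi_\ee\|_\infty^2\|\nabla\chi_L\|_{L^2}^2$.
\item $\int\chi_L^2\mob_\ee\nabla V\cdot\nabla\Phi_\ee$ is bounded by $\tfrac14\int|\nabla\Phi_\ee|^2\chi_L^2+\|\mob_\ee\|_\infty^2\|\nabla V\|_{L^2(B_{2L})}^2$.
\item $2\int\chi_L\Phi_\ee\mob_\ee\nabla V\cdot\nabla\chi_L$ is bounded by $\|\Phi_\ee\|_\infty\|\mob_\ee\|_\infty\bigl(\|\nabla V\|_{L^2(B_{2L})}^2+\|\nabla\chi_L\|_{L^2}^2\bigr)$.
\end{itemize}
Absorbing the two quarter terms and integrating over $(t_1,t_2)$, the time-independent bounds pick up a factor $(t_2-t_1)$, which yields \eqref{eq:Caccioppoli Phi local} with a constant of exactly the stated form.

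It remains to check that this constant does not depend on $R$ or $\ee$.
\begin{itemize}
\item The quantities $\|\nabla V\|_{L^2(B_{2L})}$ and $\|\nabla\chi_L\|_{L^2}$ involve neither $R$ nor $\ee$.
\item $\|\mob_\ee\|_{L^\infty}$ is uniformly bounded because $\mob_\ee\to\mob$ in $C([0,\alpha])$ by Lemma~\ref{lem:compatible regularisation}.
\item $\|\Phi_\ee(\rho^{R,\ee})\|_{L^\infty}\le\|\Phi_\ee\|_{C([0,\alpha])}$ is uniformly bounded because $\Phi_\ee\to\Phi$ in $C([0,\alpha])$, using that $\Phi$ is finite at the endpoints by \ref{hyp:Phi}.
\end{itemize}

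The only real subtlety is this uniformity in $\ee$. Its key inputs are that $\Phi$ is bounded on $[0,\alpha]$ (since $\mob U''\in L^1$) and the uniform convergence of both $\Phi_\ee$ and $\mob_\ee$. Note also that $\vartheta_\ee$ is normalised at $0$, so it is controlled by $\alpha\|\Phi_\ee\|_\infty$ without any sign assumption.
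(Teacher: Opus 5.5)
Your proposal is correct and is essentially the paper's argument: you differentiate $\int_{B_R}\vartheta_\ee(\rho^{R,\ee})\chi_L^2\,\dx$ (equivalently, test with $\Phi_\ee(\rho^{R,\ee})\chi_L^2$), absorb the cross terms by Young's inequality, bound $|\vartheta_\ee|\le\alpha\|\Phi_\ee\|_{L^\infty(0,\alpha)}$, and obtain uniformity in $R$ and $\ee$ from the uniform convergence of $\Phi_\ee$ and $\mob_\ee$. One cosmetic point: the terms $\int_{B_R}\vartheta_\ee(\rho^{R,\ee}_{t_i})\chi_L^2\,\dx$ involve $\Phi_\ee$ on all of $[0,\rho^{R,\ee}]$, so they are controlled by $\|\Phi_\ee\|_{C([0,\alpha])}$ rather than by $\|\Phi_\ee(\rho^{R,\ee})\|_{L^\infty}$, and your final uniformity step already uses that quantity.
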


\begin{proof}
We take into account the auxiliary flow \eqref{eq:theta}. We compute and we get 
\begin{align*}
 \frac{\diff}{\dt}\int_{B_R}
      \vartheta_\ee(\rho^{R,\ee})\chi_L^2\,\dx
 &=-\int_{B_R}|\nabla\Phi_\ee(\rho^{R,\ee})|^2\chi_L^2\,\dx\\
 &\quad-\int_{B_R}\mob_\ee(\rho^{R,\ee})
       \nabla\Phi_\ee(\rho^{R,\ee})\cdot\nabla V\,\chi_L^2\,\dx\\
 &\quad-2\int_{B_R}\Phi_\ee(\rho^{R,\ee})
       \nabla\Phi_\ee(\rho^{R,\ee})\cdot\chi_L\nabla\chi_L\,\dx\\
 &\quad-2\int_{B_R}\Phi_\ee(\rho^{R,\ee})
       \mob_\ee(\rho^{R,\ee})\nabla V\cdot
       \chi_L\nabla\chi_L\,\dx.
\end{align*}
Young's inequality applied to the second and third terms yields
\begin{align*}
 \frac12\int_{B_R}|\nabla\Phi_\ee(\rho^{R,\ee})|^2
       \chi_L^2\,\dx
 &\leq-\frac{\diff}{\dt}\int_{B_R}
       \vartheta_\ee(\rho^{R,\ee})\chi_L^2\,\dx\\
 &\quad+C\|\mob_\ee\|_{L^\infty(0,\alpha)}^2
       \|\nabla V\|_{L^2(B_{2L})}^2\\
 &\quad+C\|\Phi_\ee\|_{L^\infty(0,\alpha)}^2
       \|\nabla\chi_L\|_{L^2(\Rd)}^2\\
 &\quad+2\|\Phi_\ee\|_{L^\infty(0,\alpha)}
       \|\mob_\ee\|_{L^\infty(0,\alpha)}
       \|\nabla V\|_{L^2(B_{2L})}
       \|\nabla\chi_L\|_{L^2(\Rd)}.
\end{align*}
Since $0\leq\rho^{R,\ee}\leq\alpha$,
\[
 \left|\int_{B_R}\vartheta_\ee(\rho_t^{R,\ee})
       \chi_L^2\,\dx\right|
 \leq\alpha|B_{2L}|\|\Phi_\ee\|_{L^\infty(0,\alpha)}.
\]
Integration over $(t_1,t_2)$ proves
\eqref{eq:Caccioppoli Phi local}. The uniformity in $R$ and $\ee$ follows
from \eqref{eq:regularisation convergence}.
\end{proof}

We can do the same for time estimates. We provide uniform Caccioppoli estimates for the time derivative.

\begin{lemma}[Uniform local time Caccioppoli estimate]
    \label{lem:local time Caccioppoli estimate}
    Assume \ref{hyp:mobility}--\ref{hyp:Phi}. Let $\rho^{R,\ee}=S^{R,\ee}_\bullet\rho_0^R$ be the unique classical solution
    of \eqref{eq:saturation regularised} in the bounded domain $(t_1, t_2) \times B_R$. We have that 
    \begin{align}
     \|\partial_t(\rho^{R,\ee} \chi_L^2)\|_
     {L^2(t_1,t_2;H^{-1}(B_{2L}))}^2
     &\leq C_{L},
     \label{eq:Caccioppoli time local}
    \end{align}
    The constant $C_{L}$ is independent
    of $R>2L$ and $\ee\in(0,1]$.
\end{lemma}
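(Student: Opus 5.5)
The estimate will follow by duality, testing the equation against $\chi_L^2\psi$ and then using the spatial Caccioppoli bound of Lemma~\ref{lem:local spatial Caccioppoli estimate} to control the resulting flux term.

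Since $\chi_L$ does not depend on time, $\partial_t(\rho^{R,\ee}\chi_L^2)=\chi_L^2\,\partial_t\rho^{R,\ee}$. For a.e. $t\in(t_1,t_2)$ and every $\psi\in H^1_0(B_{2L})$ we compute, using \eqref{eq:saturation regularised}:
\[
 \langle \partial_t(\rho^{R,\ee}\chi_L^2),\psi\rangle
 =-\int_{B_R}\bigl(\nabla\Phi_\ee(\rho^{R,\ee})+\mob_\ee(\rho^{R,\ee})\nabla V\bigr)\cdot\nabla(\chi_L^2\psi)\,\dx .
\]
The boundary term on $\partial B_R$ vanishes for two reasons: $\chi_L^2\psi$ is supported in $B_{2L}\Subset B_R$ because $R>2L$, and the flux satisfies the no-flux condition \eqref{eq:regularised no flux} in any case. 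Expanding the test-function gradient gives $\nabla(\chi_L^2\psi)=\chi_L^2\nabla\psi+2\chi_L\psi\nabla\chi_L$.

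We then bound the two resulting pieces separately.

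The diffusive part is estimated by Cauchy--Schwarz and Poincaré's inequality on $B_{2L}$, which gives $\|\psi\|_{L^2(B_{2L})}\le C_P(L)\|\nabla\psi\|_{L^2(B_{2L})}$. Together with $0\le\chi_L\le1$ and the fact that $\nabla\chi_L$ is bounded, this yields
\[
 \left|\int\nabla\Phi_\ee(\rho^{R,\ee})\cdot\nabla(\chi_L^2\psi)\right|
 \le \bigl(1+2C_P\|\nabla\chi_L\|_{L^\infty}\bigr)\,\|\chi_L\nabla\Phi_\ee(\rho^{R,\ee})\|_{L^2}\,\|\psi\|_{H^1_0(B_{2L})}.
\]
The key point is to keep one factor $\chi_L$ next to $\nabla\Phi_\ee$, so that the weighted quantity of Lemma~\ref{lem:local spatial Caccioppoli estimate} appears.

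The drift part is bounded similarly by $C\|\mob_\ee\|_{L^\infty(0,\alpha)}\|\nabla V\|_{L^2(B_{2L})}(1+\|\nabla\chi_L\|_{L^\infty})\|\psi\|_{H^1_0}$. If $L^\infty$ control on $\nabla V$ is preferred, it is available because $V\in C^2$ gives $\nabla V\in L^\infty(B_{2L})$.

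Taking the supremum over $\|\psi\|_{H^1_0(B_{2L})}\le1$, squaring and integrating in time gives
\[
 \|\partial_t(\rho^{R,\ee}\chi_L^2)\|^2_{L^2(t_1,t_2;H^{-1}(B_{2L}))}
 \le C\int_{t_1}^{t_2}\!\!\int|\nabla\Phi_\ee(\rho^{R,\ee})|^2\chi_L^2\,\dx\,\dt
 + C(t_2-t_1)\|\mob_\ee\|^2_{L^\infty}\|\nabla V\|^2_{L^2(B_{2L})}.
\]
The first term is bounded by \eqref{eq:Caccioppoli Phi local}. The second is uniform in $\ee$ because $\mob_\ee\to\mob$ in $C([0,\alpha])$ by \eqref{eq:regularisation convergence}. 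Likewise, $\|\Phi_\ee\|_{L^\infty(0,\alpha)}$, which enters the constant of Lemma~\ref{lem:local spatial Caccioppoli estimate}, is uniform because $\Phi_\ee\to\Phi$ in $C([0,\alpha])$. None of the constants involves $R$, so the bound is independent of $R>2L$ and $\ee\in(0,1]$.

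The only delicate point is the first piece: it must be arranged so that only the $\chi_L$-weighted gradient of $\Phi_\ee(\rho^{R,\ee})$ appears. This works because the test function already carries the factor $\chi_L^2$. Everything else is routine.
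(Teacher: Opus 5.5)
Your proposal is correct and follows essentially the same route as the paper: test against $\chi_L^2\zeta$ with $\zeta\in H^1_0(B_{2L})$, expand $\nabla(\chi_L^2\zeta)$ so that only the $\chi_L$-weighted flux $\chi_L\bigl(\nabla\Phi_\ee(\rho^{R,\ee})+\mob_\ee(\rho^{R,\ee})\nabla V\bigr)$ appears, then close with \eqref{eq:Caccioppoli Phi local} and the $\ee$-uniform bounds on $\mob_\ee$ and $\Phi_\ee$. The only difference is cosmetic: your Poincaré step is unnecessary if, as in the paper, you measure $\zeta$ in the full $H^1(B_{2L})$ norm.
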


\begin{proof}
Set
\[
 J^{R,\ee}=\nabla\Phi_\ee(\rho^{R,\ee})
 +\mob_\ee(\rho^{R,\ee})\nabla V.
\]
For every $\zeta\in H_0^1(B_{2L})$, the weak formulation and
$\nabla(\chi_L^2\zeta)
=\chi_L^2\nabla\zeta+2\chi_L\zeta\nabla\chi_L$ give
\begin{align*}
 \left|\left\langle
  \partial_t(\rho^{R,\ee} \chi_L^2),\zeta
 \right\rangle_{H^{-1},H_0^1}\right|
 &\leq C_L\|J^{R,\ee} \chi_L\|_{L^2(B_{2L})}
          \|\zeta\|_{H^1(B_{2L})}.
\end{align*}
Moreover,
\begin{align*}
 \|J^{R,\ee} \chi_L\|_{L^2(B_{2L})}^2
 &\leq 2\int_{B_{2L}}
        |\nabla\Phi_\ee(\rho^{R,\ee})|^2\chi_L^2\,\dx
      +2\|\mob_\ee\|_{L^\infty(0,\alpha)}^2
        \|\nabla V\|_{L^2(B_{2L})}^2.
\end{align*}
Taking the dual norm, integrating in time, and applying
\eqref{eq:Caccioppoli Phi local} proves
\eqref{eq:Caccioppoli time local}. The constant is uniform in $R$ and
$\ee$ by \eqref{eq:regularisation convergence}.
\end{proof}

\subsection{The problem on balls $B_R$. The limit $\ee \rightarrow 0$}\label{sec:Limit ee to 0}

We will recall useful results from \cite{Carrillo_FJ_Gomez-Castro25} concerning the limit $\ee \to 0$ before taking the limit $R \to \infty$, i.e. on a fixed ball. The main novelty is that we will show stability of the Caccioppoli \textit{a priori} estimates. We start by stating \cite[Theorem 2.5]{Carrillo_FJ_Gomez-Castro25}.

\begin{lemma}[Convergence of the regularised problem]
    \label{lem:epsilon limit bounded ball}
    Assume \ref{hyp:mobility}--\ref{hyp:Phi} and fix $R>0$. Let us consider $\rho^{R, \ee}$ a solution to the regularised problem \eqref{eq:saturation regularised} with boundary condition \eqref{eq:regularised no flux} and $S^{R, \ee}$  its corresponding semigroup. Then, there exists a
    sequence $\ee_k\to0$ and a family
    $S^R=(S_t^R)_{t\geq0}$ on $\mathcal A_R$ such that, for every
    $\rho_0^R\in\mathcal A_R$ and $T>0$,
    \begin{equation}\label{eq:epsilon convergence bounded ball}
       S^{R,\ee_k}_\bullet\rho_0^R
       \longrightarrow S^R_\bullet\rho_0^R
       \quad\text{in }C([0,T];L^1(B_R))
       \quad\text{as }k\to+\infty.
    \end{equation}
    The trajectory $S^R_\bullet\rho_0^R$ is a weak solution of
    \eqref{eq:saturation} on $B_R$ with the no-flux boundary condition.
    Moreover, $S^R$ is a $C_0$-semigroup of $L^1(B_R)$-contractions and,
    for $0\leq t_1<t_2$,
    \begin{equation}\label{eq:bounded degenerate dissipation}
    \begin{split}
     \mathcal F_R[S_{t_2}^R\rho_0^R]
     &+\int_{t_1}^{t_2}\int_{B_R}
       \mob(\rho_t^R)
       |\nabla(U'(\rho_t^R)+V)|^2\,\dx\,\dt \leq\mathcal F_R[S_{t_1}^R\rho_0^R],
     \qquad \rho_t^R=S_t^R\rho_0^R.
    \end{split}
    \end{equation}
    It also satisfies the bounded-domain analogue of
    \eqref{eq:H-1 continuity}; hence $S^R$ is a \goodSemigroup{} on
    $B_R$.
\end{lemma}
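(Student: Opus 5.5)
This lemma is essentially \cite[Theorem 2.5]{Carrillo_FJ_Gomez-Castro25} applied on the ball $B_R$. The plan is therefore to check that the regularisation fixed in Lemma~\ref{lem:compatible regularisation} lies within the framework of that theorem, and then to rerun its compactness argument on the fixed bounded domain $B_R$. The only new ingredient in the regularisation is the uniform total-variation bound \eqref{eq:uniform regularised mobility variation}, which is additional information and does not interfere with the cited construction. Since $B_R$ is a smooth bounded domain and $V\in C^2(\overline{B_R})$, all hypotheses of the bounded-domain theory hold for each fixed $R$.

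\emph{Compactness for fixed $\rho_0^R$.} By \eqref{eq:regularised invariant region}, $0\le\rho^{R,\ee}\le\alpha$. Testing the regularised equation with $\Phi_\ee(\rho^{R,\ee})$ on the whole ball (the global version of Lemma~\ref{lem:local spatial Caccioppoli estimate}, with $\chi_L\equiv1$ and the boundary terms cancelled by \eqref{eq:regularised no flux}) gives a bound on $\nabla\Phi_\ee(\rho^{R,\ee})$ in $L^2((0,T)\times B_R)$ that is uniform in $\ee$. Similarly, $\partial_t\rho^{R,\ee}$ is bounded in $L^2(0,T;(H^1(B_R))^*)$.

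Because $\Phi_\ee\to\Phi$ uniformly on $[0,\alpha]$ and $\Phi$ is strictly increasing, a Kruzhkov or Aubin--Lions--Dubinskii type argument yields strong $L^1$ compactness of $\rho^{R,\ee}$, and hence of $\Phi_\ee(\rho^{R,\ee})\to\Phi(\rho^R)$. Here the quotient bound \eqref{eq:regularised ratio bound} is what controls the degeneracy of $\Phi'$ at $0$ and $\alpha$.

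Passing to the limit in the weak formulation uses $\mob_\ee\to\mob$ uniformly, so that $\mob_\ee(\rho^{R,\ee})\to\mob(\rho^R)$ strongly, together with weak convergence of $\nabla\Phi_\ee$.

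\emph{Uniform-in-time convergence and the semigroup.} To upgrade to convergence in $C([0,T];L^1(B_R))$, I will use the $L^1$-contraction \eqref{eq:regularised bounded contraction}. First approximate $\rho_0^R$ in $L^1$ by smooth data compatible with the boundary, for which time-equicontinuity in $L^1$ holds uniformly in $\ee$. Then use the contraction to transfer equicontinuity to general data. Arzelà--Ascoli then gives a subsequence converging in $C([0,T];L^1)$.

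A diagonal argument over a countable $L^1$-dense subset of $\mathcal A_R$ and over $T\in\mathbb N$ produces a single sequence $\ee_k$. By the uniform contraction, convergence extends to all of $\mathcal A_R$. The contraction property and the semigroup law $S^R_{t+h}=S^R_tS^R_h$ pass to the limit, using continuity of $S^{R,\ee_k}_t$ and uniform convergence. Continuity at $t=0$ follows from the equicontinuity.

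\emph{Energy inequality.} For \eqref{eq:bounded degenerate dissipation}, Lemma~\ref{lem:convergence regularised internal energies} gives $U_\ee\to U$ uniformly on $[0,\alpha]$, so $\mathcal F_{R,\ee_k}[\rho^{R,\ee_k}_t]\to\mathcal F_R[\rho^R_t]$ for every $t$. For the dissipation term, write
\[
\mob_\ee(\rho)\,|\nabla(U_\ee'(\rho)+V)|^2=\bigl|\mob_\ee(\rho)^{-1/2}\nabla\Phi_\ee(\rho)+\mob_\ee(\rho)^{1/2}\nabla V\bigr|^2 ,
\]
and use weak lower semicontinuity of this vector field in $L^2$. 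I expect this to be the main obstacle, because of the degeneracy where $\mob(\rho)=0$. I will handle it as in the cited work: identify the weak $L^2$ limit of $\mob_\ee(\rho^{R,\ee})^{-1/2}\nabla\Phi_\ee(\rho^{R,\ee})$ on sets where $\rho^R$ stays away from $0$ and $\alpha$, using local uniform convergence of $U_\ee''$, and then apply Fatou to exhaust these sets.

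The $H^{-1}$ estimate follows from the energy bound together with Cauchy--Schwarz on the flux, $\|J\|_{L^2}^2\le\|\mob\|_\infty\int\mob|\nabla(U'+V)|^2$.
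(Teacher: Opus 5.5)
Your reduction is the same as the paper's: apply \cite[Theorem 2.5]{Carrillo_FJ_Gomez-Castro25} with $\Omega=B_R$, using the approximation properties from Lemma~\ref{lem:compatible regularisation} and Lemma~\ref{lem:convergence regularised internal energies}. The gap is your assertion that ``all hypotheses of the bounded-domain theory hold for each fixed $R$''. They do not. The quotient condition in \ref{hyp:Phi} is imposed only on the half-intervals $[s_0,\alpha]$ and $[0,s_0]$, which is weaker than the assumption under which the cited theorem is proved. Checking that this weaker condition suffices is the only substantive content of the paper's proof.

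The missing observation is the following. The compactness argument of \cite[Lemma 3.14]{Carrillo_FJ_Gomez-Castro25} uses the quotient bound only through the derivatives of the two transformations $\Psi_1,\Psi_2$, namely
\[
 a_{1,\ee}(s)=[\Phi_\ee(\alpha)-\Phi_\ee(s)]\,[\Phi_\ee(s)]_+,
 \qquad
 a_{2,\ee}(s)=[\Phi_\ee(s)-\Phi_\ee(0)]\,[-\Phi_\ee(s)]_+ .
\]
What it needs is $|a_{i,\ee}|\leq C\Phi_\ee'$ on all of $[0,\alpha]$, uniformly in $\ee$. Since $\Phi_\ee(s_0)=0$ and $\Phi_\ee$ is increasing, $a_{1,\ee}\equiv 0$ on $[0,s_0]$ and $a_{2,\ee}\equiv 0$ on $[s_0,\alpha]$. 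Hence the half-interval bounds \eqref{eq:regularised ratio bound} give exactly the required global bound. Your remark that the quotient bound ``controls the degeneracy of $\Phi'$ at $0$ and $\alpha$'' points in this direction but never says where or how it enters. As written, the appeal to the cited theorem is therefore unjustified.

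If you instead intend the Kruzhkov/Alt--Luckhaus sketch as your own compactness proof, you must carry it out, and then the attribution is misplaced. On a fixed ball, $\Phi_\ee'\geq\min\{\Phi',\kappa(\ee)^{-1}\}\geq\min\{\Phi',4/\alpha\}$ gives a modulus of continuity for $\Phi_\ee^{-1}$ that is uniform in $\ee$. Combined with $\int_0^{T-h}\int_{B_R}\Delta_h\rho^{R,\ee}\,\Delta_h\Phi_\ee(\rho^{R,\ee})\,\dx\,\dt\leq Ch$ and the $L^2$ bound on $\nabla\Phi_\ee(\rho^{R,\ee})$, Kolmogorov--Riesz yields $L^1$ compactness without using the quotient bound at all. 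But then you are no longer rerunning the cited argument, and whatever else you borrow from it must be rechecked against the present hypotheses.

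Likewise, your claim that smooth compatible data have $L^1$ time-equicontinuity uniform in $\ee$ is not automatic. It requires a uniform $L^1$ bound on $\Delta\Phi_\ee(\eta)+\diver(\mob_\ee(\eta)\nabla V)$ for data satisfying the $\ee$-dependent no-flux condition; compare the construction needed in Lemma~\ref{lem:time core expanding balls}. On $B_R$ this can be arranged with data valued in $[\delta,\alpha-\delta]$ and a thin boundary-layer correction, but it has to be said.
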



We acknowledge that the regularity assumption on $\Phi$ presented in \eqref{eq:hypothesis Phi/Phi' ee = 0} is weaker than the corresponding assumption in \cite{Carrillo_FJ_Gomez-Castro25}. The proof presented here will only address this difficulty in order to explain its applicability to
the present assumptions and regularisation. 

\begin{proof}
We apply the proof of
\cite[Theorem 2.5]{Carrillo_FJ_Gomez-Castro25}
with $\Omega=B_R$. 

Although assumption \ref{hyp:Phi} imposes the quotient
bounds only on the corresponding half-intervals, these
are sufficient for the compactness argument in
\cite[Lemma 3.14]{Carrillo_FJ_Gomez-Castro25}.
Indeed, the two transformations used there involve
the factors
\[
 a_{1,\ee}(s)
 =[\Phi_\ee(\alpha)-\Phi_\ee(s)][\Phi_\ee(s)]_+,
 \qquad
 a_{2,\ee}(s)
 =[\Phi_\ee(s)-\Phi_\ee(0)][-\Phi_\ee(s)]_+.
\]
Since $\Phi_\ee(s_0)=0$ and $\Phi_\ee$ is increasing,
$a_{1,\ee}$ vanishes on $[0,s_0]$ and $a_{2,\ee}$
vanishes on $[s_0,\alpha]$. Consequently,
\eqref{eq:regularised ratio bound} gives precisely
\[
 |a_{i,\ee}(s)|\leq C\Phi_\ee'(s),
 \qquad i=1,2,
\]
uniformly in $\ee$ and $s$, as required there.

The remaining approximation properties used in the
cited proof are provided by
Lemma~\ref{lem:compatible regularisation} and
\eqref{eq:convergence regularised internal energies}.
Thus its compactness, contraction, and energy arguments
apply on the fixed ball $B_R$, yielding all the stated
conclusions.
\end{proof}

Once we have stated the convergence result of the regularised problem over balls from \cite{Carrillo_FJ_Gomez-Castro25} we proceed to discuss the stability of the local Caccioppoli estimates discussed in the previous subsection.

\begin{lemma}[Stability of the Caccioppoli estimates]
    \label{lem:degenerate local Caccioppoli estimates}
    Let 
    $\rho^R=S^R_\bullet\rho_0^R$ be the solution from Lemma \ref{lem:epsilon limit bounded ball}. For every
    $L>0$, $R>2L$, and $0\leq t_1<t_2 < +\infty$, it follows that
    \begin{align}
     \int_{t_1}^{t_2}\int_{B_{2L}}
     |\nabla\Phi(\rho^R)|^2\chi_L^2\,\dx\,\dt
     &\leq C_{L},
     \label{eq:Caccioppoli Phi degenerate local}\\
     \|\partial_t(\rho^R \chi_L^2)\|_
     {L^2(t_1,t_2;H^{-1}(B_{2L}))}
     &\leq C_{L},
     \label{eq:Caccioppoli time degenerate local}
    \end{align}
    where $C_{L}$ denotes the constants from Lemmas
    \ref{lem:local spatial Caccioppoli estimate} and
    \ref{lem:local time Caccioppoli estimate}.
\end{lemma}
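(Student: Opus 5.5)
The plan is to pass to the limit $\ee_k\to0$ in the uniform estimates of Lemmas~\ref{lem:local spatial Caccioppoli estimate} and~\ref{lem:local time Caccioppoli estimate}, using weak lower semicontinuity of norms. The constants $C_L$ there depend only on $\alpha$, $L$, $t_2-t_1$, $\sup_\ee\|\Phi_\ee\|_{L^\infty(0,\alpha)}$, $\sup_\ee\|\mob_\ee\|_{L^\infty(0,\alpha)}$, $\|\nabla V\|_{L^2(B_{2L})}$ and $\|\nabla\chi_L\|_{L^2}$. By \eqref{eq:regularisation convergence} these suprema are finite and independent of $\ee$, so the bounds hold uniformly along the sequence $\ee_k$ given by Lemma~\ref{lem:epsilon limit bounded ball}.

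\textbf{Spatial estimate.} Set $w_k=\chi_L\nabla\Phi_{\ee_k}(\rho^{R,\ee_k})$. By \eqref{eq:Caccioppoli Phi local}, $(w_k)$ is bounded in $L^2((t_1,t_2)\times B_{2L})$, so a subsequence converges weakly to some $w$. The limit is identified as follows.
\begin{itemize}
\item By \eqref{eq:epsilon convergence bounded ball}, $\rho^{R,\ee_k}\to\rho^R$ in $C([t_1,t_2];L^1(B_R))$, hence a.e. along a further subsequence.
\item Since $\Phi_{\ee_k}\to\Phi$ uniformly on $[0,\alpha]$ and $\Phi$ is continuous, $\Phi_{\ee_k}(\rho^{R,\ee_k})\to\Phi(\rho^R)$ a.e. These functions are uniformly bounded, so the convergence also holds in $L^2((t_1,t_2)\times B_{2L})$.
\item Write $w_k=\nabla\bigl(\chi_L\Phi_{\ee_k}(\rho^{R,\ee_k})\bigr)-\Phi_{\ee_k}(\rho^{R,\ee_k})\nabla\chi_L$ and pass to the limit in the distributional sense. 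This gives $w=\nabla(\chi_L\Phi(\rho^R))-\Phi(\rho^R)\nabla\chi_L=\chi_L\nabla\Phi(\rho^R)$.
\end{itemize}
This also shows $\Phi(\rho^R)\in L^2(t_1,t_2;H^1_{\mathrm{loc}})$ near $B_L$, consistent with Lemma~\ref{lem:epsilon limit bounded ball}. Weak lower semicontinuity of the $L^2$ norm then yields \eqref{eq:Caccioppoli Phi degenerate local} with the same constant. To keep this constant literally, take the $\limsup$ of $C_L(\ee_k)$, which converges by \eqref{eq:regularisation convergence}.

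\textbf{Time estimate.} The sequence $\partial_t(\rho^{R,\ee_k}\chi_L^2)$ is bounded in the Hilbert space $L^2(t_1,t_2;H^{-1}(B_{2L}))$ by \eqref{eq:Caccioppoli time local}, so a subsequence converges weakly to some limit. Since $\rho^{R,\ee_k}\chi_L^2\to\rho^R\chi_L^2$ in $L^1$, the distributional time derivatives converge in $\mathcal D'((t_1,t_2)\times B_{2L})$. The weak limit must therefore equal $\partial_t(\rho^R\chi_L^2)$, and lower semicontinuity of the norm gives \eqref{eq:Caccioppoli time degenerate local}.

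\textbf{Main obstacle.} The delicate step is identifying the weak limits, in particular the gradient term. Here $\nabla\Phi_\ee$ involves the varying nonlinearity, and $\Phi_\ee$ converges only in $C([0,\alpha])$, not in $C^1$ up to the endpoints. Writing the gradient distributionally, as $\nabla$ applied to the strongly convergent family $\Phi_{\ee_k}(\rho^{R,\ee_k})$, avoids any chain rule at the degenerate values $0$ and $\alpha$; I expect this to settle the identification.
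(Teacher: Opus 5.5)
Your proposal is correct and follows essentially the same route as the paper. The paper likewise takes weak limits using the $\varepsilon$-uniform bounds of Lemmas~\ref{lem:local spatial Caccioppoli estimate} and~\ref{lem:local time Caccioppoli estimate}, identifies them through the strong convergences $\Phi_{\varepsilon_k}(\rho^{R,\varepsilon_k})\to\Phi(\rho^R)$ and $\rho^{R,\varepsilon_k}\chi_L^2\to\rho^R\chi_L^2$ from Lemma~\ref{lem:epsilon limit bounded ball}, and concludes by weak lower semicontinuity, with your product-rule rewriting simply making the paper's distributional identification of the limit explicit.
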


\begin{proof}
We consider the sequence $\rho^{R, \ee_k}$ from Lemma \ref{lem:epsilon limit bounded ball}. 
We divide the proof into two steps.

\begin{enumeratesteps}
\step[Space estimates]
By Lemma \ref{lem:epsilon limit bounded ball}, in particular
\eqref{eq:epsilon convergence bounded ball},
$\rho^{R,\ee_k}\to\rho^R$ in $C([t_1,t_2];L^1(B_R))$. The uniform convergence
in \eqref{eq:regularisation convergence}, the uniform boundedness of
$\Phi_{\ee_k}$ on $[0,\alpha]$, and the uniform continuity of $\Phi$ give
\begin{equation}\label{eq:Phi epsilon strong local}
 \Phi_{\ee_k}(\rho^{R,\ee_k})
 \longrightarrow\Phi(\rho^R)
 \quad\text{strongly in }L^2( (t_1, t_2) \times B_{R}).
\end{equation}
Indeed, the convergence first holds in measure, while the invariant-region
estimate \eqref{eq:regularised invariant region} and
\eqref{eq:regularisation convergence} provide a uniform
$L^\infty((t_1, t_2) \times B_{R})$ bound.

Estimate \eqref{eq:Caccioppoli Phi local} makes
$\nabla\Phi_{\ee_k}(\rho^{R,\ee_k})\chi_L$ weakly relatively compact in
$L^2((t_1, t_2) \times B_{R})$. Therefore, from the Banach-Alaoglu Theorem there exists $G \in L^2( (t_1, t_2) \times B_R)$ such that (up to a subsequence)
\begin{equation*}
    \nabla\Phi_{\ee_k}(\rho^{R,\ee_k})\chi_L \rightharpoonup G \quad \text{weakly in } L^2( (t_1, t_2) \times B_R). 
\end{equation*}
Thanks to 
\eqref{eq:Phi epsilon strong local}, in the sense of
distributions on $(t_1, t_2) \times B_{R}$,
\[
 G=\chi_L\nabla\Phi(\rho^R).
\]
Consequently $\Phi(\rho^R)\in L^2(t_1,t_2;H^1(B_L))$, and weak lower
semicontinuity in \eqref{eq:Caccioppoli Phi local} yields
\eqref{eq:Caccioppoli Phi degenerate local}. 

\step[Time estimates]
By \eqref{eq:Caccioppoli time local} and the Banach--Alaoglu theorem,
there exists $z\in L^2(t_1,t_2;H^{-1}(B_{2L}))$ such that, up to a
subsequence,
\[
 \partial_t(\rho^{R,\ee_k} \chi_L^2)
 \rightharpoonup z
 \quad\text{weakly in }
 L^2(t_1,t_2;H^{-1}(B_{2L})).
\]
On the other hand, the convergence \eqref{eq:epsilon convergence bounded ball} implies
$\rho^{R,\ee_k} \chi_L^2 \to \rho^R \chi_L^2$ in
$C([0,t_2];L^1(B_{2L}))$. Testing the preceding weak convergence against
$C_c^\infty((t_1,t_2)\times B_{2L})$ therefore identifies
$z=\partial_t(\rho^R \chi_L^2)$. Weak lower semicontinuity in
\eqref{eq:Caccioppoli time local} proves
\eqref{eq:Caccioppoli time degenerate local} with the same constant.
\end{enumeratesteps}
\end{proof}

\subsection{The limit $R \to + \infty$. Local convergence}\label{sec:Existence local}

We devote this subsection to study the local convergence as we take expanding balls $B_R$ i.e. only with the set of assumptions \ref{hyp:mobility}--\ref{hyp:Phi} and before taking into account \ref{hyp:free energy bdd below}--\ref{hyp:growth of V}. In order to do that we start by 
defining the extended bounded-ball evolution
\begin{equation*}
    \mathcal S_t^{R,\ast} \rho_0
    \coloneqq\left(S_t^R\rho_0^R\right)\chi_{B_R}.
\end{equation*}
Since $S^R$ is $L^1$-contractive, due to Lemma \ref{lem:epsilon limit bounded ball} it follows that
\begin{equation}\label{eq:contraction R degenerate}
    \|\mathcal S_t^{R,\ast}\rho_0-\mathcal S_t^{R,\ast} \eta_0\|_{L^1(\Rd)}
    \leq\|\rho_0-\eta_0\|_{L^1(\Rd)}.
\end{equation}

Let us also introduce some nonlinear transformations already presented in
\cite[Lemma 3.14]{Carrillo_FJ_Gomez-Castro25}. 
With $s_0$ as in \eqref{eq:s0}, we define
\begin{align}
 \Psi_1(s)
 &=\int_{s_0}^{\max\{s,s_0\}}
   [\Phi(\alpha)-\Phi(\sigma)]\,[\Phi(\sigma)]_+
   \,\diff\sigma,
 \label{eq:Psi 1 degenerate definition}\\
 \Psi_2(s)
 &=\int_0^{\min\{s,s_0\}}
   [\Phi(\sigma)-\Phi(0)]\,[-\Phi(\sigma)]_+
   \,\diff\sigma.
 \label{eq:Psi 2 degenerate definition}
\end{align}
We take advantage of these two nonlinear transformations in order to formulate the resulting
compactness criterion locally and independently of the approximation
procedure. 

\begin{proposition}
    \label{lem:local nonlinear compactness}
    Assume \ref{hyp:mobility}--\ref{hyp:Phi}. Let
    $\Omega_k\subset\Rd$ be open sets such that, for every $L>0$,
    $B_{2L}\subset\Omega_k$ for all sufficiently large $k$, and let
    \[
        \rho_k\colon[0,+\infty)\times\Omega_k\longrightarrow[0,\alpha].
    \]
    Suppose that, for every $L>0$ and a cut-off
    $\chi_L\in C_c^\infty(B_{2L})$ satisfying $\chi_L=1$ on $B_L$,
    \begin{align}
        \sup_k\sum_{i=1}^2
        \|\Psi_i(\rho_k)\|_{L^2(0,T;H^1(B_L))}
        &<+\infty,
        \label{eq:abstract nonlinear spatial bound}\\
        \sup_k
        \|\partial_t(\rho_k \chi_L^2)\|_
        {L^2(0,T;H^{-1}(B_{2L}))}
        &<+\infty.
        \label{eq:abstract nonlinear time bound}
    \end{align}
    Then, there exist a subsequence, not relabelled, and a function
    $\rho\colon[0,+\infty)\times\Rd\to[0,\alpha]$ such that, for every
    $T, L>0$ and $1\leq p<+\infty$,
    \begin{align}
        \rho_k&\longrightarrow\rho
        &&\text{in }C([0,T];H^{-1}(B_L)),
        \\
        \rho_k&\longrightarrow\rho
        &&\text{strongly in }L^p((0,T)\times B_L).
    \end{align}
\end{proposition}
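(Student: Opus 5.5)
The plan is an Aubin--Lions type argument on each fixed ball, followed by a diagonal extraction over $L\in\mathbb N$ and $T\in\mathbb N$. Fix $L,T$. Since $0\le\rho_k\le\alpha$, the sequence is bounded in $L^\infty$. We first obtain compactness of the nonlinear quantities $\Psi_i(\rho_k)$, then transfer it to $\rho_k$ using the strict monotonicity of the $\Psi_i$. Separately, we get compactness in $C([0,T];H^{-1})$ from the time bound.

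\textbf{Strong compactness.} We use the compactness criterion of \cite[Lemma 3.14]{Carrillo_FJ_Gomez-Castro25}, localised to the cut-off $\chi_L$. Set $w_k=\rho_k\chi_L^2$; by \eqref{eq:abstract nonlinear time bound}, $\partial_t w_k$ is bounded in $L^2(0,T;H^{-1}(B_{2L}))$. For time translates we write
\[
\int_0^{T-h}\!\!\int \bigl(\Psi_i(\rho_k(t+h))-\Psi_i(\rho_k(t))\bigr)\bigl(\rho_k(t+h)-\rho_k(t)\bigr)\chi_{L/2}^2\,\dx\,\dt .
\]
Testing the increment of $w_k$ against $\Psi_i(\rho_k)\chi_{L/2}^2\in L^2H^1_0(B_L)$ (using $\chi_L=1$ on $B_L$) bounds this by $C h^{1/2}$. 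Since $\Psi_i$ is nondecreasing and Lipschitz on $[0,\alpha]$, we have $|\Psi_i(a)-\Psi_i(b)|^2\le \mathrm{Lip}\,(\Psi_i(a)-\Psi_i(b))(a-b)$. This gives equicontinuity in time of $\Psi_i(\rho_k)$ in $L^2((0,T)\times B_{L/2})$. Combined with the spatial $H^1$ bound \eqref{eq:abstract nonlinear spatial bound}, the Riesz--Fréchet--Kolmogorov theorem yields $\Psi_i(\rho_k)\to \psi_i$ strongly in $L^2((0,T)\times B_{L/2})$, along a subsequence. Replacing $L$ by $2L$ removes the loss of radius.

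\textbf{Recovering $\rho$ and the main obstacle.} The key step is to recover $\rho_k$ from $(\Psi_1(\rho_k),\Psi_2(\rho_k))$. By \ref{hyp:Phi} and Remark \ref{rem:U strict convex}, $\Phi$ is strictly increasing, with $\Phi>0$ on $(s_0,\alpha)$ and $\Phi<0$ on $(0,s_0)$. Hence the integrand defining $\Psi_1$ is positive on $(s_0,\alpha)$, so $\Psi_1$ is strictly increasing there and zero on $[0,s_0]$. Likewise $\Psi_2$ is strictly increasing on $(0,s_0)$ and constant afterwards. Therefore $\Psi_1+\Psi_2$ is a continuous strictly increasing map $[0,\alpha]\to\R$ with continuous inverse $\Theta$. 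Taking a further subsequence with a.e. convergence, $\rho_k=\Theta(\Psi_1(\rho_k)+\Psi_2(\rho_k))\to\rho:=\Theta(\psi_1+\psi_2)$ a.e. Dominated convergence with the bound $\alpha$ then gives convergence in every $L^p((0,T)\times B_L)$, $p<\infty$, and $0\le\rho\le\alpha$. A diagonal argument over $L,T\in\mathbb N$ makes the subsequence and the limit independent of $L,T$, and consistency of limits on nested balls defines $\rho$ on $[0,\infty)\times\Rd$.

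\textbf{$C([0,T];H^{-1})$ convergence.} For each $t$, the $w_k(t)$ are bounded in $L^2(B_{2L})$, which embeds compactly into $H^{-1}(B_{2L})$. They are also equi-Hölder in $H^{-1}$: $\|w_k(t)-w_k(s)\|_{H^{-1}}\le C|t-s|^{1/2}$. Arzelà--Ascoli therefore gives convergence in $C([0,T];H^{-1}(B_{2L}))$, and the limit is identified with $\rho\chi_L^2$ by the $L^p$ convergence. Restricting to test functions in $H^1_0(B_L)$, where $\chi_L=1$, gives $\rho_k\to\rho$ in $C([0,T];H^{-1}(B_L))$, with one more diagonal extraction. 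The hard step is the time-translate estimate: the time bound is only for the localised $\rho_k\chi_L^2$, so the pairing must be set up with nested cut-offs. If this direct pairing proves awkward, I would fall back on citing \cite[Lemma 3.14]{Carrillo_FJ_Gomez-Castro25} applied on $B_{2L}$ with the weight $\chi_L^2$.
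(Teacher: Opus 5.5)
Your proposal is correct and follows essentially the same route as the paper. Both arguments use Arzel\`a--Ascoli in $H^{-1}$ from the localised time bound. Both obtain a time-translate estimate for $\Psi_i(\rho_k)$ by pairing the increment of $\rho_k$ against the cut-off increment of $\Psi_i(\rho_k)$, using $|\Psi_i(a)-\Psi_i(b)|^2\le K_i(a-b)(\Psi_i(a)-\Psi_i(b))$. They then combine this with the spatial $H^1$ bound to get compactness, and recover $\rho_k$ by inverting the monotone transforms. The differences are cosmetic: you invert the single strictly increasing map $\Psi_1+\Psi_2$ rather than the paper's two branches $\varpi_1,\varpi_2$, and your cruder $Ch^{1/2}$ translate bound (the paper gets $Ch$) is still sufficient.
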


\begin{proof} 

Let us fix $T,L>0$. We divide the proof into several steps.

\begin{enumeratesteps}

\step[$H^{-1}$ precompactness]
Since $0\leq\rho_k\leq\alpha$,
\[
    \sup_k
    \|\rho_k\|_{L^\infty(0,T;L^2(B_{2L}))}
    \leq \alpha|B_{2L}|^{1/2}.
\]
Applying \eqref{eq:abstract nonlinear time bound} with
the cutoff $\chi_{2L}$, and testing with zero extensions
of functions in $H_0^1(B_{2L})$, gives
\[
    \sup_k
    \|\partial_t\rho_k\|_
    {L^2(0,T;H^{-1}(B_{2L}))}
    \leq C_{L}.
\]
Consequently,
\[
    \|\rho_k(t)-\rho_k(s)\|_{H^{-1}(B_{2L})}
    \leq C_{L}|t-s|^{1/2},
    \qquad s,t\in[0,T].
\]
Since $L^2(B_{2L})\Subset H^{-1}(B_{2L})$, the
Arzel\`a--Ascoli theorem provides a subsequence and a limit
$\rho$ such that
\[
    \rho_k\longrightarrow\rho
    \quad\text{in }C([0,T];H^{-1}(B_{2L})).
\]

\step[Application of the Aubin--Lions lemma]
We first establish strong compactness of the transformed
densities $\Psi_i(\rho_k)$.
Each $\Psi_i$ is nondecreasing and Lipschitz on
$[0,\alpha]$.
Indeed
\[
    0\leq \Psi_1'(s) = [\Phi (\alpha) - \Phi(s)] [\Phi (s)]_+ \leq 2 \| \Phi \|_{L^\infty (0,\alpha)}^2
\]
and $\Phi$ is bounded due to \ref{hyp:Phi} since $\mob U'' \in L^1(0,\alpha)$. 
Writing $K_i=\operatorname{Lip}(\Psi_i)$,
we therefore have
\[
    |\Psi_i(a)-\Psi_i(b)|^2
    \leq K_i(a-b)\bigl(\Psi_i(a)-\Psi_i(b)\bigr),
    \qquad a,b\in[0,\alpha].
\]
Moreover, 
for $0<h<T$, write $\Delta_h f(t)=f(t+h)-f(t)$.
The time-derivative bound established in Step~1 gives
\[
    \|\Delta_h\rho_k\|_
    {L^2(0,T-h;H^{-1}(B_{2L}))}
    \leq C_{L}h.
\]
Furthermore, \eqref{eq:abstract nonlinear spatial bound},
applied on $B_{2L}$, implies that
\[
    \chi_L^2\Delta_h\Psi_i(\rho_k)
\]
is uniformly bounded in
$L^2(0,T-h;H_0^1(B_{2L}))$, independently of $k$ and $h$.
Using the preceding scalar inequality, $\chi_L=1$ on
$B_L$, and duality, we obtain
\begin{align*}
    &\|\Delta_h\Psi_i(\rho_k)\|_
    {L^2((0,T-h)\times B_L)}^2\\
    &\quad\leq
    K_i\int_0^{T-h}
    \left\langle
        \Delta_h\rho_k,\,
        \chi_L^2\Delta_h\Psi_i(\rho_k)
    \right\rangle_{H^{-1}(B_{2L}),H_0^1(B_{2L})}
    \,\dt\\
    &\quad\leq
    C_{L}
    \|\Delta_h\rho_k\|_
    {L^2(0,T-h;H^{-1}(B_{2L}))}
    \leq C_{L}h.
\end{align*}
Thus the time translations of $\Psi_i(\rho_k)$ tend to
zero uniformly in $k$ in the local space--time $L^2$ norm.

Together with \eqref{eq:abstract nonlinear spatial bound}
and the compact embedding $H^1(B_L)\Subset L^2(B_L)$,
the time-translation form of the Aubin--Lions lemma gives
relative compactness of $\Psi_i(\rho_k)$ in
$L^2((0,T)\times B_L)$.
Up to a further subsequence, there exist $w_1,w_2$ such that
\[
    \Psi_i(\rho_k)\longrightarrow w_i
    \quad\text{strongly in }L^2((0,T)\times B_L)
    \quad\text{and almost everywhere},
    \qquad i=1,2.
\]

\step[Characterisation of the limit]
The strict monotonicity of $\Phi$ and definitions
\eqref{eq:Psi 1 degenerate definition}--\eqref{eq:Psi 2 degenerate definition} show that $\Psi_1$
is strictly increasing on $[s_0,\alpha]$ and constant on
$[0,s_0]$, whereas $\Psi_2$ is strictly increasing on
$[0,s_0]$ and constant on $[s_0,\alpha]$. 
Set
\[
    A_1=\Psi_1(\alpha)>0,
    \qquad
    A_2=\Psi_2(s_0)=\Psi_2(\alpha)>0,
\]
and define the continuous inverse branches
\[
    \varpi_1(r)
    =\bigl(\Psi_1|_{[s_0,\alpha]}\bigr)^{-1}(A_1r),
    \qquad
    \varpi_2(r)
    =\bigl(\Psi_2|_{[0,s_0]}\bigr)^{-1}(A_2r),
    \qquad r\in[0,1].
\]
By construction,
\[
    \varpi_1\!\left(\frac{\Psi_1(s)}{A_1}\right)
    =\max\{s,s_0\},
    \qquad
    \varpi_2\!\left(\frac{\Psi_2(s)}{A_2}\right)
    =\min\{s,s_0\}.
\]
Hence, for every $s\in[0,\alpha]$,
\[
    s=
    \varpi_1\!\left(\frac{\Psi_1(s)}{A_1}\right)
    +\varpi_2\!\left(\frac{\Psi_2(s)}{A_2}\right)
    -s_0.
\]
Applying this identity to $\rho_k$ and using Step~2
and continuity of $\varpi_i$ gives
\[
    \rho_k\longrightarrow
    \widetilde\rho
    \coloneqq
    \varpi_1\!\left(\frac{w_1}{A_1}\right)
    +\varpi_2\!\left(\frac{w_2}{A_2}\right)
    -s_0
    \quad\text{almost everywhere on }(0,T)\times B_L.
\]
Since $0\leq\rho_k\leq\alpha$, the same bound holds for
$\widetilde\rho$, and the Dominated Convergence Theorem yields
\[
    \rho_k\longrightarrow\widetilde\rho
    \quad\text{strongly in }L^p((0,T)\times B_L),
    \qquad 1\leq p<+\infty.
\]
This strong limit agrees distributionally with the limit
$\rho$ obtained in Step~1. Therefore
$\widetilde\rho=\rho$, proving both stated convergences.
Continuity of $\Psi_i$ also identifies $w_i=\Psi_i(\rho)$.

\end{enumeratesteps}

\end{proof}

This result can be applied to our particular case and leads to the following corollary. 

\begin{corollary}[Local convergence]
    \label{cor:R limit local solutions}
    Assume \ref{hyp:mobility}--\ref{hyp:Phi}. There exist a sequence
    $R_k\to+\infty$, independent of the initial datum, and maps
    $S_t\colon\mathcal A\to\mathcal A$ such that, for every
    $\rho_0\in\mathcal A$, $T,L>0$, and $1\leq p<+\infty$,
    \begin{align}
       \mathcal S^{R_k,\ast}_\bullet\rho_0
       &\longrightarrow S_\bullet\rho_0
       &&\text{in }C([0,T];H^{-1}(B_L)),
       \label{eq:R limit local H-1}\\
       \mathcal S^{R_k,\ast}_\bullet\rho_0
       &\longrightarrow S_\bullet\rho_0
       &&\text{strongly in }L^p((0,T)\times B_L).
       \label{eq:R limit local Lp}
    \end{align}
    Moreover, $S_0\rho_0=\rho_0$, the trajectory $S_\bullet\rho_0$ is a
    weak solution of \eqref{eq:saturation}, and
    $0\leq S_t\rho_0\leq\alpha$. The maps $S_t$ obey
    \eqref{eq:main local mass inequality} and
    \eqref{eq:main local contraction} for every $t\geq0$. Finally,
    for every $0\leq t_1<t_2\leq T$,
    \begin{align}
      \int_{t_1}^{t_2}\int_{B_{2L}}
      |\nabla\Phi(S_t\rho_0)|^2\chi_L^2\,\dx\,\dt
      &\leq C_{L},
      \label{eq:Caccioppoli Phi whole space}\\
      \|\partial_t(\chi_L^2S_\bullet\rho_0)\|_
      {L^2(t_1,t_2;H^{-1}(B_{2L}))}
      &\leq C_{L}.
      \label{eq:Caccioppoli time whole space}
    \end{align}
\end{corollary}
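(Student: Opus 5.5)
The plan is to apply Proposition~\ref{lem:local nonlinear compactness} to $\rho_k=\mathcal S^{R_k,\ast}_\bullet\rho_0$ restricted to $\Omega_k=B_{R_k}$. The only real subtlety is that the subsequence must be chosen independently of $\rho_0$; this will be handled with a separability/diagonal argument combined with the uniform contraction \eqref{eq:contraction R degenerate}.

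\textbf{Hypotheses of Proposition~\ref{lem:local nonlinear compactness}.}
\begin{itemize}
\item The time bound \eqref{eq:abstract nonlinear time bound} is exactly \eqref{eq:Caccioppoli time degenerate local}.
\item For the spatial bound \eqref{eq:abstract nonlinear spatial bound}, write $\Psi_i=\Theta_i\circ\Phi$, where
\[
\Theta_1'(r)=[\Phi(\alpha)-r][r]_+\big/\Phi'(\Phi^{-1}(r)).
\]
By \eqref{eq:hypothesis Phi/Phi' ee = 0}, $\Theta_i'$ is bounded, so $|\nabla\Psi_i(\rho^R)|\le C|\nabla\Phi(\rho^R)|$. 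Then \eqref{eq:Caccioppoli Phi degenerate local} gives a uniform $L^2(0,T;H^1(B_L))$ bound, and $L^\infty$ control comes for free.
\item For the chain rule at the degenerate level, I would either pass it through the regularised functions $\Psi_{i,\ee}$ using \eqref{eq:regularised ratio bound}, or invoke it directly, as in the bounded-domain theory.
\end{itemize}

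\textbf{Uniform choice of the subsequence.}
1. Fix a countable set $D\subset\mathcal A$ that is $L^1$-dense; for instance, rational simple functions on rational boxes truncated to $[0,\alpha]$.
2. For each datum in $D$ apply the proposition, and diagonalise over $D$, over $T,L\in\mathbb N$, and over $p$ (the case $p=1$ plus boundedness suffices). This yields one sequence $R_k$ along which $\mathcal S^{R_k,\ast}_\bullet\eta_0$ converges for every $\eta_0\in D$.
3. For general $\rho_0$, pick $\eta_0\in D$ close to it in $L^1$. By \eqref{eq:contraction R degenerate},
\[
\|\mathcal S^{R_k,\ast}_t\rho_0-\mathcal S^{R_k,\ast}_t\eta_0\|_{L^1}\le\|\rho_0-\eta_0\|_{L^1}
\]
uniformly in $k$ and $t$. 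An $\epsilon/3$ argument then shows the sequence is Cauchy in $C([0,T];L^1(B_L))$-weak, and in $L^1((0,T)\times B_L)$.
4. Since the functions take values in $[0,\alpha]$, $L^1$ convergence upgrades to $L^p$ by interpolation. The $C([0,T];H^{-1}(B_L))$ convergence follows from $L^1(B_L)\cap L^\infty\hookrightarrow H^{-1}(B_L)$, again by density together with the $H^{-1}$ convergence on $D$.
5. Define $S_t\rho_0$ as this limit.

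\textbf{Properties of the limit.}
\begin{itemize}
\item \emph{Contraction and mass inequality.} Lower semicontinuity of the $L^1(B_L)$ norm under a.e.\ convergence (for a.e.\ $t$, and then every $t$ via the $H^{-1}$ continuity) gives $\|S_t\rho_0-S_t\eta_0\|_{L^1(B_L)}\le\|\rho_0-\eta_0\|_{L^1}$. Letting $L\to\infty$ yields \eqref{eq:main local contraction}. Taking $\eta_0=0$, and noting $S_t0=0$ because $\mathcal S^{R,\ast}_t0=0$, gives \eqref{eq:main local mass inequality}.
\item \emph{Initial datum and range.} $S_0\rho_0=\rho_0$ follows from $\mathcal S^{R_k,\ast}_0\rho_0=\rho_0\chi_{B_{R_k}}$, and $0\le S_t\rho_0\le\alpha$ is preserved under a.e.\ limits.
\item \emph{Caccioppoli bounds.} Estimates \eqref{eq:Caccioppoli Phi whole space}--\eqref{eq:Caccioppoli time whole space} follow exactly as in Lemma~\ref{lem:degenerate local Caccioppoli estimates}: $\Phi(\rho_k)\to\Phi(\rho)$ strongly in $L^2_{\rm loc}$, the gradients converge weakly, and both norms are weakly lower semicontinuous.
\item \emph{Weak formulation.} For $\varphi\in C_c^\infty([0,T)\times\Rd)$ with support in $B_L$, take $R_k>L$, so the no-flux boundary is not seen. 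The bounded-ball weak formulation then passes to the limit: $\rho_k\to\rho$ strongly, $\mob(\rho_k)\to\mob(\rho)$ by continuity and dominated convergence, $\nabla\Phi(\rho_k)\rightharpoonup\nabla\Phi(\rho)$ weakly in $L^2$, and $\rho_0\chi_{B_{R_k}}=\rho_0$ on $\supp\varphi$.
\end{itemize}

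\textbf{Main obstacle.} I expect the hardest step to be the $\rho_0$-independence of the subsequence together with the identification of the limit for all $t$, not just a.e.\ $t$; this is where the contraction and the $H^{-1}$ equicontinuity have to be combined carefully. A secondary delicate point is justifying the chain-rule bound for $\Psi_i$ at the degenerate level, which I would secure through the $\ee$-regularisation.
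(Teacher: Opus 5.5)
Your proposal is correct and follows essentially the same route as the paper. You verify the hypotheses of Proposition~\ref{lem:local nonlinear compactness} through the Lipschitz bound on $\Psi_i\circ\Phi^{-1}$ coming from \eqref{eq:hypothesis Phi/Phi' ee = 0}, together with the stabilised Caccioppoli estimates. You then diagonalise over a countable dense subset of $\mathcal A$ and extend to all data through the uniform contraction \eqref{eq:contraction R degenerate}. Finally, you pass the contraction, the mass inequality, the Caccioppoli bounds and the weak formulation to the limit using lower semicontinuity and strong local convergence.

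Your version makes explicit some steps that the paper only states. One is the extension step through $\|f\|_{H^{-1}(B_L)}\le\|f\|_{L^2(B_L)}\le\|f\|_{L^\infty}^{1/2}\|f\|_{L^1}^{1/2}$. Another is obtaining the mass inequality from $S_t0=0$, where the paper instead uses lower semicontinuity and bounded-ball mass conservation.
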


\begin{proof}
Let $R_k\to+\infty$, fix $\rho_0\in\mathcal A$, and set
\[
    \Omega_k\coloneqq B_{R_k},
    \qquad
    u_k\coloneqq\mathcal S^{R_k,\ast}_\bullet\rho_0.
\]
We apply Proposition \ref{lem:local nonlinear compactness}
to the restrictions of $u_k$ to
$[0,+\infty)\times\Omega_k$.
Since $R_k\to+\infty$, every fixed ball $B_{2L}$ is
contained in $\Omega_k$ for all sufficiently large $k$. Let us fix $T,L>0$. In the following we check that the assumptions of Proposition \ref{lem:local nonlinear compactness} are fulfilled and we apply it.

\begin{enumeratesteps}

\step[Spatial compactness
\eqref{eq:abstract nonlinear spatial bound}]
The definitions of $\Psi_1$ and $\Psi_2$,
together with assumption \ref{hyp:Phi}, give
\[
    0\leq\Psi_i'(s)\leq C\Phi'(s),
    \qquad s\in(0,\alpha),\quad i=1,2.
\]
Integrating between two points and using strict monotonicity
of $\Phi$ shows that $\Psi_i\circ\Phi^{-1}$ is Lipschitz
on $\Phi([0,\alpha])$.
Since $\Phi(u_k)\in L^2(0,T;H^1(B_L))$, the Sobolev chain
rule yields
\[
    |\nabla\Psi_i(u_k)|
    \leq C|\nabla\Phi(u_k)|
    \quad\text{a.e. on }(0,T)\times B_L.
\]
Using boundedness of $\Psi_i$ on $[0,\alpha]$,
$\chi_L=1$ on $B_L$, and
\eqref{eq:Caccioppoli Phi degenerate local}, we obtain
\begin{align*}
    \sum_{i=1}^2
    \|\Psi_i(u_k)\|_{L^2(0,T;H^1(B_L))}^2
    &\leq
    T|B_L|\sum_{i=1}^2
    \|\Psi_i\|_{L^\infty(0,\alpha)}^2\\
    &\quad+
    C\int_0^T\int_{B_{2L}}
    \chi_L^2|\nabla\Phi(u_k)|^2\,\dx\,\dt\\
    &\leq C_{T,L}.
\end{align*}
The constant is independent of $k$, proving
\eqref{eq:abstract nonlinear spatial bound} from Proposition \ref{lem:local nonlinear compactness} for all
sufficiently large $k$.

\step[Time compactness
\eqref{eq:abstract nonlinear time bound}]
For $R_k>2L$, estimate
\eqref{eq:Caccioppoli time degenerate local} gives directly
\[
    \sup_{k:R_k>2L}
    \|\partial_t(\chi_L^2u_k)\|_
    {L^2(0,T;H^{-1}(B_{2L}))}
    \leq C_{T,L}.
\]
The zero extension does not affect this estimate, since
$\chi_L$ is compactly supported in $B_{2L}\subset\Omega_k$.
This proves \eqref{eq:abstract nonlinear time bound} from Proposition \ref{lem:local nonlinear compactness}.

\step[Application of Proposition
\ref{lem:local nonlinear compactness}]
Proposition \ref{lem:local nonlinear compactness} gives
both asserted local convergences along a subsequence for
each fixed initial datum.

A diagonal argument on a countable dense subset of
$\mathcal A$, followed by extension through the bounded-ball
contraction estimate and the saturation bound, makes the
sequence independent of the initial datum.
Up to a subsequence and relabelling, we can write
\[
    u_k^\eta=\mathcal S^{R_k,\ast}_\bullet\eta,
    \qquad
    u^\eta=S_\bullet\eta,
    \qquad \eta\in\mathcal A.
\]
The limits agree on overlapping balls and time intervals and hence they satisfy 
\begin{align*}
       u_k^\eta
       &\longrightarrow u^\eta
       &&\text{in }C([0,T];H^{-1}(B_L)),
       \\
       u_k^\eta
       &\longrightarrow u^\eta
       &&\text{strongly in }L^p((0,T)\times B_L).
    \end{align*}
In the following we prove the remaining properties of the constructed limit. 

\begin{enumeratesteps}

\step[$L^1$ bounds, $L^1$ contraction and weak solution]
Convergence in $H^{-1}(B_L)$
and uniform boundedness of the densities imply
\[
    u_k^\eta(t)\rightharpoonup u^\eta(t)
    \quad\text{weakly in }L^2(B_L)
\]
for every $\eta\in\mathcal A$. In particular,
$0\leq u^\eta(t)\leq\alpha$.
Weak lower semicontinuity of the $L^1(B_L)$ norm,
bounded-ball mass conservation, and
\eqref{eq:contraction R degenerate} give
\begin{align*}
    \|S_t\rho_0\|_{L^1(B_L)}
    &\leq
    \liminf_{k\to+\infty}
    \|u_k^{\rho_0}(t)\|_{L^1(B_L)}
    \leq \|\rho_0\|_{L^1(\Rd)},\\
    \|S_t\rho_0-S_t\eta_0\|_{L^1(B_L)}
    &\leq
    \liminf_{k\to+\infty}
    \|u_k^{\rho_0}(t)-u_k^{\eta_0}(t)\|_{L^1(B_L)} \leq \|\rho_0-\eta_0\|_{L^1(\Rd)}.
\end{align*}
Letting $L\to+\infty$ proves
\eqref{eq:main local mass inequality} and
\eqref{eq:main local contraction}.
Consequently, $S_t$ maps $\mathcal A$ into itself
for every $t\geq0$.

\step[Spatial Caccioppoli estimate \eqref{eq:Caccioppoli Phi whole space}]
Fix $\rho_0\in\mathcal A$ and write
$u_k=u_k^{\rho_0}$ and $u=u^{\rho_0}$.
For $0\leq t_1<t_2$ and $L>0$, continuity of $\Phi$,
strong local convergence, and the saturation bound give
\[
    \Phi(u_k)\longrightarrow\Phi(u)
    \quad\text{strongly in }
    L^2((t_1,t_2)\times B_{2L}).
\]
Applying the spatial estimate with a larger cutoff
if necessary bounds the gradients on $B_{2L}$.
The preceding strong convergence identifies their weak
limit, so
\[
    \chi_L\nabla\Phi(u_k)
    \rightharpoonup\chi_L\nabla\Phi(u)
    \quad\text{weakly in }
    L^2((t_1,t_2)\times B_{2L}).
\]
Weak lower semicontinuity and
\eqref{eq:Caccioppoli Phi degenerate local} therefore yield
\begin{align*}
    \int_{t_1}^{t_2}\int_{B_{2L}}
    \chi_L^2|\nabla\Phi(u)|^2\,\dx\,\dt
    &\leq
    \liminf_{k\to+\infty}
    \int_{t_1}^{t_2}\int_{B_{2L}}
    \chi_L^2|\nabla\Phi(u_k)|^2\,\dx\,\dt \leq C_L,
\end{align*}
which proves \eqref{eq:Caccioppoli Phi whole space}.

\step[Time Caccioppoli estimate \eqref{eq:Caccioppoli time whole space}]
Estimate \eqref{eq:Caccioppoli time degenerate local}
gives weak compactness of
$\partial_t(\chi_L^2u_k)$ in the space 
$L^2(t_1,t_2;H^{-1}(B_{2L}))$.
Strong local convergence of $u_k$ identifies every weak
limit distributionally as $\partial_t(\chi_L^2u)$.
Consequently, weak lower semicontinuity gives
\begin{align*}
    \|\partial_t(\chi_L^2u)\|_
    {L^2(t_1,t_2;H^{-1}(B_{2L}))}
    &\leq
    \liminf_{k\to+\infty}
    \|\partial_t(\chi_L^2u_k)\|_
    {L^2(t_1,t_2;H^{-1}(B_{2L}))} \leq C_L.
\end{align*}
This proves \eqref{eq:Caccioppoli time whole space}.

\step[Weak solution to \eqref{eq:saturation}]
Finally, continuity of the mobility $\mob$ and strong local convergence
give
\[
    \mob(u_k)\nabla V\longrightarrow\mob(u)\nabla V
    \quad\text{strongly in }L^2((0,T)\times B_L).
\]
Together with the weak gradient convergence established
above, this allows passage to the bounded-ball weak
formulation against compactly supported test functions.
Moreover, convergence in $C([0,T];H^{-1}(B_L))$ and
$u_k(0)=\rho_0\chi_{B_{R_k}}$ give
$S_0\rho_0=\rho_0$.
Thus $S_\bullet\rho_0$ is a weak solution of
\eqref{eq:saturation} with initial datum $\rho_0$.
\end{enumeratesteps}
\end{enumeratesteps}
\end{proof}

\subsection{The limit $R \to + \infty$. Global convergence}\label{sec:Existence global}

\label{sec:energy bdd below}

Finally, taking into account the extra compactness assumptions \ref{hyp:free energy bdd below}--\ref{hyp:growth of V} we can improve the convergence result to global. In order to do that we first need to explore the properties and convergence of the free-energy functional.  
We start by recovering a local formulation of
\cite[Lemma 3.9]{Carrillo_FJ_Gomez-Castro25} concerning the free-energy dissipation. For $0\leq s\leq\alpha$ and $z\in\mathbb R^d$, set
\begin{equation}\label{eq:dissipation integrand convention}
 \mathfrak D (s,z)
 \coloneqq
 \begin{cases}
  |z|^2/\mob(s),&\mob(s)>0,\\
  0,&\mob(s)=0\text{ and }z=0,\\
  +\infty,&\mob(s)=0\text{ and }z\neq 0
 \end{cases}
\end{equation}
Thus, for
$z=\nabla\Phi(\rho)+\mob(\rho)\nabla V$, the quantity
$\mathfrak D(\rho,z)$ is the rigorous interpretation of 
$\mob(\rho)|\nabla(U'(\rho)+V)|^2$.

\begin{lemma}[Lower semicontinuity of the dissipation]
 \label{lem:dissipation lower semicontinuity}
 Let $0\leq t_1<t_2<+\infty$ and let $Q\subset\Rd$ have finite
 measure. Suppose that $0\leq\rho_k,\rho\leq\alpha$ and
 \[
  \rho_k\longrightarrow\rho\quad\text{a.e. in }(t_1,t_2)\times Q,
  \qquad
  z_k\rightharpoonup z\quad\text{weakly in }
  L^2((t_1,t_2)\times Q;\mathbb R^d).
 \]
 Then
 \begin{equation}\label{eq:dissipation lower semicontinuity}
  \int_{t_1}^{t_2}\int_Q\mathfrak D(\rho,z)\,\dx\,\dt
  \leq\liminf_{k\to+\infty}
  \int_{t_1}^{t_2}\int_Q\mathfrak D(\rho_k,z_k)\,\dx\,\dt.
 \end{equation}
\end{lemma}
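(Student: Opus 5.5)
The idea is to use the standard lower semicontinuity result for convex integrands: write $\mathfrak D$ as a supremum of functions that are affine in $z$ with coefficients continuous in $s$, and then pass to the limit one test pair at a time. The function $\mathfrak D(s,z)$ is the perspective-type function $|z|^2/\mob(s)$. It is convex and lower semicontinuous in $z$ for each fixed $s$. Jointly in $(s,z)$ it is lower semicontinuous, because $\mob$ is continuous and nonnegative on $[0,\alpha]$: as $\mob(s_k)\to0$ with $z_k\to z\neq0$ the quotient blows up, and when $z=0$ the value $0$ is a lower bound. The first step is the dual representation
\[
 \mathfrak D(s,z)=\sup_{\xi\in\mathbb R^d}\Bigl(2\xi\cdot z-\mob(s)|\xi|^2\Bigr),
\]
which one checks case by case. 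If $\mob(s)>0$, optimise at $\xi=z/\mob(s)$. If $\mob(s)=0$, the expression is linear in $\xi$, so the supremum is $0$ when $z=0$ and $+\infty$ otherwise.

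Next, fix a countable dense set $\{\xi_j\}\subset\mathbb R^d$. By density and continuity in $\xi$ we have $\mathfrak D(s,z)=\sup_j(2\xi_j\cdot z-\mob(s)|\xi_j|^2)$. For any finite family of disjoint measurable sets $A_1,\dots,A_N\subset(t_1,t_2)\times Q$ and indices $j_1,\dots,j_N$, set $\psi=\sum_i\xi_{j_i}\chi_{A_i}\in L^\infty$. Then
\[
 \int\!\!\int\mathfrak D(\rho_k,z_k)\geq\int\!\!\int\bigl(2\psi\cdot z_k-\mob(\rho_k)|\psi|^2\bigr).
\]
Since $Q$ has finite measure, $\psi\in L^2$, so $\int 2\psi\cdot z_k\to\int2\psi\cdot z$ by weak convergence. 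Also $\mob(\rho_k)\to\mob(\rho)$ a.e. by continuity of $\mob$ on $[0,\alpha]$, and it is bounded by $\|\mob\|_{L^\infty}$. Dominated convergence, using $|\psi|^2\in L^1$, then gives $\int\mob(\rho_k)|\psi|^2\to\int\mob(\rho)|\psi|^2$. Hence
\[
 \liminf_k\int\!\!\int\mathfrak D(\rho_k,z_k)\geq\int\!\!\int\bigl(2\psi\cdot z-\mob(\rho)|\psi|^2\bigr).
\]

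The last step, which I expect to be the main technical point, is taking the supremum over such $\psi$ and showing it recovers $\int\!\!\int\mathfrak D(\rho,z)$. Define $f_n(t,x)=\max_{j\leq n}(2\xi_j\cdot z-\mob(\rho)|\xi_j|^2)$. This increases pointwise to $\mathfrak D(\rho,z)$, possibly $+\infty$ on the set where $\mob(\rho)=0$ and $z\neq0$. Choosing $A_i$ as the measurable set where index $j_i$ attains the maximum (with ties broken by the smallest index) gives an admissible $\psi$ with integrand exactly $f_n$. Taking $\xi_1=0$ gives $f_n\geq0$, so monotone convergence yields $\int\!\!\int f_n\uparrow\int\!\!\int\mathfrak D(\rho,z)$, including the case where this is $+\infty$. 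Note that $f_n\le 2|\xi|_{\max}|z|$ is integrable on the finite-measure set, so every integral above is well defined. Combining with the previous bound gives \eqref{eq:dissipation lower semicontinuity}.
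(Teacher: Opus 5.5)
Your proof is correct and follows the same route as the paper. Both arguments use the dual representation $\mathfrak D(s,z)=\sup_{\xi}\bigl(2\xi\cdot z-\mob(s)|\xi|^2\bigr)$, test with bounded vector fields, and pass to the limit via weak convergence of $z_k$ and dominated convergence for $\mob(\rho_k)$. Your countable dense set, argmax selection and monotone convergence step spells out what the paper compresses into ``taking the supremum over such fields and using truncation to recover the pointwise supremum.''
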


\begin{proof}
Since $Q$ has finite measure and $\rho_k$ is dominated by $\alpha$, the continuity of $\mob$ and the Dominated Convergence Theorem give
$\mob(\rho_k)\to\mob(\rho)$ strongly in every finite $L^p$ on $(t_1,t_2) \times Q$. From the definition of $\mathfrak D$ we observe that 
\[
 \mathfrak D(s,z)=\sup_{\xi\in\mathbb R^d}
 \bigl(2z\cdot\xi-\mob(s)|\xi|^2\bigr).
\]
Therefore, for every bounded measurable vector field $\xi$,
\begin{align*}
 \liminf_{k\to+\infty}
 \int_{t_1}^{t_2}\int_Q\mathfrak D(\rho_k,z_k)\,\dx\,\dt
 &\geq\int_{t_1}^{t_2}\int_Q
 \bigl(2z\cdot\xi-\mob(\rho)|\xi|^2\bigr)\,\dx\,\dt.
\end{align*}
Taking the supremum over such fields $\xi$ and using truncation to recover the
pointwise supremum proves \eqref{eq:dissipation lower semicontinuity}.
\end{proof}

Assumption \ref{hyp:free energy bdd below} prevents the negative internal
energy from cancelling the confinement term. Since $U$ is convex,
$U(0)=0$, and $U$ is continuous on $[0,\alpha]$, there is $C_U>0$ such
that
\begin{equation}\label{eq:positive energy linear control}
    [U(s)]_+\leq C_Us,\qquad 0\leq s\leq\alpha.
\end{equation}
Moreover, \eqref{eq:negative internal energy confinement} gives
\begin{equation}\label{eq:energy coercive lower bound}
    \mathcal F[\rho]
    \geq(1-\vartheta)\int_{\Rd}V\rho\,\dx-\|g\|_{L^1(\Rd)}.
\end{equation}
In particular, energy sublevels control the first confinement moment.
The proof below uses this control for mass tightness.

\begin{lemma}[Lower semicontinuity and recovery for expanding balls]
    \label{lem:energy expanding balls}
    Assume \ref{hyp:free energy bdd below}--\ref{hyp:growth of V}.
    Let $R_k\to+\infty$, let $\rho_k\in\mathcal A_{R_k}$, and consider
    $\rho_k^\ast$ its extension by zero. If $\rho\in\mathcal A$ and $\rho_k^*\to\rho$ in
    $L^1_{\mathrm{loc}}(\Rd)$, then
    \begin{equation}\label{eq:energy liminf R}
       \mathcal F[\rho]
       \leq\liminf_{k\to\infty}\mathcal F_{R_k}[\rho_k].
    \end{equation}
    Conversely, every $\rho\in\mathcal A_{\mathcal F}$ admits
    $\rho_k\in\mathcal A_{R_k}$ such that
    \begin{equation}\label{eq:energy recovery R}
       \rho_k^*\to\rho\quad\text{in }L^1(\Rd),\qquad
       \mathcal F_{R_k}[\rho_k]\to\mathcal F[\rho].
    \end{equation}
    The recovery sequence may be chosen to preserve the mass for all
    sufficiently large $k$.
\end{lemma}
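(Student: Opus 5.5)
For the lower bound \eqref{eq:energy liminf R}, I would shift the integrand so that it becomes nonnegative. Since $\mathcal F_{R_k}[\rho_k]=\mathcal F[\rho_k^*]$ (because $U(0)=0$), I would write
\[
\mathcal F_{R_k}[\rho_k]=\int_{\Rd}\bigl(U(\rho_k^*)+V\rho_k^*+g\bigr)\,\dx-\|g\|_{L^1(\Rd)} .
\]
By \eqref{eq:negative internal energy confinement} the integrand $f(x,s)=U(s)+V(x)s+g(x)$ is nonnegative, and it is continuous in $s$ because $U\in C([0,\alpha])$. We may assume the $\liminf$ is finite and pass to a subsequence realising it. From $L^1_{\mathrm{loc}}$ convergence I would extract a further subsequence with $\rho_k^*\to\rho$ a.e. on $\Rd$, using a diagonal argument over the balls $B_n$. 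Fatou's lemma then gives
\[
\int f(x,\rho)\,\dx\le\liminf_k\int f(x,\rho_k^*)\,\dx .
\]
By the convention stated after the definition of $\mathcal A_M$, the left-hand side minus $\|g\|_{L^1}$ equals $\mathcal F[\rho]$ (possibly $+\infty$), which proves the inequality.

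For the recovery \eqref{eq:energy recovery R}, I would first take the plain truncation $\tilde\rho_k=\rho|_{B_{R_k}}$. Then $\tilde\rho_k^*=\rho\chi_{B_{R_k}}\to\rho$ in $L^1(\Rd)$ by dominated convergence. Moreover
\[
\mathcal F_{R_k}[\tilde\rho_k]=\int_{B_{R_k}}\bigl(U(\rho)+V\rho\bigr)\,\dx\to\mathcal F[\rho]
\]
by dominated convergence, since $U(\rho),V\rho\in L^1$ for $\rho\in\mathcal A_{\mathcal F}$.

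Mass preservation is the only delicate point. The truncation loses mass $m_k=\int_{\Rd\setminus B_{R_k}}\rho\to0$, and this mass has to be reinserted inside a fixed ball at small energy cost and without violating $\rho\le\alpha$.

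\begin{itemize}
\item If $\rho\equiv0$, there is nothing to do, so assume $M>0$.
\item Since $\rho\le\alpha$, the set $\{\rho<\alpha\}$ has infinite measure; otherwise $M\ge\alpha\cdot\infty$. Hence there is a ball $B_{r}$ and $\delta>0$ such that $E=B_r\cap\{\rho\le\alpha-\delta\}$ has positive measure.
\item For $k$ large, with $R_k>r$ and $m_k\le\delta|E|$, I would set $\rho_k=\rho+\frac{m_k}{|E|}\chi_E$ on $B_{R_k}$. This lies in $\mathcal A_{R_k}$ and has mass exactly $M$.
\item The energy difference is bounded by
\[
\int_E\bigl|U(\rho+\lambda_k)-U(\rho)\bigr|\,\dx+\lambda_k\int_E V\,\dx,\qquad \lambda_k=\frac{m_k}{|E|}\to0 .
\]
The first term vanishes by uniform continuity of $U$ on $[0,\alpha]$ and $|E|<\infty$. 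The second vanishes because $V$ is bounded on $B_r$, by continuity.
\end{itemize}

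Hence $\mathcal F_{R_k}[\rho_k]\to\mathcal F[\rho]$ and $\rho_k^*\to\rho$ in $L^1$. The existence of a set $E$ with room below $\alpha$ is the main obstacle, and the infinite-measure argument above resolves it.
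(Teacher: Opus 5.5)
Your proposal is correct and follows essentially the same route as the paper. For the lower bound, both apply Fatou's lemma to the nonnegative shifted integrand $U(\rho_k^*)+V\rho_k^*+g$ along an a.e.-convergent subsequence. For the recovery, both use truncation plus a mass correction on a fixed bounded set. The only difference is cosmetic: you add the constant bump $\lambda_k\chi_E$ on a set $E\subset B_r$ where $\rho\le\alpha-\delta$, whereas the paper adds $\frac{a_k}{c_K}(\alpha-\rho)\chi_K$ with $c_K=\int_K(\alpha-\rho)\,dx>0$. Your infinite-measure argument also justifies that such a set exists, which the paper takes for granted.
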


\begin{proof}
Take a subsequence realizing the lower limit; if it is infinite, there is
nothing to prove. A further subsequence converges almost everywhere
locally. Since $U(0)=0$, the ball energy equals the energy of the zero
extension, and
\[
 U(\rho_k^*)+V\rho_k^*+g\geq0.
\]
Fatou's lemma on $\Rd$ proves \eqref{eq:energy liminf R}.

For recovery, truncation alone gives the convergence in
\eqref{eq:energy recovery R}. To preserve mass, choose a bounded set $K$
such that $c_K=\int_K(\alpha-\rho)\,\dx>0$. Put
$a_k=\int_{\Rd\setminus B_{R_k}}\rho\,\dx$ and, for large $k$ with
$K\subset B_{R_k}$ and $a_k\leq c_K$, set
\[
 \rho_k^*=\rho\chi_{B_{R_k}}
             +\frac{a_k}{c_K}(\alpha-\rho)\chi_K.
\]
This preserves the invariant region and the mass, and its $L^1$ error is
$2a_k\to0$. On $K$, the correction tends to zero uniformly, so uniform
continuity of $U$ and boundedness of $V$ give convergence of the corrected
energy. The omitted tails tend to zero by absolute integrability of
$U(\rho)$ and $V\rho$.
\end{proof}

We now use assumptions \ref{hyp:free energy bdd below}--\ref{hyp:growth of V}
to obtain uniform mass tightness for families with bounded mass and free energy.
This tail control will later allow us to upgrade local convergence to global
$L^1(\Rd)$ convergence.

\begin{lemma}[Mass tightness]
    \label{lem:uniform tightness omega}
    Assume \ref{hyp:free energy bdd below}--\ref{hyp:growth of V}. Let
    $\rho_j $ be a sequence in $\mathcal A$ with  uniformly bounded masses and
    free energies. Then
    \begin{equation}\label{eq:uniform confinement moment omega}
        \sup_j\int_{\Rd}\bigl(V\rho_j+|U(\rho_j)|\bigr)\,\dx<+\infty.
    \end{equation}
    If $\nu_L=\inf_{\Rd\setminus B_L}V$, then, for sufficiently large
    $L$,
    \begin{equation}\label{eq:tail from V moment}
       \sup_j\int_{\Rd\setminus B_L}\rho_j\,\dx
       \leq\frac C{\nu_L}\longrightarrow0.
    \end{equation}
    In particular,
    \begin{equation}\label{eq:uniform tightness omega}
       \lim_{L\to+\infty}\sup_j
       \int_{\Rd\setminus B_L}\rho_j\,\dx=0.
    \end{equation}
\end{lemma}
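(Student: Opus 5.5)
The plan is to derive everything from the coercive lower bound \eqref{eq:energy coercive lower bound} together with the pointwise controls \eqref{eq:positive energy linear control} and \eqref{eq:negative internal energy confinement}. Let $M^\ast\coloneqq\sup_j\|\rho_j\|_{L^1(\Rd)}$ and $F^\ast\coloneqq\sup_j\mathcal F[\rho_j]$, both finite by hypothesis. Bounded free energy forces $\rho_j\in\mathcal A_{\mathcal F}$, so all the integrals below are finite and may be split freely.

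\emph{First step: the moment bound \eqref{eq:uniform confinement moment omega}.} Applying \eqref{eq:energy coercive lower bound} to each $\rho_j$ gives
\[
(1-\vartheta)\int_{\Rd}V\rho_j\,\dx\leq F^\ast+\|g\|_{L^1(\Rd)},
\]
which bounds the confinement moments uniformly since $\vartheta<1$. For the internal energy I split $|U|=[U]_++[U]_-$. By \eqref{eq:positive energy linear control},
\[
\int_{\Rd}[U(\rho_j)]_+\,\dx\leq C_UM^\ast .
\]
By \eqref{eq:negative internal energy confinement},
\[
\int_{\Rd}[U(\rho_j)]_-\,\dx\leq\vartheta\int_{\Rd}V\rho_j\,\dx+\|g\|_{L^1(\Rd)},
\]
which is bounded by the previous line. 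Summing these three bounds gives \eqref{eq:uniform confinement moment omega}, with a constant $C$ depending only on $F^\ast$, $M^\ast$, $\vartheta$, $C_U$ and $\|g\|_{L^1}$.

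\emph{Second step: the tail estimate \eqref{eq:tail from V moment}.} By \ref{hyp:growth of V}, $\nu_L\to+\infty$, so $\nu_L>0$ once $L$ is large. For such $L$, since $V\geq0$ everywhere,
\[
\nu_L\int_{\Rd\setminus B_L}\rho_j\,\dx\leq\int_{\Rd\setminus B_L}V\rho_j\,\dx\leq\int_{\Rd}V\rho_j\,\dx\leq C .
\]
Dividing by $\nu_L$ gives \eqref{eq:tail from V moment} uniformly in $j$. Letting $L\to+\infty$ and using $\nu_L\to+\infty$ then yields \eqref{eq:uniform tightness omega}.

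I do not expect any step to be a real obstacle. The only point needing care is making sure the splitting $\mathcal F=\int U(\rho)+\int V\rho$ is legitimate. This is exactly the reason to use the normalised form $\int(U(\rho)+V\rho+g)-\int g$ with its nonnegative integrand, as discussed after the definition of $\mathcal A_{\mathcal F}$.
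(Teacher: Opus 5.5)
Your proposal is correct and follows the paper's proof essentially step for step. The coercive lower bound \eqref{eq:energy coercive lower bound} gives uniform control of $\int_{\Rd}V\rho_j\,\dx$, the estimates \eqref{eq:positive energy linear control} and \eqref{eq:negative internal energy confinement} bound $\int_{\Rd}|U(\rho_j)|\,\dx$, and the inequality $\nu_L\int_{\Rd\setminus B_L}\rho_j\,\dx\leq\int_{\Rd}V\rho_j\,\dx$ together with coercivity gives the tail estimates.
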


\begin{proof}
Estimate \eqref{eq:energy coercive lower bound} gives a uniform bound on
$\int V\rho_j$. Equations \eqref{eq:positive energy linear control} and
\eqref{eq:negative internal energy confinement} then give
\[
 \int_{\Rd}|U(\rho_j)|\,\dx
 \leq C_U\|\rho_j\|_1+\vartheta\int_{\Rd}V\rho_j\,\dx+\|g\|_1,
\]
which proves \eqref{eq:uniform confinement moment omega}. Finally,
\[
 \int_{\Rd\setminus B_L}\rho_j\,\dx
 \leq\frac1{\nu_L}\int_{\Rd}V\rho_j\,\dx,
\]
and coercivity proves the tail estimates.
\end{proof}

Let us now apply the uniform mass tightness to our \goodSemigroup{}.

\begin{lemma}
    \label{lem:tails degenerate ball semigroups}
    Assume \ref{hyp:free energy bdd below}--\ref{hyp:growth of V}. Then,
    for every $\rho_0\in\mathcal A$ and $T>0$,
    \begin{equation}\label{eq:uniform mass tails R}
       \lim_{L\to+\infty}\sup_{R>L}\sup_{t\in[0,T]}
       \int_{B_R\setminus B_L}S_t^R\rho_0^R\,\dx=0.
    \end{equation}
    If $\rho_0\in\mathcal A_{\mathcal F}$, then
    \begin{equation}\label{eq:uniform ball confinement moment}
       \sup_R\sup_{t\geq0}\int_{B_R}
       \bigl(V\rho_t^R+|U(\rho_t^R)|\bigr)\,\dx<+\infty,
       \qquad\rho_t^R=S_t^R\rho_0^R.
    \end{equation}
\end{lemma}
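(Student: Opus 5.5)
We prove the confinement moment \eqref{eq:uniform ball confinement moment} first, and then obtain the tail estimate \eqref{eq:uniform mass tails R} from it, together with an $L^1$-contraction approximation argument for general data.

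\emph{Bound for data of finite energy.} Let $\rho_0\in\mathcal A_{\mathcal F}$. By the dissipation inequality \eqref{eq:bounded degenerate dissipation} with $t_1=0$, we have $\mathcal F_R[\rho_t^R]\leq\mathcal F_R[\rho_0^R]$ for every $t\geq0$. Since $U(0)=0$, the ball energy $\mathcal F_R[\rho_0^R]$ coincides with the energy of $\rho_0\chi_{B_R}$. Bounding the integrand by its absolute value gives
\[
\mathcal F_R[\rho_0^R]\leq\int_{\Rd}\bigl(|U(\rho_0)|+V\rho_0\bigr)\dx<+\infty,
\]
and this bound is uniform in $R$. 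Next, extend $\rho_t^R$ by zero to $\rho_t^{R,*}$; again $U(0)=0$ ensures that $\mathcal F[\rho_t^{R,*}]=\mathcal F_R[\rho_t^R]$. The coercive lower bound \eqref{eq:energy coercive lower bound} then yields
\[
(1-\vartheta)\int V\rho_t^{R,*}\,\dx\leq\int\bigl(|U(\rho_0)|+V\rho_0\bigr)\dx+\|g\|_{1}.
\]
The masses are bounded by $\|\rho_0\|_1$ because $S^R$ conserves mass on the ball. Arguing exactly as in Lemma \ref{lem:uniform tightness omega}, using \eqref{eq:positive energy linear control} and \eqref{eq:negative internal energy confinement}, we also control $\int|U(\rho_t^R)|$. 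This proves \eqref{eq:uniform ball confinement moment}. Moreover, \eqref{eq:tail from V moment} gives
\[
\int_{B_R\setminus B_L}\rho^R_t\leq C/\nu_L
\]
uniformly in $R$ and $t$, so \eqref{eq:uniform mass tails R} holds for finite-energy data, even with $T=\infty$.

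\emph{General data.} For $\rho_0\in\mathcal A$ and $\delta>0$, choose $\eta_0\in\mathcal A_{\mathcal F}$ with $\|\rho_0-\eta_0\|_1<\delta$; for instance, the truncation $\eta_0=\rho_0\chi_{B_N}$ for large $N$ works. It lies in $\mathcal A_{\mathcal F}$ because $V$ is bounded on $B_N$ and $U$ is bounded on $[0,\alpha]$. By the $L^1(B_R)$-contraction of $S^R$ from Lemma \ref{lem:epsilon limit bounded ball},
\[
\int_{B_R\setminus B_L}S^R_t\rho_0^R\leq\int_{B_R\setminus B_L}S^R_t\eta_0^R+\delta\leq C_{\eta_0}/\nu_L+\delta .
\]
Taking first $\limsup_{L\to\infty}$ and then $\delta\to0$ gives \eqref{eq:uniform mass tails R}.

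The only delicate point is checking that the zero extension and restriction preserve the energy identities, which relies on $U(0)=0$ and $V\geq0$, and that the constants do not depend on $R$. Note that the constant $C_{\eta_0}$ depends on $N$, hence on $\delta$. This causes no difficulty, since $L\to\infty$ is taken before $\delta\to0$. The hypothesis $R>L$ plays no role beyond making the annulus meaningful.
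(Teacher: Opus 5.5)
Your proposal is correct and follows essentially the same route as the paper. For finite-energy data you combine the ball dissipation inequality \eqref{eq:bounded degenerate dissipation}, the coercive lower bound \eqref{eq:energy coercive lower bound} and the argument of Lemma~\ref{lem:uniform tightness omega}; for general data you compare, via $L^1$-contraction, with a truncated finite-energy approximant. The only difference is cosmetic: you get the tail bound directly for every $\rho_0\in\mathcal A_{\mathcal F}$ through the $R$-uniform estimate $\mathcal F_R[\rho_0^R]\leq\int_{\Rd}\bigl(|U(\rho_0)|+V\rho_0\bigr)\,\dx$, whereas the paper first treats compactly supported data and uses that uniform bound only for the moment estimate.
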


\begin{proof} We divide the proof in several steps. 
\begin{enumeratesteps}
\step[Compactly supported data]
Let $\eta\in\mathcal A_{\mathcal F}$ be compactly supported. For every
$R$ such that $B_R$ contains its support, mass conservation and
\eqref{eq:bounded degenerate dissipation} give
\begin{equation}\label{eq:ball energy sublevel estimate}
 \|S_t^R\eta\|_{L^1(B_R)}=\|\eta\|_{L^1(\Rd)},\qquad
 \mathcal F_R[S_t^R\eta]\leq\mathcal F[\eta].
\end{equation}
Because $U(0)=0$, the energy of the zero extension equals its
energy inside the ball $B_R$. Lemma \ref{lem:uniform tightness omega} therefore yields uniform
mass tails for this family, independently of $R$ and $t\geq0$.

\step[Arbitrary data]
Let $\rho_0\in\mathcal A$ and choose compactly supported
$\eta\in\mathcal A_{\mathcal F}$ with
$\|\rho_0-\eta\|_{L^1(\Rd)}<\delta$. For $L$ containing its support
and $R>L$, the contraction estimate
\eqref{eq:contraction R degenerate} gives
\begin{align}
 \int_{B_R\setminus B_L}S_t^R\rho_0^R\,\dx
 &\leq\|S_t^R\rho_0^R-S_t^R\eta\|_{L^1(B_R)}
       +\int_{B_R\setminus B_L}S_t^R\eta\,\dx \leq\delta+\int_{B_R\setminus B_L}S_t^R\eta\,\dx.
 \label{eq:mass tail contraction extension}
\end{align}
For fixed $\delta>0$, the approximation $\eta$ is fixed.
By \eqref{eq:ball energy sublevel estimate} and
\eqref{eq:tail from V moment}, the last integral in
\eqref{eq:mass tail contraction extension} tends to zero
as $L\to+\infty$, uniformly for $R>L$ and $t\in[0,T]$.
Taking the suprema in
\eqref{eq:mass tail contraction extension} therefore gives
a limit superior bounded by $\delta$.
Since $\delta>0$ is arbitrary, this proves
\eqref{eq:uniform mass tails R}.


For $\rho_0\in\mathcal A_{\mathcal F}$, we have that $\sup_R\mathcal F_R[\rho_0^R]<+\infty$ and hence
\begin{equation*}
 \mathcal F_R[S_t^R\rho_0^R]\leq\mathcal F_R[\rho_0^R] < + \infty .
\end{equation*}
Mass conservation and \eqref{eq:uniform confinement moment omega},
applied to the zero extensions, prove
\eqref{eq:uniform ball confinement moment}.
\end{enumeratesteps}
\end{proof}

The next lemma supplies uniform time modulus over a well-chosen dense subset of the admissible initial data $\mathcal{A}$. We need it in order to pass the
semigroup identity through the limit $R\to+\infty$. 

\begin{lemma}[Time equicontinuity]
    \label{lem:time core expanding balls}
    There exists a dense set
    $\mathcal K\subset\mathcal A$ of compactly supported finite-energy
    data with the following property. For each $\eta\in\mathcal K$ there
    are $R_\eta>0$ and $C_\eta>0$ such that
    \begin{equation}\label{eq:expanding ball core approximation}
       \|S_t^R\eta-S_s^R\eta\|_{L^1(B_R)}
       \leq C_\eta|t-s|
    \end{equation}
    for $R>R_\eta$ and all $s,t\geq0$.
\end{lemma}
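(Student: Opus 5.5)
The plan is to reduce the Lipschitz-in-time bound to an $L^1$ bound on the generator applied to a smooth datum, and to obtain that bound from the contraction property. We take $\mathcal K$ to be the set of $\eta\in C_c^\infty(\Rd)$ whose values stay a positive distance away from $0$ and $\alpha$ on their supports. Concretely, $\eta$ is either identically zero near a point or satisfies $\delta\le\eta\le\alpha-\delta$ on a neighbourhood of its support. The cleanest choice is $\eta=\alpha\theta_1-\ldots$; even simpler, we take $\mathcal K$ to be the set of $\eta$ such that $\Phi(\eta)$, $\mob(\eta)$ and $U'(\eta)$ are smooth with compact support. Density in $\mathcal A$ follows by mollifying, truncating, and then modifying the profile near the levels $0$ and $\alpha$. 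For the levels near $0$ we use $\Phi'(0^+)$, which exists in $[0,\infty]$ by \ref{hyp:Phi}, and compose with a smooth profile that is flat at $0$. Finiteness of the energy is immediate from compact support and boundedness.

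Fix $\eta\in\mathcal K$ and let $R_\eta$ be so large that $\supp\eta\subset B_{R_\eta-1}$. For $R>R_\eta$ the flux $\nabla\Phi(\eta)+\mob(\eta)\nabla V$ vanishes near $\partial B_R$, so
\[
 f_\eta\coloneqq\Delta\Phi(\eta)+\diver\bigl(\mob(\eta)\nabla V\bigr)\in L^1(\Rd)
\]
is independent of $R$. The key step is to show that, for the regularised problem,
\[
 \|S^{R,\ee}_h\eta-\eta\|_{L^1(B_R)}\le h\,\|f^\ee_\eta\|_{L^1}+o(1),
\]
where $f^\ee_\eta$ is defined with $\Phi_\ee,\mob_\ee$. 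Since $\eta\in\mathcal K$ avoids the endpoint modifications of Lemma \ref{lem:compatible regularisation} for small $\ee$, and $\Phi_\ee\to\Phi$ in $C^2_{\mathrm{loc}}((0,\alpha))$ with $\mob_\ee\to\mob$ in $C^1_{\mathrm{loc}}$, we have $f^\ee_\eta\to f_\eta$ uniformly. We then use the standard Crandall–Liggett/Kato argument for $L^1$-accretive operators. The stationary function $\eta$ solves the regularised equation with source $f^\ee_\eta$, and the $L^1$-contraction for the uniformly parabolic problem with sources (the $T$-contraction estimate obtained by testing with $\mathrm{sign}^+$ approximations, as in \cite{Carrillo_FJ_Gomez-Castro25}) gives
\[
 \|S^{R,\ee}_t\eta-\eta\|_{L^1}\le\int_0^t\|f^\ee_\eta\|_{L^1}\,ds .
\]
The main obstacle is justifying this source-term contraction for \eqref{eq:saturation regularised}. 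Our first attempt is Kružkov doubling, or simply subtracting the equations for $\rho^{R,\ee}$ and the constant-in-time $\eta$ and testing with $\mathrm{sgn}_\delta(\rho-\eta)$. The flux difference is handled with the Lipschitz bound on $\mob_\ee$ for fixed $\ee$, and the resulting term vanishes as $\delta\to0$ exactly as in the homogeneous contraction proof.

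Next we let $\ee_k\to0$ using \eqref{eq:epsilon convergence bounded ball}. This yields $\|S^R_h\eta-\eta\|_{L^1(B_R)}\le C_\eta h$ with $C_\eta=\|f_\eta\|_{L^1}+1$. Finally, we apply the semigroup property and the contraction of $S^R$ from Lemma \ref{lem:epsilon limit bounded ball}. For $s<t$,
\[
 \|S^R_t\eta-S^R_s\eta\|_{L^1}=\|S^R_s(S^R_{t-s}\eta)-S^R_s\eta\|_{L^1}\le\|S^R_{t-s}\eta-\eta\|_{L^1}\le C_\eta|t-s|,
\]
which is \eqref{eq:expanding ball core approximation} uniformly in $R>R_\eta$.
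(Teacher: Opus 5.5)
There is a genuine gap where you claim that $\eta\in\mathcal K$ ``avoids the endpoint modifications'' for small $\ee$, so that $f^\ee_\eta\to f_\eta$ uniformly. No nonzero compactly supported $\eta$ with $\Phi(\eta)$ continuous (as in your class) can do this. Indeed, $\eta=\Phi^{-1}(\Phi(\eta))$ is continuous and vanishes outside a compact set. It therefore takes every value in $(0,\max\eta)$ on a nonempty open set, in particular values in $(0,2\kappa(\ee))$. There the regularisation replaces $\mob$ by $\mob_\ee$, and $\Phi_\ee'$ is not uniformly close to $\Phi'$: the additive $\ee$ dominates when $\Phi'(0^+)=0$, and the truncation at $\kappa(\ee)^{-1}$ acts when $\Phi'(0^+)=+\infty$.

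Smoothness of $\Phi(\eta)$ then gives no uniform information on $\Phi_\ee(\eta)=(\Phi_\ee\circ\Phi^{-1})(\Phi(\eta))$, since $(\Phi_\ee\circ\Phi^{-1})'=\Phi_\ee'/\Phi'$ is uncontrolled near $\Phi(0)$. Since $\mob$ is only $C^1$ inside $(0,\alpha)$, nothing in your argument controls $\nabla\mob_\ee(\eta)$ near $\{\eta=0\}$ either. So the uniform $L^1$ bound on the generator, which is the entire content of the lemma, is not established. A telling sign is that you never use $\mob\in W^{1,1}(0,\alpha)$ or the variation bound \eqref{eq:uniform regularised mobility variation}, which were introduced precisely for this lemma.

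Your class $\mathcal K$ is also not well defined. Smooth, compactly supported and $\geq\delta$ on its support forces $\eta\equiv0$. Requiring $U'(\eta)$ to be compactly supported is impossible unless $U'(0^+)=0$; for instance it fails for $U(s)=s\log s$.

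The missing idea is to adapt the datum to each regularisation instead of fixing it. The paper fixes a smooth $w$ with $0\leq w<1$ and $M_w=\operatorname{ess\,sup}_{0<r<1}\mathcal H^{d-1}(\{w=r\})<\infty$. It sets $\eta=P^{-1}(w)$ and $\eta_\ee=P_\ee^{-1}(w)$, where $P,P_\ee$ are the normalised $\Phi,\Phi_\ee$. Then $\Phi_\ee(\eta_\ee)$ is exactly affine in $w$, so $\Delta\Phi_\ee(\eta_\ee)=(\Phi_\ee(\alpha)-\Phi_\ee(0))\Delta w$. The coarea formula gives $\|\nabla\mob_\ee(\eta_\ee)\|_{L^1}\leq M_w\|\mob_\ee'\|_{L^1(0,\alpha)}$. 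Both bounds are uniform in $\ee$ and $R$. Since $\eta_\ee\to\eta$ in $L^1$, the $L^1$-contraction transfers the Lipschitz bound from $S^{R,\ee_k}\eta_{\ee_k}$ to $S^R\eta$.

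With this replacement, your source-term contraction is a valid alternative to the paper's initial-derivative-plus-telescoping argument for getting $\|S^{R,\ee}_t\eta_\ee-\eta_\ee\|_{L^1}\leq t\|F_\ee\|_{L^1}$ at fixed $\ee$. Your final semigroup/contraction step coincides with the paper's. The step you flagged as the main obstacle is in fact the routine one.
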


\begin{proof} 
We divide the proof in several steps.

\begin{enumeratesteps}

\step[Construction of a suitable dense set of initial data $\mathcal{K} \subset \mathcal{A}$]
Set
\[
    P(s)=\frac{\Phi(s)-\Phi(0)}{\Phi(\alpha)-\Phi(0)},
    \qquad 0\leq s\leq\alpha.
\]
Since $\Phi$ is continuous and strictly increasing, $P$ is a
homeomorphism from $[0,\alpha]$ onto $[0,1]$.
Let $\mathcal W$ be the family of functions
$w\in C_c^\infty(\Rd)$ satisfying
\[
    0\leq w<1,
    \qquad
    M_w\coloneqq
    \operatorname*{ess\,sup}_{0<r<1}
    \mathcal H^{d-1}(\{w=r\})<+\infty,
\]
and define
\begin{equation*}
    \mathcal K=\{P^{-1}(w):w\in\mathcal W\}.
\end{equation*}
Thus the normalised diffusion profile $P(\eta)$ is smooth for every
$\eta\in\mathcal K$. 
Let us now first verify that $\mathcal K$ is dense in $\mathcal A$.
Given $\rho\in\mathcal A$, fix $L>0$, $0<\delta<\alpha$, and set
\[
    \widetilde\rho
    =\min\{\rho,\alpha-\delta\}\chi_{B_L}.
\]
Then $P(\widetilde\rho)$ is compactly supported and takes values
in $[0,P(\alpha-\delta)]$, with $P(\alpha-\delta)<1$. Afterwards, we approximate $P(\widetilde\rho)$ in measure by smooth functions
$w_j$, supported in a common bounded set and satisfying
\[
    0\leq w_j\leq P(\alpha-\delta)<1, \quad M_{w_j} < + \infty \quad \text{and} \quad w_j \longrightarrow P(\widetilde\rho)
    \text{ in measure}.
\]
Since $P^{-1}$ is uniformly continuous, we also have that
\[
    P^{-1}(w_j)\longrightarrow\widetilde\rho
    \quad\text{in measure}.
\]
These densities are bounded by $\alpha$ and supported in a common
bounded set, so the convergence also holds in $L^1(\Rd)$.
Thus, $\widetilde\rho$ belongs to the $L^1$-closure of $\mathcal K$.
Letting $L\to+\infty$ and $\delta\downarrow0$ proves that
$\mathcal K$ is dense in $\mathcal A$.

Every $\eta\in\mathcal K$ is compactly supported because
$P^{-1}(0)=0$. It also has finite energy, since $U(0)=0$,
$U$ is continuous on $[0,\alpha]$, and $V$ is locally bounded.





\step[Regularisation to get bounds]
Fix $w\in\mathcal W$ and write $\eta=P^{-1}(w)$.
For $0<\ee\leq1$, define
\begin{equation*}
    P_\ee(s)=
    \frac{\Phi_\ee(s)-\Phi_\ee(0)}
    {\Phi_\ee(\alpha)-\Phi_\ee(0)}.
\end{equation*}
The uniform convergence of $\Phi_\ee$ to $\Phi$ in
\eqref{eq:regularisation convergence} and strict monotonicity imply that
\begin{equation*}
\begin{aligned}
    P_\ee&\longrightarrow P
    &&\text{uniformly on }[0,\alpha],\\
    P_\ee^{-1}&\longrightarrow P^{-1}
    &&\text{uniformly on }[0,1].
\end{aligned}
\end{equation*}
Then, let us take 
\begin{equation*}
    \eta_\ee=P_\ee^{-1}(w).
\end{equation*}
All these functions have the same compact support
$K_w=\supp w$, and are such that
\begin{equation}\label{eq:regularised pressure core data convergence}
    \eta_\ee\longrightarrow\eta
    \quad\text{in }L^1(\Rd).
\end{equation}
This choice keeps the normalised diffusion profile equal to $w$.
By definition of $\eta_\ee$, it is such that
\begin{equation}\label{eq:pressure core diffusion identity}
\begin{aligned}
    \Phi_\ee(\eta_\ee)
    &=\Phi_\ee(0)
      +\bigl(\Phi_\ee(\alpha)-\Phi_\ee(0)\bigr)w,\\
    \Delta\Phi_\ee(\eta_\ee)
    &=\bigl(\Phi_\ee(\alpha)-\Phi_\ee(0)\bigr)\Delta w.
\end{aligned}
\end{equation}
Consequently, the diffusion term is uniformly bounded in $L^1(\Rd)$.
Regarding the drift term, the estimate on $\mob_\ee$ \eqref{eq:uniform regularised mobility variation}
gives
\begin{equation*}
    \int_0^\alpha|\mob_\ee'(s)|\,\diff s\leq C,
    \qquad 0<\ee\leq1.
\end{equation*}
Applying the coarea formula to $w$ and then changing variables
from $r$ to $s=P_\ee^{-1}(r)$, we obtain
\begin{align}
    \|\nabla(\mob_\ee(\eta_\ee))\|_{L^1(\Rd)}
    &=\int_0^\alpha |\mob_\ee'(s)|
      \mathcal H^{d-1}(\{w=P_\ee(s)\})\,\diff s \leq M_w\int_0^\alpha|\mob_\ee'(s)|\,\diff s
    \leq C_w.
    \label{eq:pressure mobility coarea estimate}
\end{align}
Moreover, $\mob_\ee(\eta_\ee)$ is supported in $K_w$, since
$\mob_\ee(0)=0$.
Let us now take into account 
\[
    F_\ee\coloneqq
    \Delta\Phi_\ee(\eta_\ee)
    +\diver\bigl(\mob_\ee(\eta_\ee)\nabla V\bigr),
\]
From \eqref{eq:pressure core diffusion identity} and \eqref{eq:pressure mobility coarea estimate} we get that 
\begin{align}
    \|F_\ee\|_{L^1(\Rd)}
    &\leq
    |\Phi_\ee(\alpha)-\Phi_\ee(0)|
    \|\Delta w\|_{L^1(\Rd)}\notag\\
    &\quad+
    \|\nabla V\|_{L^\infty(K_w)}
    \|\nabla(\mob_\ee(\eta_\ee))\|_{L^1(\Rd)}\notag\\
    &\quad+
    \|\Delta V\|_{L^\infty(K_w)}
    |K_w|\|\mob_\ee\|_{L^\infty(0,\alpha)}
    \leq C_\eta.
    \label{eq:pressure core uniform generator bound}
\end{align}
The constant is independent of $\ee$ and $R$: the diffusion and
mobility bounds are uniform in $\ee$, and the norms of $V$ are
taken only on the fixed compact set $K_w$.

\step[Time equicontinuity]
Choose $R_\eta>0$ such that $K_w\Subset B_{R_\eta}$, and fix
$R>R_\eta$.
Standard initial-time regularity for compatible data, as in the
bounded-domain theory used in
\cite[Section~3.1]{Carrillo_FJ_Gomez-Castro25}, gives
\begin{equation}\label{eq:pressure core generator derivative}
    \lim_{h\downarrow0}
    \frac{S_h^{R,\ee}\eta_\ee-\eta_\ee}{h} = \Delta\Phi_\ee(\eta_\ee)
    +\diver\bigl(\mob_\ee(\eta_\ee)\nabla V\bigr)
    =F_\ee
    \quad\text{in }L^1(B_R).
\end{equation} 
We now use contraction to propagate the initial derivative bound.
Fix $r>0$ and $N\in\mathbb N$, and set $h=r/N$.
The telescoping identity gives
\[
    S_r^{R,\ee}\eta_\ee-\eta_\ee
    =
    \sum_{j=0}^{N-1}
    \bigl(
        S_{(j+1)h}^{R,\ee}\eta_\ee
        -S_{jh}^{R,\ee}\eta_\ee
    \bigr).
\]
For each $j$, the semigroup property and the $L^1$-contraction 
\eqref{eq:regularised bounded contraction} imply
\begin{align*}
    \|S_{(j+1)h}^{R,\ee}\eta_\ee
      -S_{jh}^{R,\ee}\eta_\ee\|_{L^1(B_R)}
    &=
    \|S_{jh}^{R,\ee}(S_h^{R,\ee}\eta_\ee)
      -S_{jh}^{R,\ee}\eta_\ee\|_{L^1(B_R)} \leq
    \|S_h^{R,\ee}\eta_\ee-\eta_\ee\|_{L^1(B_R)}.
\end{align*}
Thus every time increment is bounded by the first one.
Taking norms in the telescoping identity and using the triangle
inequality therefore yields
\begin{align*}
    \|S_r^{R,\ee}\eta_\ee-\eta_\ee\|_{L^1(B_R)}
    &\leq
    N\|S_h^{R,\ee}\eta_\ee-\eta_\ee\|_{L^1(B_R)} =
    r\left\|
        \frac{S_h^{R,\ee}\eta_\ee-\eta_\ee}{h}
    \right\|_{L^1(B_R)}.
\end{align*}
Letting  $h\downarrow0$, and applying
\eqref{eq:pressure core generator derivative} and
\eqref{eq:pressure core uniform generator bound}, we obtain
\[
    \|S_r^{R,\ee}\eta_\ee-\eta_\ee\|_{L^1(B_R)}
    \leq r\|F_\ee\|_{L^1(B_R)}
    \leq C_\eta r.
\]
Hence, for $t\geq s$, this implies that
\begin{equation}\label{eq:regularised pressure core time modulus}
    \|S_t^{R,\ee}\eta_\ee-S_s^{R,\ee}\eta_\ee\|_{L^1(B_R)}
    \leq
    \|S_{t-s}^{R,\ee}\eta_\ee-\eta_\ee\|_{L^1(B_R)} \leq C_\eta|t-s|,
\end{equation}
with $C_\eta$ independent of $R>R_\eta$ and $\ee$. 
In order to finalise the proof, we take the sequence $\ee_k\to0$ from
Lemma \ref{lem:epsilon limit bounded ball} for this fixed $R$.
Contraction and
\eqref{eq:regularised pressure core data convergence} give
\begin{align*}
    &\sup_{t\in[0,T]}
    \|S_t^{R,\ee_k}\eta_{\ee_k}-S_t^R\eta\|_{L^1(B_R)} \\
    & \quad\leq \sup_{t\in[0,T]}
    \|S_t^{R,\ee_k}\eta_{\ee_k} - S_t^{R,\ee_k}\eta   \|_{L^1(B_R)}
    + \sup_{t\in[0,T]}\| S_t^{R,\ee_k}\eta-S_t^R\eta\|_{L^1(B_R)}\\
    &\quad\leq
    \|\eta_{\ee_k}-\eta\|_{L^1(B_R)}
    +\sup_{t\in[0,T]}
    \|S_t^{R,\ee_k}\eta-S_t^R\eta\|_{L^1(B_R)}
    \longrightarrow0.
\end{align*}
Passing to the limit in
\eqref{eq:regularised pressure core time modulus} proves
\eqref{eq:expanding ball core approximation} for $s,t\in[0,T]$.
Since $T>0$ is arbitrary, the estimate holds for all $s,t\geq0$,
with the same constant $C_\eta$ for every $R>R_\eta$.

\end{enumeratesteps}

\end{proof}

We are now ready to show global convergence as $R \to \infty$. First, we construct solutions and then we prove that the constructed solutions admit a \goodSemigroup{}.

\begin{lemma}[Global convergence]
    \label{lem:R limit global solutions energy}
    Assume \ref{hyp:mobility}--\ref{hyp:Phi} and
    \ref{hyp:free energy bdd below}--\ref{hyp:growth of V}. The sequence in
    Corollary \ref{cor:R limit local solutions} can be chosen so that, for
    every $\rho_0\in\mathcal A$ and $T>0$,
    \begin{align}\label{eq:R global L1 convergence}
       \mathcal S^{R_k,\ast}_\bullet\rho_0
       \longrightarrow S_\bullet\rho_0
       & \quad\text{in }C([0,T];L^1(\Rd)),
    \\ \label{eq:R global H-1 convergence}
       \mathcal S^{R_k,\ast}_\bullet\rho_0
       \longrightarrow S_\bullet\rho_0
       & \quad\text{in }C([0,T];H^{-1}(\Rd)).
    \end{align}
    Moreover,
    \begin{equation}\label{eq:mass final semigroup}
       \|S_t\rho_0\|_{L^1(\Rd)}=\|\rho_0\|_{L^1(\Rd)},
       \qquad 0\leq S_t\rho_0\leq\alpha.
    \end{equation}
\end{lemma}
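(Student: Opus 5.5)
The plan is to upgrade the local convergence of Corollary~\ref{cor:R limit local solutions} to global convergence by combining it with the uniform mass tails of Lemma~\ref{lem:tails degenerate ball semigroups}. We first handle data in the dense set $\mathcal K$ of Lemma~\ref{lem:time core expanding balls}, which gives a uniform-in-time modulus. We then pass to arbitrary $\rho_0\in\mathcal A$ through the $L^1$-contraction \eqref{eq:contraction R degenerate}. We keep the sequence $R_k$ from Corollary~\ref{cor:R limit local solutions}; no further extraction is needed, since the limit $S_\bullet\rho_0$ is already identified locally.

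\emph{Step 1 (pointwise-in-time global $L^1$ convergence).} Fix $\rho_0\in\mathcal A$, $T>0$ and write $u_k=\mathcal S^{R_k,\ast}_\bullet\rho_0$, $u=S_\bullet\rho_0$. For $t\in[0,T]$ and $L>0$ we split
\[
\|u_k(t)-u(t)\|_{L^1(\Rd)}\le \|u_k(t)-u(t)\|_{L^1(B_L)}+\int_{\Rd\setminus B_L}u_k(t)\,\dx+\int_{\Rd\setminus B_L}u(t)\,\dx .
\]
The second term is controlled uniformly in $k$ and $t\in[0,T]$ by \eqref{eq:uniform mass tails R}. The third term is controlled in the same way by weak lower semicontinuity on the annuli $B_{L'}\setminus B_L$, using the weak $L^2(B_{L'})$ convergence $u_k(t)\rightharpoonup u(t)$ from Corollary~\ref{cor:R limit local solutions}.

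The local term is the delicate one, because Corollary~\ref{cor:R limit local solutions} only gives $L^p$ convergence in space--time, not uniformly in $t$. To obtain uniformity in $t$, take first $\eta\in\mathcal K$. By \eqref{eq:expanding ball core approximation}, the curves $u_k^\eta$ are equi-Lipschitz in $C([0,T];L^1(B_L))$ for $R_k>R_\eta$. Combined with the space--time $L^1((0,T)\times B_L)$ convergence, this equi-Lipschitz bound upgrades the convergence to $C([0,T];L^1(B_L))$: average over short time windows $[t,t+h]$ and use the Lipschitz bound to compare the averages with the values at $t$. Hence $u^\eta_k\to u^\eta$ in $C([0,T];L^1(\Rd))$ for $\eta\in\mathcal K$.

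\emph{Step 2 (arbitrary data).} For general $\rho_0$, choose $\eta\in\mathcal K$ with $\|\rho_0-\eta\|_{L^1}<\delta$. The bounded-ball contraction \eqref{eq:contraction R degenerate} and the limit contraction \eqref{eq:main local contraction} give
\[
\sup_{t\le T}\|u_k-u\|_{L^1}\le 2\delta+\sup_{t\le T}\|u^\eta_k-u^\eta\|_{L^1}.
\]
This yields \eqref{eq:R global L1 convergence}. The invariant-region bound $0\le S_t\rho_0\le\alpha$ is already known from Corollary~\ref{cor:R limit local solutions}. Mass conservation \eqref{eq:mass final semigroup} then follows because each $u_k(t)$ has mass $\|\rho_0\chi_{B_{R_k}}\|_{L^1}\to\|\rho_0\|_{L^1}$, and this mass passes to the limit under global $L^1$ convergence.

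\emph{Step 3 ($H^{-1}$ convergence).} Since the densities lie in $[0,\alpha]$, global $L^1$ convergence implies convergence in $L^2(\Rd)$ by interpolation:
\[
\|f\|_{L^2}^2\le 2\alpha\|f\|_{L^1}\quad\text{for } |f|\le 2\alpha .
\]
This convergence is uniform in $t\in[0,T]$. The continuous embedding $L^2(\Rd)\hookrightarrow H^{-1}(\Rd)$ then gives \eqref{eq:R global H-1 convergence}.

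The main obstacle is Step~1: converting the local space--time convergence into uniform-in-time global convergence. This is exactly why the dense core $\mathcal K$ with its $R$-independent Lipschitz modulus, and the tail estimate \eqref{eq:uniform mass tails R} uniform in $R$ and $t$, are both required. One must also check that the tail estimate for $u$ itself follows from the weak limits on annuli.
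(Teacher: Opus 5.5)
Your proposal is correct and follows essentially the same route as the paper. The $R$-independent Lipschitz modulus on the dense core $\mathcal K$ upgrades the local space--time convergence of Corollary~\ref{cor:R limit local solutions} to uniform-in-time convergence. The uniform tails \eqref{eq:uniform mass tails R} then give global $L^1$ convergence (the paper uses Fatou for the limit's tail where you use weak limits on annuli), contraction extends this to arbitrary data, and mass conservation and $H^{-1}$ convergence follow exactly as you describe. Two differences are cosmetic: you apply the tail bound to $\eta\in\mathcal K$ and extend globally by contraction, whereas the paper extends locally first. Also, the one step you leave implicit in your averaging argument is routine: the limit $u^\eta$ inherits the Lipschitz modulus, by weak lower semicontinuity of $\|\cdot\|_{L^1(B_L)}$ under the pointwise-in-time weak $L^2(B_L)$ convergence.
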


\begin{proof} We divide the proof in several steps. 
\begin{enumeratesteps}
\step[Uniform-in-time local convergence]
Take $\mathcal{K} \subset \mathcal A$ the constructed dense set in Lemma \ref{lem:time core expanding balls}.
Fix $\eta\in\mathcal K$, $\eta$ is supported in $B_R$ for
all sufficiently large $R$, \eqref{eq:expanding ball core approximation} gives
a time modulus for $\mathcal S^{R,\ast}\eta$ which is independent of $R$:
\begin{equation*}
 \|\mathcal S_t^{R,\ast}\eta-\mathcal S_s^{R,\ast}\eta\|_{L^1(\Rd)}
 \leq C_\eta|t-s|.
\end{equation*}
Combining this equicontinuity in time with \eqref{eq:R limit local H-1} and
convergence \eqref{eq:R limit local Lp} imply
\begin{equation*}
  \mathcal S^{R_k,\ast}_\bullet\eta
  \longrightarrow S_\bullet\eta
  \quad\text{in }C([0,T];L^1(B_L))
\end{equation*}
for every $L>0$.

For arbitrary $\rho_0\in\mathcal A$, choose $\eta\in\mathcal K$. The
triangle inequality and the contraction estimates \eqref{eq:contraction R degenerate} and
\eqref{eq:main local contraction} give
\begin{align*}
 \sup_{t\in[0,T]}
 \|\mathcal S_t^{R_k,\ast}\rho_0-S_t\rho_0\|_{L^1(B_L)}
 \leq{}&2\|\rho_0-\eta\|_{L^1(\Rd)} +\sup_{t\in[0,T]}
 \|\mathcal S_t^{R_k,\ast}\eta-S_t\eta\|_{L^1(B_L)}.
\end{align*}
Hence, letting first $k\to\infty$ and then using the density of $\mathcal K$
proves
\begin{equation}\label{eq:R local continuous convergence}
  \mathcal S^{R_k,\ast}_\bullet\rho_0
  \longrightarrow S_\bullet\rho_0
  \quad\text{in }C([0,T];L^1_{\mathrm{loc}}(\Rd)).
\end{equation}

\step[Global convergence]
Set
\[
 a_L:=\sup_k\sup_{t\in[0,T]}
 \int_{\Rd\setminus B_L}\mathcal S_t^{R_k,\ast}\rho_0\,\dx.
\]
If $R_k\leq L$, the corresponding integral vanishes because
$\mathcal S_t^{R_k,\ast}\rho_0$ is extended by zero outside $B_{R_k}$; if
$R_k>L$, it is bounded by the quantity in
\eqref{eq:uniform mass tails R}. Consequently, $a_L\to0$ as
$L\to+\infty$. Therefore, up to a subsequence, 
Fatou's lemma implies that
\[
 \int_{\Rd\setminus B_L}S_t\rho_0\,\dx
 \leq\liminf_{k\to\infty}
 \int_{\Rd\setminus B_L}\mathcal S_t^{R_k,\ast}\rho_0\,\dx
 \leq a_L.
\]
The last bound is independent of $t$. Thereby
\begin{equation}\label{eq:R limit mass tails}
 \lim_{L\to+\infty}\sup_k\sup_{t\in[0,T]}
 \left(
  \int_{\Rd\setminus B_L}\mathcal S_t^{R_k,\ast}\rho_0\,\dx
  +\int_{\Rd\setminus B_L}S_t\rho_0\,\dx
 \right)=0.
\end{equation}
The combination of 
\eqref{eq:R local continuous convergence} and
\eqref{eq:R limit mass tails}, proves
\eqref{eq:R global L1 convergence}. Since both densities take values in
$[0,\alpha]$, for every $t\in[0,T]$,
\begin{align*}
 \|\mathcal S_t^{R_k,\ast}\rho_0-S_t\rho_0\|_{H^{-1}(\Rd)}
 &\leq
 \|\mathcal S_t^{R_k,\ast}\rho_0-S_t\rho_0\|_{L^2(\Rd)} \leq \alpha^{1/2}
 \|\mathcal S_t^{R_k,\ast}\rho_0-S_t\rho_0\|_{L^1(\Rd)}^{1/2}.
\end{align*}
Taking the supremum in time proves
\eqref{eq:R global H-1 convergence}. Since
\[
 \|\mathcal S_t^{R_k,\ast}\rho_0\|_{L^1(\Rd)}
 =\|\rho_0^{R_k}\|_{L^1(B_{R_k})},
\]
the global convergence \eqref{eq:R global L1 convergence} gives \eqref{eq:mass final semigroup}.

\end{enumeratesteps}
\end{proof}

Finally, we show that the obtained limit is a \goodSemigroup{} in the sense of Definition \ref{def:Free-energy dissipating semigroup}.

\begin{lemma}[Free-energy dissipating semigroup]
    \label{prop:successive limits}
    Under assumptions \ref{hyp:mobility}--\ref{hyp:Phi} and
    \ref{hyp:free energy bdd below}--\ref{hyp:growth of V}, the family
    $S=(S_t)_{t\geq0}$ obtained in Lemma
    \ref{lem:R limit global solutions energy} is a \goodSemigroup{} for
    \eqref{eq:saturation}.
\end{lemma}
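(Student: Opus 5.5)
We have to verify the four items of Definition \ref{def:Free-energy dissipating semigroup} for the limit family $S$. Item \ref{item:Semi-group weak solution} (weak solutions) and item \ref{it:S^eps is Lp continuous} (contraction) are already available: Corollary \ref{cor:R limit local solutions} gives the weak formulation and \eqref{eq:main local contraction}. Lemma \ref{lem:R limit global solutions energy} shows that the sequence can be chosen so that these properties persist. So the work is in the semigroup identity with strong continuity at $t=0$, and in the energy estimates \eqref{eq:free energy dissipation}--\eqref{eq:H-1 continuity}.

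\textbf{Semigroup property and continuity.} First I would use \eqref{eq:R global L1 convergence} at $t=0$ together with $S_\bullet\rho_0\in C([0,T];L^1(\Rd))$. The latter is a uniform limit of continuous curves, since each $\mathcal S^{R_k,\ast}_\bullet\rho_0$ is continuous because $S^{R_k}$ is a $C_0$-semigroup. This gives $\|S_t\rho_0-\rho_0\|_{L^1}\to0$ as $t\to0^+$. For $S_{t+h}=S_tS_h$, fix $\rho_0$ and write
\[
 \|S_{t+h}\rho_0-S_tS_h\rho_0\|_{L^1}
 \leq \|S_{t+h}\rho_0-\mathcal S^{R_k,\ast}_{t+h}\rho_0\|_{L^1}
 +\|\mathcal S^{R_k,\ast}_t(\mathcal S^{R_k,\ast}_h\rho_0)-\mathcal S^{R_k,\ast}_t(S_h\rho_0)\|_{L^1}
 +\|\mathcal S^{R_k,\ast}_tS_h\rho_0-S_tS_h\rho_0\|_{L^1}.
\]
The middle term rests on the identity $\mathcal S^{R,\ast}_{t+h}\rho_0=\mathcal S^{R,\ast}_t(\mathcal S^{R,\ast}_h\rho_0)$. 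This holds because the restriction to $B_R$ of the zero extension is the ball state itself, and $S^R$ is a semigroup on $B_R$. By \eqref{eq:contraction R degenerate}, the middle term is then bounded by $\|\mathcal S^{R_k,\ast}_h\rho_0-S_h\rho_0\|_{L^1}$. All three terms vanish as $k\to\infty$ by \eqref{eq:R global L1 convergence}, applied with the fixed subsequence (which is independent of the datum) to the data $\rho_0$ and $S_h\rho_0\in\mathcal A$.

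\textbf{Energy dissipation.} Take $\rho_0\in\mathcal A_{\mathcal F}$. I would not pass to the limit directly from $\rho_0^{R_k}$. Instead I would use the mass-preserving recovery sequence $\rho_{0,k}$ from Lemma \ref{lem:energy expanding balls}, which satisfies $\mathcal F_{R_k}[\rho_{0,k}]\to\mathcal F[\rho_0]$. By the ball contraction, $S^{R_k}_\bullet\rho_{0,k}$ has the same global $L^1$ limit $S_\bullet\rho_0$. Applying \eqref{eq:bounded degenerate dissipation} on $[0,t_2]$, and using Lemma \ref{lem:energy expanding balls} at time $t_2$ for the liminf of the energy, gives
\[
 \mathcal F[S_{t_2}\rho_0]+\int_0^{t_2}\!\!\int\mathfrak D\leq\mathcal F[\rho_0].
\]
Here the dissipation is handled by Lemma \ref{lem:dissipation lower semicontinuity} on each $B_L$, followed by $L\to\infty$. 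It applies because $z_k=\nabla\Phi(\rho_k)+\mob(\rho_k)\nabla V$ converges weakly in $L^2_{\mathrm{loc}}$, by the Caccioppoli bounds and the identification argument of Corollary \ref{cor:R limit local solutions}, while $\rho_k\to\rho$ a.e. To get a general initial time $t_1$, I would restart from $S_{t_1}\rho_0$, which has finite energy, and use the semigroup property.

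\textbf{$H^{-1}$ estimate and main obstacle.} For \eqref{eq:H-1 continuity}, I would test the weak formulation on $[t_1,t_2]$ with $\zeta\in H^1$. Cauchy--Schwarz gives $|\langle\rho_{t_2}-\rho_{t_1},\zeta\rangle|\leq\int\!\!\int \sqrt{\mob}\,|z|/\sqrt{\mob}\,|\nabla\zeta|$, hence a bound of the form $\|\mob\|_\infty^{1/2}(t_2-t_1)^{1/2}(\int\!\!\int\mathfrak D)^{1/2}\|\nabla\zeta\|_2$. Combined with the dissipation inequality, this yields the estimate.

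The main obstacle is the dissipation step. The energy must be lower semicontinuous at $t_2$ while being exactly recovered at $t_1$, which is why the recovery sequence is needed. There is also a subtle point: the limit of the flux must be identified as $z$ with $\mathfrak D(\rho,z)$ equal to $\mob(\rho)|\nabla(U'(\rho)+V)|^2$. I expect this to hold because $\mathfrak D$ is finite only where $z$ vanishes on $\{\mob(\rho)=0\}$.
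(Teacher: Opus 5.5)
Your proposal is correct and follows essentially the paper's argument: the semigroup law comes from the triangle inequality through the global $L^1$ convergence, and dissipation comes from lower semicontinuity of $\mathfrak D$ on balls, with the initial energy recovered, Lemma~\ref{lem:energy expanding balls} used at the final time, and a restart from $S_{t_1}\rho_0$; the $H^{-1}$ bound is Cauchy--Schwarz with the mobility weight. The only deviation is the mass-preserving recovery sequence, which is harmless but unnecessary: since $U(0)=0$ and $U(\rho_0),V\rho_0\in L^1(\Rd)$, the plain truncations already give $\mathcal F_{R_k}[\rho_0^{R_k}]\to\mathcal F[\rho_0]$ by dominated convergence, and this is what the paper uses.
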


\begin{proof}
We divide the proof into several steps
\begin{enumeratesteps}
\step[Semigroup properties]
Passing \eqref{eq:contraction R degenerate} through
\eqref{eq:R global L1 convergence} gives
\begin{equation}\label{eq:final contraction}
  \|S_t\rho_0-S_t\eta_0\|_{L^1(\Rd)}
  \leq\|\rho_0-\eta_0\|_{L^1(\Rd)}.
\end{equation}
For $t,h\geq0$, the triangle inequality gives
\begin{align*}
 \|S_{t+h}\rho_0-S_tS_h\rho_0\|_{L^1(\Rd)}
 \leq{}&\|S_{t+h}\rho_0-\mathcal S_{t+h}^{R_k,\ast}\rho_0\|_{L^1(\Rd)}\\
 &+\|\mathcal S_h^{R_k,\ast}\rho_0-S_h\rho_0\|_{L^1(\Rd)}\\
 &+\|\mathcal S_t^{R_k,\ast}(S_h\rho_0)-S_t(S_h\rho_0)\|_{L^1(\Rd)}.
\end{align*}
Every term tends to zero by
\eqref{eq:R global L1 convergence}; hence
\begin{equation}\label{eq:final semigroup identity}
   S_{t+h}\rho_0=S_tS_h\rho_0.
\end{equation}
The convergence in \eqref{eq:R global L1 convergence} is uniform on
$[0,T]$, and
$\mathcal S_0^{R_k,\ast}\rho_0=\rho_0\chi_{B_{R_k}}\to\rho_0$ in $L^1(\Rd)$.
It follows that
\begin{equation}\label{eq:final strong continuity}
   S_0\rho_0=\rho_0,
   \qquad
   \lim_{t\to0^+}\|S_t\rho_0-\rho_0\|_{L^1(\Rd)}=0.
\end{equation}

\step[Weak formulation and free-energy dissipation]
The weak formulation and invariant-region estimate have already been
obtained in Corollary \ref{cor:R limit local solutions}. Let
$\rho_0\in\mathcal A_{\mathcal F}$ and write $\rho_t=S_t\rho_0$.
We first pass the dissipation inequality from time zero. For the ball
solutions, put
\[
 J_k=\nabla\Phi(\rho^{R_k})+\mob(\rho^{R_k})\nabla V,
 \qquad J=\nabla\Phi(\rho)+\mob(\rho)\nabla V.
\]
The local estimates and strong convergence identify $J$ as the local
weak $L^2$ limit of $J_k$. Let us recall the free energy dissipation $\mathfrak{D}$ given by \eqref{eq:dissipation integrand convention}. Then, for each $t,L>0$, Lemma
\ref{lem:dissipation lower semicontinuity}, up to
a subsequence, gives
\begin{equation*}
 \int_0^t\int_{B_L}\mathfrak D(\rho_\sigma,J_\sigma)\,\dx\,\dsigma
 \leq\liminf_{k\to\infty}
 \int_0^t\int_{B_{R_k}}
       \mathfrak D(\rho_\sigma^{R_k},J_k)\,\dx\,\dsigma.
\end{equation*}
At the initial time, absolute integrability and $U(0)=0$ give
$\mathcal F_{R_k}[\rho_0^{R_k}]\to\mathcal F[\rho_0]$.
At the final time, \eqref{eq:R global L1 convergence} and Lemma
\ref{lem:energy expanding balls} give lower semicontinuity. Since the
energies are bounded below by $-\|g\|_1$, the lower limits of energy and
dissipation may be combined in
\eqref{eq:bounded degenerate dissipation}.
We obtain
\[
 \mathcal F[\rho_t]
 +\int_0^t\int_{B_L}
      \mathfrak D(\rho_\sigma,J_\sigma)\,\dx\,\diff\sigma
 \leq\mathcal F[\rho_0].
\]
Letting $L\to+\infty$ gives the same inequality on $\Rd$.
In particular, $\mathcal F[\rho_t]<+\infty$; the energy
convention and \eqref{eq:energy coercive lower bound} imply
that $\rho_t\in\mathcal A_{\mathcal F}$ for every $t\geq0$.

For $0\leq t_1<t_2$, apply this whole-space inequality
with initial datum $S_{t_1}\rho_0$ and time $t_2-t_1$.
By \eqref{eq:final semigroup identity}, we have
$S_s(S_{t_1}\rho_0)=\rho_{t_1+s}$.
Changing variables $t=t_1+s$ therefore gives
\begin{equation}\label{eq:energy after R limit}
 \mathcal F[\rho_{t_2}]
 +\int_{t_1}^{t_2}\int_{\Rd}
      \mathfrak D(\rho_t,J_t)\,\dx\,\dt
 \leq\mathcal F[\rho_{t_1}].
\end{equation}





\step[Time estimate]
Since
$\partial_t\rho=\diver J$ and
$|J|^2\leq\|\mob\|_{L^\infty(0,\alpha)}
\mob(\rho)|\nabla(U'(\rho)+V)|^2$, estimate
\eqref{eq:energy after R limit} gives
\begin{align*}
 \|\rho_{t_2}-\rho_{t_1}\|_{H^{-1}(\Rd)}
 \leq{}&\|\mob\|_{L^\infty(0,\alpha)}^{1/2}
 |t_2-t_1|^{1/2} \bigl(\mathcal F[\rho_{t_1}]
       -\mathcal F[\rho_{t_2}]\bigr)^{1/2},
\end{align*}
which is \eqref{eq:H-1 continuity}. Together with
\eqref{eq:final contraction}, \eqref{eq:final semigroup identity},
\eqref{eq:final strong continuity}, and \eqref{eq:mass final semigroup},
this proves the lemma.
\end{enumeratesteps}
\end{proof}

Corollary \ref{cor:R limit local solutions} proves 
\ref{item:local existence theorem}. Under the extra assumptions
\ref{hyp:free energy bdd below}--\ref{hyp:growth of V}, Lemmas
\ref{lem:tails degenerate ball semigroups}--\ref{prop:successive limits}
prove \ref{item:global existence theorem}.

\section{Long-time behaviour}\label{sec:omega limit}

In this section we prove Theorem \ref{thm:Asymptotic (P0)}. 
We study the long-time behaviour of the solutions. First, in Subsection \ref{sec:Local convergence in time}, under the standard assumptions \ref{hyp:mobility}--\ref{hyp:Phi}, we study local convergence in time. Afterwards, in Subsection \ref{sec:Global convergence in time}, adding the extra assumptions 
\ref{hyp:free energy bdd below}--\ref{hyp:growth of V} that prevent the loss of mass we upgrade the convergence to global and we describe the \timeLimit{} $S_\infty$. Finally, in Subsection \ref{sec:Energy Minimiser}, we study the energy minimisers of the free-energy functional and their role in the $\omega$-limit. 

\subsection{Local compactness of time translates}\label{sec:Local convergence in time}

First, we show local convergence through a sequence of solutions in the time intervals $[t_n, T + t_n]$, where $t_n$ is a sequence of times such that $t_n \to + \infty$. The result will follow as a consequence of Proposition \ref{lem:local nonlinear compactness}.


\begin{proof}[Proof of \ref{item:local omega compactness}]
Write $\rho^{(n)}_s=S_{t_n+s}\rho_0$. We divide the proof into several steps.


\begin{enumeratesteps}

\step[Spatial estimate
\eqref{eq:abstract nonlinear spatial bound}]
Applying \eqref{eq:Caccioppoli Phi whole space}
on $(t_n,t_n+T)$ gives
\begin{equation}\label{eq:Caccioppoli translates space}
    \sup_n\int_0^T\int_{B_{2L}}
    \chi_L^2|\nabla\Phi(\rho_s^{(n)})|^2\,\dx\,\ds
    \leq C_{L}.
\end{equation}
The constant is independent of $n$, since the Caccioppoli
estimate is uniform in the initial time of the interval.

As in the proof of Corollary
\ref{cor:R limit local solutions}, assumption
\ref{hyp:Phi} implies that $\Psi_i\circ\Phi^{-1}$
is Lipschitz. The Sobolev chain rule therefore yields
\[
    |\nabla\Psi_i(\rho^{(n)})|
    \leq C|\nabla\Phi(\rho^{(n)})|
    \quad\text{a.e. on }(0,T)\times B_L.
\]
Together with boundedness of $\Psi_i$ on $[0,\alpha]$
and \eqref{eq:Caccioppoli translates space}, this gives
\begin{equation*}
    \sup_n\sum_{i=1}^2
    \|\Psi_i(\rho^{(n)})\|_{L^2(0,T;H^1(B_L))}
    <+\infty.
\end{equation*}
Thus \eqref{eq:abstract nonlinear spatial bound} from Proposition \ref{lem:local nonlinear compactness} holds.

\step[Time estimate
\eqref{eq:abstract nonlinear time bound}]
Applying \eqref{eq:Caccioppoli time whole space}
on $(t_n,t_n+T)$ and translating the time variable gives
\begin{equation*}
    \sup_n\int_0^T
    \|\partial_s(\chi_L^2\rho_s^{(n)})\|_
    {H^{-1}(B_{2L})}^2\,\ds
    \leq C_{L}.
\end{equation*}
Again, the constant is independent of $n$.
This is \eqref{eq:abstract nonlinear time bound} from Proposition \ref{lem:local nonlinear compactness}.

\step[Application of Proposition
\ref{lem:local nonlinear compactness}]
Since $\Omega_n=\Rd$, the domain condition in Proposition
\ref{lem:local nonlinear compactness} is automatic.
The proposition therefore gives a subsequence and a limit
$\bar\rho$ satisfying
\eqref{eq:theorem time translate local H-1} and
\eqref{eq:theorem time translate local convergence}
for every $T,L>0$ and every $1 \leq p < + \infty$.

At each fixed $s\geq0$, local $H^{-1}$ convergence and
the saturation bound imply weak convergence in $L^2(B_L)$.
Consequently, $0\leq\bar\rho_s\leq\alpha$, and
\eqref{eq:main local mass inequality} gives
\[
    \int_{B_L}\bar\rho_s\,\dx
    \leq
    \liminf_{n\to+\infty}
    \int_{B_L}\rho_s^{(n)}\,\dx
    \leq \|\rho_0\|_{L^1(\Rd)}.
\]
Letting $L\to+\infty$ proves
\eqref{eq:local omega mass inequality}.
In particular, $\bar\rho\in L^1((0,T)\times\Rd)$.

\step[Weak solution] Continuity of $\Phi$ and $\mob$, strong local convergence,
and the saturation bound give
\begin{align*}
    \Phi(\rho^{(n)})
    &\longrightarrow\Phi(\bar\rho)
    &&\text{strongly in }L^2((0,T)\times B_L),\\
    \mob(\rho^{(n)})\nabla V
    &\longrightarrow\mob(\bar\rho)\nabla V
    &&\text{strongly in }L^2((0,T)\times B_L).
\end{align*}
Estimate \eqref{eq:Caccioppoli translates space},
applied with a larger cutoff if necessary, bounds the
gradients on $B_L$. The preceding strong convergence
identifies their weak limit, so
\[
    \nabla\Phi(\rho^{(n)})
    \rightharpoonup\nabla\Phi(\bar\rho)
    \quad\text{weakly in }L^2((0,T)\times B_L).
\]
In particular,
$\Phi(\bar\rho)\in L^2(0,T;H^1_{\mathrm{loc}}(\Rd))$.

Moreover, convergence in $C([0,T];H^{-1}(B_L))$ and the previous convergences give that $\bar\rho$ is a weak solution of
\eqref{eq:saturation}.


\end{enumeratesteps}

\end{proof}

\subsection{Global compactness of time translates and the time-limit operator}\label{sec:Global convergence in time}
From now on, we need to assume the extra assumptions \ref{hyp:free energy bdd below}--\ref{hyp:growth of V}. Thanks to them we recover mass-tightness and we can upgrade to global compactness. 
First, in the following Lemma we proceed to upgrade the local convergence presented in \ref{item:local omega compactness} to global convergence. 

\begin{lemma}[Global convergence]
    \label{lem:compactness time translates}
    Assume \ref{hyp:free energy bdd below}--\ref{hyp:growth of V}, and let
    $S$ be the \goodSemigroup{} presented in 
    \ref{item:global existence theorem}. Let
    $\rho_0\in\mathcal A_{\mathcal F}$ and $t_n\to+\infty$. Then, there exists 
    a subsequence, not relabelled, and $\rho_\infty\in\mathcal A$ such that,
    for every $T>0$ and $1\leq p<+\infty$,
    \begin{equation}\label{eq:time translate global convergence}
        S_{t_n+\,\bullet}\rho_0\longrightarrow\rho_\infty
        \quad\text{strongly in }L^p((0,T)\times\Rd).
    \end{equation}
    Moreover,
    $\|\rho_\infty\|_{L^1(\Rd)}=\|\rho_0\|_{L^1(\Rd)}$ and
    $S_h\rho_\infty=\rho_\infty$ for every $h\geq0$.
\end{lemma}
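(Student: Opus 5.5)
Our plan is to combine the local compactness of \ref{item:local omega compactness} with the uniform mass tightness of Lemma~\ref{lem:uniform tightness omega} and the free-energy dissipation \eqref{eq:free energy dissipation}. Write $\rho^{(n)}_s=S_{t_n+s}\rho_0$.

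\emph{Local convergence and tightness.} By \ref{item:local omega compactness} we extract a subsequence with $\rho^{(n)}\to\bar\rho$ strongly in $L^p((0,T)\times B_L)$ for every $T,L$; a diagonal argument in $T\in\mathbb N$ makes the subsequence independent of $T$. Since $\rho_0\in\mathcal A_{\mathcal F}$, the energy is non-increasing, so $\mathcal F[\rho^{(n)}_s]\le\mathcal F[\rho_0]$, and mass is conserved by \eqref{eq:main mass conservation}. Hence the family $\{\rho^{(n)}_s\}_{n,s}$ has bounded mass and energy, and \eqref{eq:uniform tightness omega} gives $\sup_{n,s}\int_{\Rd\setminus B_L}\rho^{(n)}_s\,\dx\to0$ as $L\to\infty$. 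Fatou transfers the same tail bound to $\bar\rho_s$. Splitting $\Rd=B_L\cup(\Rd\setminus B_L)$ upgrades the $L^1$ convergence to $L^1((0,T)\times\Rd)$, and since all densities lie in $[0,\alpha]$, interpolation yields every $1\le p<\infty$. Tightness together with global convergence also shows $\|\bar\rho_s\|_{L^1}=\|\rho_0\|_{L^1}$ for a.e.\ $s$.

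\emph{Stationarity of the limit.} This is the main step. By \eqref{eq:energy coercive lower bound} the energy is bounded below by $-\|g\|_1$ along the trajectory. Consequently $\mathcal F[S_t\rho_0]$ decreases to a finite limit $\mathcal F_\infty$, and
\[
\int_0^T\!\!\int_{\Rd}\mathfrak D(\rho^{(n)}_s,J^{(n)}_s)\,\dx\,\ds\le \mathcal F[S_{t_n}\rho_0]-\mathcal F[S_{t_n+T}\rho_0]\longrightarrow0 .
\]
Because $|J|^2\le\|\mob\|_\infty\mathfrak D$, the flux $J^{(n)}=\nabla\Phi(\rho^{(n)})+\mob(\rho^{(n)})\nabla V\to0$ in $L^2((0,T)\times\Rd)$. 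In the proof of \ref{item:local omega compactness}, $J^{(n)}$ converges weakly locally to $\bar J=\nabla\Phi(\bar\rho)+\mob(\bar\rho)\nabla V$, so $\bar J=0$. The weak formulation then gives $\partial_s\bar\rho=0$ in distributions, i.e.\ $\bar\rho_s=\rho_\infty$ is constant in time; this is consistent with \eqref{eq:H-1 continuity}, which forces $\|\rho^{(n)}_{s_2}-\rho^{(n)}_{s_1}\|_{H^{-1}}\to0$. The resulting $\rho_\infty\in\mathcal A$ has full mass by the previous paragraph, which gives \eqref{eq:time translate global convergence}.

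\emph{Invariance under $S_h$.} We use the semigroup and contraction properties. For a.e.\ $s$ (along a further subsequence) $\rho^{(n)}_s\to\rho_\infty$ in $L^1(\Rd)$, by Fubini applied to the global $L^1$ convergence. Then
\[
\|S_h\rho_\infty-\rho_\infty\|_{L^1}\le\|S_h\rho_\infty-S_h\rho^{(n)}_s\|_{L^1}+\|\rho^{(n)}_{s+h}-\rho_\infty\|_{L^1},
\]
where the first term is at most $\|\rho_\infty-\rho^{(n)}_s\|_{L^1}$ by contraction. Choosing $s$ such that both $s$ and $s+h$ are good times, which is possible for a.e.\ $s$, both terms vanish, so $S_h\rho_\infty=\rho_\infty$. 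The delicate point is identifying the weak limit of the fluxes with $\bar J$ globally rather than only locally. Local identification suffices, however, because the strong $L^2$ convergence $J^{(n)}\to0$ holds on every ball.
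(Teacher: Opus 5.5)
Your proposal is correct and follows essentially the same route as the paper. Both arguments start from the local compactness of \ref{item:local omega compactness} and upgrade it to global convergence through the energy-based tail bound of Lemma~\ref{lem:uniform tightness omega}; vanishing dissipation then forces the flux, and hence $\partial_s\bar\rho$, to vanish, and contraction gives $S_h\rho_\infty=\rho_\infty$. Your Fubini-based choice of a time $s$ at which both $\rho^{(n)}_s$ and $\rho^{(n)}_{s+h}$ converge in $L^1(\Rd)$ makes explicit the choice of $s_n$ that the paper leaves implicit.
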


\begin{proof}
We consider  \ref{item:local omega compactness} as a starting point. We divide the remaining argument into two steps.

\begin{enumeratesteps}
\step[Global convergence]
By the mass preservation \eqref{eq:mass final semigroup}, the free-energy dissipation provided in Lemma \ref{prop:successive limits}, and the estimates
\eqref{eq:uniform tightness omega} and \eqref{eq:tail from V moment} apply to
the trajectory. Hence, for sufficiently large $L$,
\begin{equation}\label{eq:time translate tails}
    \sup_n\int_0^T\int_{\Rd\setminus B_L}
    \rho^{(n)}_s\,\dx\,\ds
    \leq \frac{T C(\rho_0)}{\nu_L}.
\end{equation}
Fatou's lemma gives the same bound for $\bar\rho$. Combining
\eqref{eq:time translate tails} with
\eqref{eq:theorem time translate local convergence} gives convergence in
$L^p((0,T)\times\Rd)$ for every $1 \leq p < +\infty$. It also yields
$\|\bar\rho_s\|_{L^1(\Rd)}=\|\rho_0\|_{L^1(\Rd)}$ for almost every $s$.

\step[Stationarity of the time limit]
By \eqref{eq:energy coercive lower bound}, the non-increasing function
$t\mapsto\mathcal F[S_t\rho_0]$ is bounded from below and therefore has a
finite limit. The free-energy dissipation estimate
\eqref{eq:free energy dissipation} consequently gives, for every $T>0$,
\begin{equation}\label{eq:dissipation translate vanishes}
    \int_{t_n}^{t_n+T}\int_{\Rd}
    \mob(\rho_t)|\nabla(U'(\rho_t)+V)|^2\,\dx\,\dt
    \longrightarrow0.
\end{equation}
Define
\[
    J^{(n)}\longrightarrow 0
    \quad\text{in }L^2((0,T)\times\Rd).
\]
Since the mobility $\mob$ is bounded on $[0,\alpha]$, the free-energy dissipation 
\eqref{eq:dissipation translate vanishes} implies
\[
J^{(n)}\to0$ in $L^2((0,T)\times\Rd).
\] 
On the other hand,
\eqref{eq:Caccioppoli translates space} and
\eqref{eq:theorem time translate local convergence} identify its local weak limit
as $\nabla\Phi(\bar\rho)+\mob(\bar\rho)\nabla V$. Hence this vector field
vanishes. Passing to the limit in the weak formulation gives
$\partial_s\bar\rho=0$. Thus $\bar\rho_s=\rho_\infty$ for almost every $s$,
and \eqref{eq:time translate global convergence} follows.

Fix $h>0$.
Since $\rho^{(n)}_{s_n+h}=S_h\rho^{(n)}_{s_n}$, the triangle inequality gives that 
\[
    \|S_h\rho_\infty-\rho_\infty\|_{L^1(\Rd)}
    \leq\|S_h\rho_\infty-S_h\rho^{(n)}_{s_n}\|_{L^1(\Rd)}
    +\|\rho^{(n)}_{s_n+h}-\rho_\infty\|_{L^1(\Rd)}.
\]
The $L^1$ contractive property of the \goodSemigroup{} $S$ and \eqref{eq:time translate global convergence} conclude that $S_h\rho_\infty=\rho_\infty$. 
\end{enumeratesteps}
\end{proof}


Next, we present an auxiliary lemma for the existence of a \timeLimit{}. In particular we show that the time convergence holds for all $t \to +\infty$ and not just along a subsequence $t_n \to \infty$. 


\begin{lemma}[Semigroup convergence criterion]
    \label{lem:semigroup convergence criterion}
    Assume \ref{hyp:free energy bdd below}--\ref{hyp:growth of V}, and let
    $S$ be the \goodSemigroup{} described in
    \ref{item:global existence theorem}.
    For every $\rho_0\in\mathcal A_{\mathcal F}$ there exists a stationary
    state $\rho_\infty\in\mathcal A$ such that
    \begin{equation}\label{eq:full time convergence finite energy}
        S_t\rho_0\longrightarrow\rho_\infty
        \quad\text{strongly in }L^1(\Rd)\quad\text{as }t\to+\infty.
    \end{equation}
\end{lemma}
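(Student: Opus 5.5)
The plan is to combine the subsequential compactness of Lemma~\ref{lem:compactness time translates} with the $L^1$-contraction and the stationarity of limit points. The contraction makes the distance to any fixed stationary state non-increasing along the trajectory, which upgrades subsequential convergence to convergence as $t\to+\infty$.

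First, fix $\rho_0\in\mathcal A_{\mathcal F}$ and any sequence $t_n\to+\infty$. Lemma~\ref{lem:compactness time translates} gives a subsequence and $\rho_\infty\in\mathcal A$ with $S_h\rho_\infty=\rho_\infty$ for all $h\geq0$, and with
\[
S_{t_n+\bullet}\rho_0\to\rho_\infty\quad\text{in }L^1((0,T)\times\Rd).
\]
From this space--time convergence I extract a pointwise-in-time statement. By Fubini, along a further subsequence $\|S_{t_n+s}\rho_0-\rho_\infty\|_{L^1(\Rd)}\to0$ for a.e. $s\in(0,T)$. Fix one such $s_*$ and set $\tau_n=t_n+s_*\to+\infty$, so that $S_{\tau_n}\rho_0\to\rho_\infty$ in $L^1(\Rd)$.

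Next comes the key monotonicity. Since $\rho_\infty$ is stationary, the semigroup property and the contraction in Definition~\ref{def:Free-energy dissipating semigroup} give, for $t\geq\tau_n$,
\[
\|S_t\rho_0-\rho_\infty\|_{L^1(\Rd)}=\|S_{t-\tau_n}S_{\tau_n}\rho_0-S_{t-\tau_n}\rho_\infty\|_{L^1(\Rd)}\leq\|S_{\tau_n}\rho_0-\rho_\infty\|_{L^1(\Rd)}.
\]
Hence the function $t\mapsto\|S_t\rho_0-\rho_\infty\|_{L^1}$ is non-increasing. It tends to $0$ along $\tau_n$, so it tends to $0$ as $t\to+\infty$, which is \eqref{eq:full time convergence finite energy}.

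I expect no major obstacle. The only delicate point is passing from convergence in $L^1((0,T)\times\Rd)$ to convergence at individual times, and this is handled by the Fubini extraction above. Alternatively, it follows from continuity $t\mapsto S_t\rho_0\in L^1$ together with contraction: $\|S_{t_n+s}\rho_0-\rho_\infty\|$ is non-increasing in $s$ by the same argument, so its time average over $(0,T)$ controls its value at $s=T$.

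By uniqueness of limits, $\rho_\infty$ does not depend on the chosen sequence. Its mass equals $\|\rho_0\|_{L^1}$ by Lemma~\ref{lem:compactness time translates}.
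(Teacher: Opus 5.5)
Your proposal is correct and follows essentially the same route as the paper. From the space--time convergence in Lemma~\ref{lem:compactness time translates} you extract times $\tau_n\to+\infty$ with $S_{\tau_n}\rho_0\to\rho_\infty$ in $L^1(\Rd)$; the paper picks $s_n\in(0,1)$ directly, whereas you fix one good $s_*$ via Fubini. Then stationarity of $\rho_\infty$ together with the $L^1$-contraction makes $t\mapsto\|S_t\rho_0-\rho_\infty\|_{L^1(\Rd)}$ non-increasing, which upgrades subsequential convergence to convergence as $t\to+\infty$.
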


\begin{proof}
    Apply Lemma \ref{lem:compactness time translates} to any sequence
    $t_n\to+\infty$.  From
    \eqref{eq:time translate global convergence} with $T=1$, choose
    $s_n\in(0,1)$ such that
    $S_{t_n+s_n}\rho_0\to\rho_\infty$ in $L^1(\Rd)$.  Since
    $S_h\rho_\infty=\rho_\infty$, the function
    \[
        t\longmapsto\|S_t\rho_0-\rho_\infty\|_{L^1(\Rd)} = \|S_t\rho_0-S_t\rho_\infty\|_{L^1(\Rd)} \leq \|\rho_0-\rho_\infty\|_{L^1(\Rd)}
    \]
    is non-increasing.  Its value therefore tends to zero along the full
    time variable, which proves \eqref{eq:full time convergence finite energy}.
\end{proof}

We can now show the main result of this subsection. 

\begin{proof}[Proof of \ref{item:Time limit operator}]
    The class $\mathcal A_{\mathcal F}$ is dense in $\mathcal A$ in
    $L^1(\Rd)$, as follows from spatial truncation and the finite-energy
    approximation used in the construction of the semigroup.  Let
    $\rho_0\in\mathcal A$ and choose
    $\rho_0^k\in\mathcal A_{\mathcal F}$ with
    $\rho_0^k\to\rho_0$ in $L^1(\Rd)$.  By Lemma
    \ref{lem:semigroup convergence criterion}, the limits
    \[
        \rho_\infty^k=\lim_{t\to+\infty}S_t\rho_0^k
    \]
    exist.  Passing to the limit as $t\to+\infty$ in the contraction
    estimate gives
    \[
        \|\rho_\infty^k-\rho_\infty^j\|_{L^1(\Rd)}
        \leq\|\rho_0^k-\rho_0^j\|_{L^1(\Rd)}.
    \]
    Hence $\rho_\infty^k$ converges in $L^1(\Rd)$ to a limit denoted by
    $S_\infty\rho_0$.  For every $k$ and every $t\geq0$,
    \begin{align*}
        \|S_t\rho_0-S_\infty\rho_0\|_{L^1(\Rd)}
        &\leq \|\rho_0-\rho_0^k\|_{L^1(\Rd)}
        +\|S_t\rho_0^k-\rho_\infty^k\|_{L^1(\Rd)} +\|\rho_\infty^k-S_\infty\rho_0\|_{L^1(\Rd)}.
    \end{align*}
    Letting first $t\to+\infty$ and then $k\to+\infty$ proves convergence
    for every datum in $\mathcal A$.

    The contraction property of $S_\infty$ follows by passing to the limit in
    the contraction estimate for $S_t$.  Moreover, for $h\geq0$,
    \[
        S_hS_\infty\rho_0
        =\lim_{t\to+\infty}S_hS_t\rho_0
        =\lim_{t\to+\infty}S_{t+h}\rho_0
        =S_\infty\rho_0,
    \]
    where the first equality follows from the $L^1$-continuity of $S_h$.
    The contraction property and this stationarity give, for every $T>0$,
    \[
        \sup_{0\leq s\leq T}
        \|S_{t+s}\rho_0-S_\infty\rho_0\|_{L^1(\Rd)}
        \leq\|S_t\rho_0-S_\infty\rho_0\|_{L^1(\Rd)}
        \longrightarrow0.
    \]
    This convergence can be upgraded to any $L^p$ space for $1 \leq p < \infty$ thanks to the preservation of mass and the $L^\infty$ bound of $S_t\rho_0$. Moreover,
    \eqref{eq:main mass conservation} and the global $L^1$ convergence imply
    \[
        \|S_\infty\rho_0\|_{L^1(\Rd)}
        =\|\rho_0\|_{L^1(\Rd)},
    \]
    so no mass is lost in the limit.
    Since the trajectory issued from $S_\infty\rho_0$ is constant and every
    semigroup trajectory is a weak solution, $S_\infty\rho_0$ is a
    constant-in-time weak solution.  Thus $S_\infty$ satisfies all the
    requirements in Definition \ref{def:timeLimit}.
\end{proof}

As we point out in the next remark, one of the main advantages of the confinement provided by \ref{hyp:free energy bdd below}--\ref{hyp:growth of V} is that the mass of the evolution system is preserved, since the strength of the drift potential $V$ is enough to prevent the mass from \textit{escaping to infinity}.

\begin{remark}[No mass escaping]


The global $L^1(\Rd)$ convergence obtained above already implies
preservation of mass by the time-limit operator. We record the corresponding
tail estimate, which makes the role of confinement explicit. Let
$\mathcal B\subset\mathcal A_{\mathcal F}$ be a family with uniformly
bounded masses and free energies. By
\eqref{eq:energy coercive lower bound}, there exists $C_{\mathcal B}>0$
such that
\begin{equation}\label{eq:time limit uniform moment}
    \sup_{\rho_0\in\mathcal B}\sup_{t\geq0}
    \int_{\Rd}V S_t\rho_0\,\dx
    \leq C_{\mathcal B}.
\end{equation}
Set $\nu_R=\inf_{\Rd\setminus B_R}V$, which is positive for sufficiently large $R$. Combining
\eqref{eq:time limit uniform moment} with
\eqref{eq:coercive potential} gives
\begin{equation}\label{eq:time limit mass tail}
    \sup_{\rho_0\in\mathcal B}\sup_{t\geq0}
    \int_{\Rd\setminus B_R}S_t\rho_0\,\dx
    \leq \frac{C_{\mathcal B}}{\nu_R}
    \longrightarrow0
    \qquad\text{as }R\to+\infty.
\end{equation}
The convergence $S_t\rho_0\to S_\infty\rho_0$ in $L^1(\Rd)$ and
\eqref{eq:time limit mass tail} yield the same tail bound for
$S_\infty\rho_0$. Consequently, using
\eqref{eq:main mass conservation},
\[
    \|\rho_0\|_{L^1(\Rd)}
      -\int_{B_R}S_\infty\rho_0\,\dx
    =\int_{\Rd\setminus B_R}S_\infty\rho_0\,\dx
    \leq\frac{C_{\mathcal B}}{\nu_R}
    \longrightarrow0.
\]
Thus the time limit has the same mass as the initial datum; in particular,
probability densities remain probability densities. For arbitrary
$\rho_0\in\mathcal A$, the same conclusion follows from the finite-energy
approximation and contraction argument in the proof of \ref{item:Time limit operator}.
\end{remark}

\subsection{The energy minimiser}\label{sec:Energy Minimiser}

We study the properties of the minimiser, i.e. \ref{item:Semi-group local minimiser} and \ref{item:minimiser free energy}. First, we show that the $L^1$-local minimiser is stationary with respect to the semigroup. 

\begin{proof}[Proof of \ref{item:Semi-group local minimiser}]
Let $\widehat\rho\in\mathcal A_{\mathcal F}\cap\mathcal A_M$ be an
$L^1$-local minimiser of $\mathcal F$ over $\mathcal A_M$. There exists
$\delta>0$ such that
\begin{equation}\label{eq:local minimiser neighbourhood}
    \mathcal F[\widehat\rho]\leq\mathcal F[\rho]
    \quad\text{whenever }\rho\in\mathcal A_M
    \text{ and }
    \|\rho-\widehat\rho\|_{L^1(\Rd)}<\delta.
\end{equation}
By the strong continuity of the semigroup at $t=0$, there exists
$\tau>0$ such that
\[
    \|S_t\widehat\rho-\widehat\rho\|_{L^1(\Rd)}<\delta,
    \qquad 0\leq t\leq\tau.
\]
Mass conservation gives $S_t\widehat\rho\in\mathcal A_M$. Therefore,
\eqref{eq:local minimiser neighbourhood} and the free-energy dissipation
inequality imply
\[
    \mathcal F[\widehat\rho]
    \leq\mathcal F[S_t\widehat\rho]
    \leq\mathcal F[\widehat\rho],
    \qquad 0\leq t\leq\tau.
\]
Writing $\rho_t=S_t\widehat\rho$, we consequently obtain
\begin{equation*}
    \int_0^\tau\int_\Rd
       \mob(\rho_t)
       \left|\nabla\bigl(U'(\rho_t)+V\bigr)\right|^2
       \,\dx\,\dt=0.
\end{equation*}
The associated flux
\[
    J_t=\mob(\rho_t)\nabla\bigl(U'(\rho_t)+V\bigr)
\]
therefore vanishes almost everywhere on $(0,\tau)\times\Rd$. Indeed,
since $\mob$ is bounded on $[0,\alpha]$,
\[
    |J_t|^2
    \leq \|\mob\|_{L^\infty(0,\alpha)}\,
       \mob(\rho_t)
       \left|\nabla\bigl(U'(\rho_t)+V\bigr)\right|^2.
\]
The weak formulation then yields $\partial_t\rho=0$ in
$\mathcal D'((0,\tau)\times\Rd)$.
Since the trajectory is continuous in $L^1(\Rd)$ and
$S_0\widehat\rho=\widehat\rho$, we conclude that
\[
    S_t\widehat\rho=\widehat\rho,
    \qquad 0\leq t\leq\tau.
\]
Finally, the semigroup identity extends this equality to every $t\geq0$.
This proves item \ref{item:Semi-group local minimiser}.
\end{proof}

Under the local strict convexity of the diffusion potential $U$, we show that $\widehat{\rho}$ is the unique $L^1$-local and global minimiser of the free energy over the set of initial data $\mathcal{A}$ with fixed mass $M$.

\begin{proof}[Proof of \ref{item:minimiser free energy}] We divide the proof in several steps. 

\begin{enumeratesteps}
\step[Construction of the candidate]
Put $\Upsilon (s) = \min \{\alpha ,(U')^{-1}(s)\}_+$, where we stress $\Upsilon (z)=0$ for
$z \leq U'(0^+)$ and $\Upsilon (z)=\alpha$ for $z\geq U'(\alpha^-)$.
The endpoint derivatives are understood as extended one-sided limits.
Strict convexity makes $\Upsilon $ continuous and nondecreasing. Consider
$\rho_C(x)=\Upsilon (C-V(x))$. Then $\Upsilon (z)$ is the unique minimiser of $s\mapsto U(s)-zs$ over $[0,\alpha]$. Comparison
with $s=0$ therefore gives
\[
 U(\rho_C)+V\rho_C\leq C\rho_C.
\]
Together with \eqref{eq:negative internal energy confinement}, this
implies
\begin{equation}\label{eq:candidate confinement bound}
 \bigl((1-\vartheta)V-C\bigr)\rho_C\leq g.
\end{equation}
Fix $C_0\in\mathbb R$. For all $C\leq C_0$, outside the bounded set
$\{V\leq(\max\{C_0,0\}+1)/(1-\vartheta)\}$, the coefficient on the
left of \eqref{eq:candidate confinement bound} is at least one.
Thus $\rho_C\leq g$ there, while $\rho_C\leq\alpha$ inside this set.
Hence
\[
 Q(C)=\int_{\Rd}\Upsilon (C-V(x))\,\dx
\]
is finite for every finite $C$ and is continuous by dominated
convergence. It is nondecreasing, tends to zero as $C\to-\infty$ by
dominated convergence, and tends to infinity as $C\to+\infty$ by
monotone convergence. Thus there exists a constant $C$ satisfying $Q(C)=M$.

This constant is unique. If $C_1<C_2$ gave the same finite positive
mass, monotonicity would imply $\rho_{C_1}=\rho_{C_2}$ almost everywhere
and, by continuity, everywhere. Equality at an interior density is
impossible since $U'$ is strictly increasing. The common profile would
therefore be a continuous function on the connected space $\Rd$ taking
values only in $\{0,\alpha\}$. It would be constant, contradicting its
finite positive mass.

With this unique constant, define
\begin{equation*}
 \widehat\rho(x)=\Upsilon (C-V(x)).
\end{equation*}
Equation \eqref{eq:candidate confinement bound} gives
\[
 (1-\vartheta)V\widehat\rho\leq C_+\widehat\rho+g.
\]
It follows that $V\widehat\rho\in L^1(\Rd)$. From
\eqref{eq:positive energy linear control} and
\eqref{eq:negative internal energy confinement} we get
$U(\widehat\rho)\in L^1(\Rd)$, so that
$\widehat\rho\in\mathcal A_{\mathcal F}$. 
Set $f(s)=U(s)$. The pointwise minimising property is equivalently
\begin{equation}\label{eq:minimiser pointwise variational inequality}
 f(s)+V(x)s-f(\widehat\rho(x))-V(x)\widehat\rho(x)
 \geq C\bigl(s-\widehat\rho(x)\bigr),
 \qquad 0\leq s\leq\alpha.
\end{equation}

\step[Global and local minimality]
Let $\rho\in\mathcal A_M\cap\mathcal A_{\mathcal F}$. Substituting
$s=\rho(x)$ in
\eqref{eq:minimiser pointwise variational inequality}, integrating over
$\Rd$, and using
$\|\rho\|_{L^1}=\|\widehat\rho\|_{L^1}=M$, we obtain
\begin{equation*}
    \mathcal F[\rho]-\mathcal F[\widehat\rho]
       \geq C\int_{\Rd}(\rho-\widehat\rho)\,\dx=0.
\end{equation*}
Thus $\widehat\rho$ is a global, and therefore local, minimiser. Densities
outside $\mathcal A_{\mathcal F}$ have infinite energy and
do not alter this conclusion.

The admissible set $\mathcal A_M$ is convex. Since $U$ is strictly convex, so is 
$\mathcal F$ on this set; the potential-energy term is
linear. Hence the global minimiser is unique.

The truncation and bounded mass correction in the proof of Lemma
\ref{lem:energy expanding balls} also give $L^1$ approximation of every
$\rho\in\mathcal A_M$ by densities of the same mass with bounded
support. Each approximant has finite energy, since $U$ is continuous
on $[0,\alpha]$ and $V$ is locally bounded. Thus a density of infinite
energy cannot be an
$L^1$-local minimiser. It remains to exclude any other finite-energy
$L^1$-local minimiser. Let
$\rho_*\in\mathcal A_M\cap\mathcal A_{\mathcal F}$ be an $L^1$-local
minimiser and suppose that $\rho_*\neq\widehat\rho$. For
$0<\lambda<1$, set
\[
    \rho_\lambda=(1-\lambda)\rho_*+\lambda\widehat\rho.
\]
Then $\rho_\lambda\in\mathcal A_M$ and
$\rho_\lambda\to\rho_*$ in $L^1(\Rd)$ as $\lambda\to0^+$. Since
$\widehat\rho$ is the unique global minimiser, strict convexity gives
\[
    \mathcal F[\rho_\lambda]
    <(1-\lambda)\mathcal F[\rho_*]
      +\lambda\mathcal F[\widehat\rho]
    \leq\mathcal F[\rho_*],
\]
contradicting the local minimality of $\rho_*$ for sufficiently small
$\lambda$. Thus every $L^1$-local minimiser coincides with
$\widehat\rho$. This proves item \ref{item:minimiser free energy} and
completes the proof of Theorem \ref{thm:Asymptotic (P0)}.
\end{enumeratesteps}
\end{proof}

\section{Numerical experiments}\label{sec:Numerical experiments}

\renewcommand{\topfraction}{0.9}
\renewcommand{\bottomfraction}{0.8}
\renewcommand{\textfraction}{0.05}

We give some simple numerical examples that illustrate the behaviour of problem \eqref{eq:saturation}. For these experiments, we will use the implementation from \cite{Carrillo_FJ_Gomez-Castro25} over $\R$ instead of a bounded interval with no-flux boundary conditions. This implementation corresponds to an implicit finite volume scheme introduced for this problem in \cite{BCH23}. We use the \texttt{julia} language \cite{Bezanson_Edelman_Karpinski_Shah17} and the \texttt{NLSolve.jl} package (see \cite{NLSolver}). We want to emphasise the difference of qualitative behaviours of the problem \eqref{eq:saturation} regarding whether the mass tightness assumptions \ref{hyp:free energy bdd below}--\ref{hyp:growth of V} are fulfilled or not. Depending on the choice we make the mass is \textit{kept} or \textit{lost at infinity}.

\paragraph{Quadratic potential.} We exemplify the behaviour for the nonlinear diffusion $U(\rho)=\rho^2$ with a coercive potential $V(x) = 10 x^2$. This shows how under the extra assumptions \ref{hyp:free energy bdd below}--\ref{hyp:growth of V} the mass is preserved and it produces free boundaries at levels $\rho = 0 , \alpha$, see Figure \ref{fig:convex V}.


\begin{figure}[pos=htbp]
    \centering 
    \includegraphics[width=.79\textwidth]{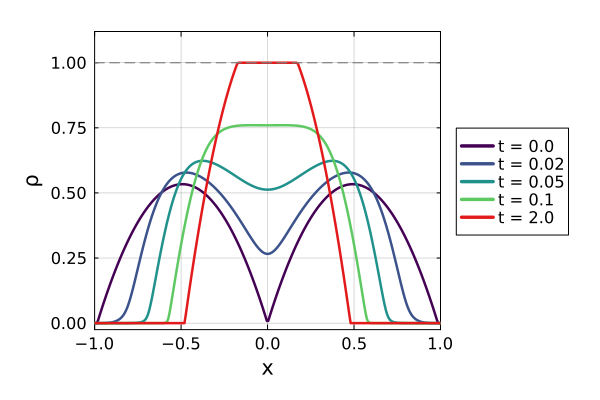}
    \caption{$\mob(\rho) = \rho(1-\rho)$, $U(\rho) = \rho^2$ and $V(x) = 10x^2$. $\Delta t = \Delta x = 2^{-7}$.}
    \label{fig:convex V}
\end{figure}

\paragraph{Partial escape of mass at infinity.} We take again the nonlinear diffusion potential $U(\rho) = \rho^2$. However, in this case the potential $V$ is not coercive anymore since it is chosen such that $\inf_{\R \backslash (-\sigma, \sigma)} V(x) \rightarrow 0$ as $\sigma \to +\infty$ (see Figure \ref{fig:Mass loss} above). Due to the lack of the mass-tightness assumptions \ref{hyp:free energy bdd below}--\ref{hyp:growth of V}, we observe how a significant part of the mass is \textit{lost at infinity}, see Figure \ref{fig:Mass loss}.

\begin{figure}[pos=htbp]
    \centering 
    \includegraphics[width=.79\textwidth]{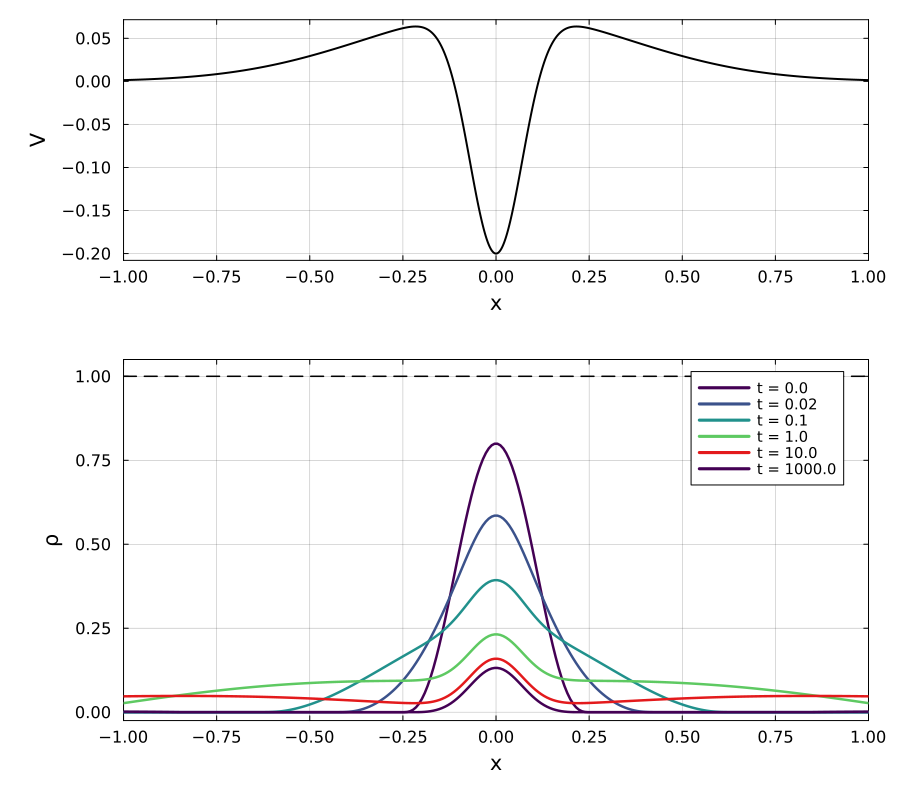}
    \caption{$\mob(\rho) = \rho(1-\rho)$, $U(\rho) = \rho^2$. $\Delta t = 0.05$, $\Delta x = 2^{-7}$.}
    \label{fig:Mass loss}
\end{figure}

\FloatBarrier

\section*{Acknowledgements}
AFJ is thankful to Bruno Volzone (Politecnico di Milano) for fruitful discussions at an early stage of the project.

\bibliographystyle{main18-nourl}

\bibliography{bibliography}


\end{document}